\documentclass[11pt]{article}

\usepackage[T1]{fontenc}
\usepackage{lmodern}
\usepackage{microtype}
\usepackage[a4paper,margin=29mm]{geometry}
\usepackage{amsmath,amssymb,amsthm,mathtools,mathrsfs}
\usepackage{array}
\usepackage{enumitem}
\usepackage{needspace}
\usepackage[hidelinks]{hyperref}
\usepackage[nameinlink,capitalise,noabbrev]{cleveref}
\setlist{itemsep=3pt, topsep=5pt}

\newtheorem{theorem}{Theorem}[section]
\newtheorem{proposition}[theorem]{Proposition}
\newtheorem{lemma}[theorem]{Lemma}

\theoremstyle{definition}
\newtheorem{definition}[theorem]{Definition}

\newcommand{\Cp}{\mathbb C_p}
\newcommand{\Qp}{\mathbb Q_p}
\newcommand{\Fpbar}{\overline{\mathbb F}_p}
\newcommand{\PP}{\mathbf P}

\newcommand{\PGL}{\operatorname{PGL}}
\newcommand{\GL}{\operatorname{GL}}
\newcommand{\red}{\operatorname{red}}
\newcommand{\an}{\mathrm{an}}
\newcommand{\Ber}{\mathrm{Ber}}

\title{A Rigid Analytic Curve Can Have a Dense \(p\)-adic Orbit}
\author{Yu Fu}
\date{}

\begin{document}
\maketitle

\begin{abstract}
Let $\mathcal D$ be a product of Drinfeld upper half spaces and let $J$ be the corresponding product of projective linear groups acting on $\mathcal D$.  We prove that if $Y$ is a reduced, quasi-compact separated strict $\mathbb C_p$-analytic space and $f: Y\to\mathcal D$ is analytic with $\dim Y<\dim\mathcal D$, then \(J f(Y(\mathbb C_p))\) is closed and nowhere dense in $\mathcal D(\mathbb C_p)$.

The quasi-compactness assumption is necessary even when the source is a strict rigid analytic curve, while a quasi-compact perfectoid source can still have an orbit closure with nonempty interior. For every finite extension $F/\mathbb Q_p$ and $d\geq3$, we construct a closed smooth irreducible non-quasi-compact curve in $\Omega_F^{d-1}$ with dense orbit under a fixed diagonal one-parameter subgroup.
We also construct a $\mathbb Q_p$-defined analytic map from the open unit disc to $\mathbf P^2$ with dense $\PGL_3(\mathbb Q_p)$-orbit, and an analytic map from a one-dimensional affinoid perfectoid torus
to $\mathbf P^2$ whose $\PGL_3(\mathbb Q_p)$-orbit closure has nonempty interior.
\end{abstract}

\medskip
\noindent\textit{Keywords.} $p$-adic orbit closure, rigid analytic curve,
perfectoid space, Drinfeld upper half space, quasi-Stein interpolation.

\noindent\textit{2020 Mathematics Subject Classification.}
Primary 14G22, 11G18; Secondary 14G35, 14G45.

\setcounter{tocdepth}{1}
\tableofcontents

\section{Introduction}
Hecke correspondences on a Shimura variety send algebraic subvarieties to algebraic subvarieties. Extending the usual definition for points \cite[\S2.4]{ChaiHeckeOrbits}, we define the Hecke orbit of an algebraic subvariety to be the union of all its images under Hecke correspondences. A folklore conjecture predicts that the $p$-adic closure of the Hecke orbit of a positive-dimensional proper algebraic subvariety should be nowhere dense in the $p$-adic topology on
$\Cp$-valued points; see \cite[Conjecture~1.3]{FuNowhereDensity}.
In this paper we ask whether an analogous assertion holds for rigid analytic maps and actions of local $p$-adic groups.

Let $G$ be a locally compact $p$-adic group acting on a $\Cp$-analytic space $X$, and let $f:Y\to X$ be an analytic map of $\Cp$-analytic spaces with $\dim Y<\dim X$. The $G$-orbit of the image of $f$ is \[ G f(Y(\Cp))=\bigcup_{g\in G}g f(Y(\Cp)).\]
We ask whether this orbit is nowhere dense in $X(\Cp)$. This is equivalent to whether its classical closure \(\overline{G f(Y(\Cp))}^{\,\mathrm{cl}}\) has empty interior in $X(\Cp)$.

The dimension inequality does not by itself imply nowhere density.
Although each translate is the image of a lower-dimensional analytic space, the union need not be an analytic subspace. Indeed, even the orbit of a projective algebraic curve need not be closed, as the example in \Cref{sec:conventions} shows. The question is therefore whether additional geometric hypotheses can prevent a lower-dimensional analytic family from acquiring interior after saturation by a \(p\)-adic group.

\medskip

Fix an embedding into $\Cp$ for every finite extension of
$\mathbb Q_p$ used below. For such an extension $F/\Qp$ and $d\geq2$, let
\[
 \Omega_F^{d-1}
 =\PP_{\Cp}^{d-1}\setminus
   \bigcup_{H\in(\PP_F^{d-1})^\vee(F)}H_{\Cp}
\]
be Drinfeld's upper half space.

For finite extensions $F_i/\Qp$ and integers $d_i\geq2$, the product \(\mathcal D=\prod_{i=1}^r\Omega_{F_i}^{d_i-1}\) carries an action of \(J=\prod_{i=1}^r\PGL_{d_i}(F_i)\) where each factor of $J$ acts on the corresponding factor of $\mathcal D$.
Our nowhere density result applies to a quasi-compact source which is a rigid space admitting a finite admissible covering by affinoid domains.

\begin{theorem}\label{thm:intro-qc}
Let $Y$ be a reduced quasi-compact separated strict $\Cp$-analytic space, and let $f:Y\to\mathcal D$ be an analytic map. Suppose \(\dim Y<\dim\mathcal D.\) Then $J f(Y(\Cp))$ is closed and nowhere dense in $\mathcal D(\Cp)$.
\end{theorem}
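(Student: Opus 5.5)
The plan is to use the Bruhat--Tits building to reduce the orbit to a finite union of analytic images. For each factor there is a $\PGL_{d_i}(F_i)$-equivariant reduction map $\tau_i:\Omega_{F_i}^{d_i-1}\to\mathcal B_i$ to the geometric realization of the Bruhat--Tits building. Fix a finite union of closed simplices $K_i\subset\mathcal B_i$ that contains a fundamental domain for the action of $\PGL_{d_i}(F_i)$. Then $U_i=\tau_i^{-1}(K_i)$ is an affinoid domain of $\Omega_{F_i}^{d_i-1}$. Moreover, the set $\{g\in\PGL_{d_i}(F_i): gU_i\cap U_i\neq\emptyset\}$ is compact, because the stabilizers of simplices are compact open and the action on $\mathcal B_i$ is proper. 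Put $U=\prod U_i$, an affinoid in $\mathcal D$ with $JU=\mathcal D$.

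\textbf{Step 1: the image meets only finitely many simplices.} Since $Y$ is quasi-compact, it is a finite union of affinoids $V$. For each $V$, the image $\tau\circ f(V)$ is a compact subset of $\prod\mathcal B_i$. This follows by passing to Berkovich spaces: $f^{\an}(V^{\Ber})$ is compact and $\tau$ extends continuously to the Berkovich space. Hence $\tau f(Y)$ meets only finitely many simplices. It follows that the set $S=\{g\in J: gf(Y)\cap U\neq\emptyset\}$ is contained in a compact subset $C\subset J$.

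\textbf{Step 2: closedness.} Fix $x\in\mathcal D(\Cp)$ lying in the closure of $Jf(Y(\Cp))$. Translate so that $x\in U$, and replace $U$ by a slightly larger affinoid neighbourhood $U'$ of the same type. Every translate $gf(Y)$ meeting $U'$ then has $g\in C'$ with $C'$ compact, so near $x$ the orbit equals $C'f(Y(\Cp))\cap U'$. The action map $C'\times Y(\Cp)\to\mathcal D(\Cp)$ is continuous. Its domain is not compact, since $Y(\Cp)$ is not, so I would argue instead with the compact group $C'$ and the finitely many affinoid pieces $V$. Take $g_n\in C'$ and $y_n\in Y$ with $g_nf(y_n)\to x$, and pass to a subsequence with $g_n\to g$. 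Then $f(y_n)\to g^{-1}x$. Since $f(V)$ is closed in $\mathcal D(\Cp)$ (the image of an affinoid under an analytic map to a separated space has closed classical points: use Berkovich compactness plus the fact that a classical point of the closure is attained, by Gruson/maximum-modulus type arguments, or by Kiehl's proper mapping theorem after factoring through a closed affinoid), we get $g^{-1}x\in f(Y)$. Hence the orbit is closed.

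\textbf{Step 3: nowhere density.} The orbit is closed and locally equals $C'f(Y)$. Cover $C'$ by finitely many cosets $hK$, where $K\subset J$ is a small compact open subgroup that is an analytic ($p$-adic Lie) group of dimension $N=\dim J$. Suppose, for contradiction, that the orbit had interior. By a Baire argument over a countable dense set of cosets (equivalently, over a countable basis of $K$), some $Kf(V)$ contains an open set, where $V$ is an affinoid piece. Here I would use that $Kf(V)$ is a countable union of the closed sets $\{k\}f(V)$ only if $K$ were countable, which it is not. So instead I use the orbit map $K\times V\to\mathcal D$ and a dimension estimate. Near the identity, $k=\exp(X)$ with $X\in\mathfrak g(\mathcal O_F)$ in a small lattice. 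The map $(X,v)\mapsto \exp(X)f(v)$ is the restriction to $\mathcal O_F$-points of a rigid analytic map $\Phi:\mathbb B^N\times V\to\mathcal D$. The union over $k\in K$ is the image of $\mathcal O_F^N\times V(\Cp)$. This set has interior only if the rigid analytic map $\Phi$ is generically submersive in a way that compensates for the totally disconnected directions. The key point is that $\mathcal O_F^N$ is nowhere dense in $\mathbb B^N(\Cp)$, yet its image could still be large, so a finer argument is needed. I would show that a fibrewise image $\Phi(\mathcal O_F^N\times V)$ is a compact, hence countable-dimensional, family of analytic sets of dimension $\dim V<\dim\mathcal D$. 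Each such set has empty interior. Interior of the union would contradict a measure/Hausdorff-dimension estimate: classical points of $\Cp$-analytic sets of smaller dimension, taken over finite-level residue fields, have box-counting growth $q^{m\dim V}$ at level $m$, while $\mathcal O_F^N$ contributes only a bounded factor per level because $K$-orbits of residue disks are finite. Concretely, I reduce modulo increasing levels $\varpi^m$ of a formal model: at each level, the number of residue polydiscs of $U$ hit by $K f(V)$ is $O(|K/K_m|\cdot q^{m\dim V}\cdot c)$. This count should be compared with the total $q^{m\dim\mathcal D}$. I expect this counting step to be the main obstacle. The difficulty is that $\Cp$ has infinite residue field, so I would instead work with the affinoid defined over a finite extension and count over $\Fpbar$-points of the reductions of dimension $\dim V$ versus $\dim \mathcal D$, using that $K_m$ acts trivially on level-$m$ reduction.
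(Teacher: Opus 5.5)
Your Steps 1 and 2 match the paper. There, the transporter set $\{g\in J: gf(Y^{\Ber})\cap U\neq\varnothing\}$ is shown to be compact via the equivariant retraction to the building. Closedness then follows from compactness of $f(V^{\Ber})$ in the Hausdorff space $\mathcal D^{\Ber}$, together with the rigid Nullstellensatz on the fibre over a classical point.

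The genuine gap is Step~3. You leave it unfinished, and the route you sketch cannot work as stated. Over $\Cp$ there is nothing to count: the residue field $\Fpbar$ is infinite, so every level has infinitely many residue polydiscs. Moreover, an arbitrary $\Cp$-analytic $f$ need not descend to any finite extension of $\Qp$ where counting would make sense. Even with a finite residue field, comparing $|K/K_m|\cdot q^{m\dim V}$ with $q^{m\dim\mathcal D}$ gives no contradiction. Already for one factor $\PGL_d(F)$, with $q$ the residue cardinality of $F$, the index $|K/K_m|$ alone has order $q^{m(d^2-1)}$, while $\dim\mathcal D=d-1$.

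The missing idea is that a single level of reduction suffices, and the infinitude of $\Fpbar$ is exactly what makes it work. Suppose the orbit had interior. Since the orbit is locally $Pf(Y(\Cp))$ with $P\subset J$ compact, there is a strict closed polydisc $B$ with $B(\Cp)\subset Pf(Y(\Cp))$. The reduction map $\red_B:B(\Cp)\to\mathbf A^n(\Fpbar)$, with $n=\dim\mathcal D$, is surjective.

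The set $\{(g,y)\in P\times Y^{\Ber}:gf(y)\in B^{\Ber}\}$ is compact. Hence it is covered by finitely many products $V_\nu\times U_\nu^{\Ber}$, where $U_\nu$ is affinoid and $gf(U_\nu)$ lies in a fixed chart on which the rescaled coordinates $u_j$ of $B$ are defined. The map $g\mapsto u_j\circ gf$ is continuous into $\mathcal O(U_\nu)$ for the supremum norm. So there is a compact open subgroup $H\subset J$ with $|u_j(gf(y))-u_j(g'f(y))|<1$ for all $y\in U_\nu$, whenever $g,g'\in V_\nu$ lie in one coset of $H$.

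Refine the cover so each piece lies in one coset, and fix a representative $g_\nu$ for each piece. The ultrametric inequality shows that $gf(y)\in B$ forces $g_\nu f(y)\in B$ and $\red_B(gf(y))=\red_B(g_\nu f(y))$. Thus $\mathbf A^n(\Fpbar)$ would be covered by the images of the canonical reductions of the finitely many affinoids $U_\nu\cap(g_\nu f)^{-1}(B)$. Each of these has dimension at most $\dim Y<n$, so each image lies in a proper Zariski closed subset, which is impossible.

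Your final sentence gestures at this. But Step~3 is not yet a proof without three ingredients: the target polydisc $B$, the coset argument that is uniform in $y$, and the dimension bound for canonical reductions.
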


We show that the quasi-compactness assumption cannot be omitted, even for a single Drinfeld upper half space. 

\begin{theorem}\label{thm: Drinfeld example}
Let $F/\Qp$ be a finite extension and let $d\geq3$. Define \(\lambda:F^\times\to\PGL_d(F)\) by \(\lambda(q)=[\operatorname{diag}(q^{d-1},q^{d-2},\ldots,q,1)]\).
There is a closed smooth connected irreducible non-quasi-compact analytic curve $A\subset\Omega_F^{d-1}$ such that
\(\overline{\lambda(F^\times)A(\Cp)}^{\,\mathrm{cl}}
=\Omega_F^{d-1}(\Cp)\).
\end{theorem}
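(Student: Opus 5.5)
We construct $A$ as the image of a closed embedding of a quasi-Stein curve, namely an open disc or a punctured line, into $\Omega_F^{d-1}$. It is built by quasi-Stein interpolation so that it passes, up to the $\lambda$-action, near every point of a countable dense subset of $\Omega_F^{d-1}(\Cp)$. The key input is the dynamics of $\lambda$. In affine coordinates $[x_1:\dots:x_{d-1}:1]$ the element $\lambda(q)$ acts by $x_i\mapsto q^{d-i}x_i$. Taking $q=\varpi^{-n}$ with $n\to\infty$ expands the coordinates at different rates, so $\lambda(\varpi^{-n})^{-1}$ contracts points towards the fixed point $[0:\dots:0:1]$ along a flag. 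Because $\Omega_F^{d-1}$ avoids all $F$-rational hyperplanes, this fixed point lies on the boundary. The translates $\lambda(\varpi^{n})U$ of a small open $U\subset\Omega$ therefore accumulate towards the boundary of $\Omega$ and leave every affinoid of the standard exhaustion $\Omega=\bigcup_m\Omega_m$, where $\Omega_m$ is the complement of the $p^{-m}$-tubes around $F$-rational hyperplanes of bounded height. This gives room to place infinitely many constraints on a non-quasi-compact curve, one in each shell $\Omega_{m+1}\setminus\Omega_m$.

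The steps are as follows.

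\begin{enumerate}
\item Choose a countable basis $\{U_k\}$ of open polydisc neighbourhoods in $\Omega(\Cp)$, each centred at a point $z_k$ with algebraic coordinates. This works because $\Omega(\Cp)$ is separable in the relevant sense once we use centres in $\overline{\mathbb Q}_p$. Density of $\lambda(F^\times)A$ then only requires $A\cap\lambda(q_k)^{-1}U_k\neq\emptyset$ for some $q_k$.
\item For each $k$ pick $n_k$ so large that $w_k:=\lambda(\varpi^{n_k})z_k$ and a polydisc around it lie in a shell $\Omega_{m_k+1}\setminus\Omega_{m_k}$, with $m_k\to\infty$ strictly increasing. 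This uses the computation that the distance to the nearest $F$-rational hyperplane tends to $0$ under the contraction while $w_k$ stays in $\Omega$.
\item Build a map $\phi:\mathbb D\to\PP^{d-1}$ from the open unit disc, taking points $t_k$ with $|t_k|\to1$ to $w_k$. Make $\phi$ a closed immersion into $\Omega$ by requiring $\phi(\{|t|\le r_m\})\subset\Omega_{m}$ and $\phi(\{|t|\ge r_m\})\cap\Omega_{m-1}=\emptyset$ for a suitable sequence $r_m\to1$. Properness of $\phi$ into $\Omega$ then follows from the Kiehl-type criterion for quasi-Stein spaces.
\item Take $A=\phi(\mathbb D)$. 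It is irreducible, connected and smooth, being the image of an injective immersion of a smooth connected curve that is proper onto its image. It is non-quasi-compact because it meets infinitely many shells.
\end{enumerate}

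We construct $\phi$ inductively in step 3. Its coordinates are power series $\phi=[g_1:\dots:g_{d-1}:1]$, chosen so that the vector $(g_i)$ stays away from every $F$-rational affine hyperplane $a_0+\sum a_ig_i=0$. We achieve this with a one-variable "spiral" adapted to the contraction rates. For example, take $g_i(t)=\sum_j c_{ij}t^{e_j}$, whose dominant term on $|t|=r_m$ is chosen so that $(g_i)$ approximates a translate $\lambda(\varpi^{n})\cdot$(a fixed point of $\Omega$). At each stage we add a correction polynomial that is small on $\{|t|\le r_{m-1}\}$, so earlier constraints survive, and that achieves $\phi(t_k)\in$ the $k$-th polydisc. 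This is Runge/Mittag-Leffler-style approximation on nested affinoid discs, followed by convergence of the sum.

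The main obstacle is the global condition that the image avoids \emph{all} $F$-rational hyperplanes, uniformly on each annulus $r_{m-1}\le|t|\le r_m$, and not only at the interpolation points. Each annulus must be mapped into a region where the distance to every rational hyperplane is controlled, while the interpolation requires reaching ever smaller tubes. I would first handle this by making the dominant monomial on each annulus equal to $\lambda(\varpi^{n_m})$ applied to a fixed curve segment inside a fixed affinoid $\Omega_{m_0}$. Then $\lambda$-equivariance, and the fact that $\lambda(F^\times)$ preserves $\Omega$, transport the avoidance property. The corrections only need to be small relative to the (explicitly computable) distance of that translate to the boundary. The hypothesis $d\ge3$ enters here. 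With at least two independent coordinates, the curve can connect successive translates without being forced through an $F$-rational point, and the orbit of a curve can fill the $(d-1)$-dimensional space. For $d=2$ the analogous statement would clash with the group action having one-dimensional orbits.
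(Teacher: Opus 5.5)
\textbf{The approach in Step 3 cannot work.} You require $\phi=[g_1:\cdots:g_{d-1}:1]$, with $g_i\in\mathcal O(\mathbb D)$, to map the whole open disc into $\Omega_F^{d-1}$. Then for every $a\in F^d\setminus\{0\}$ the function $g_a=a_1g_1+\cdots+a_{d-1}g_{d-1}+a_d$ has no zeros on $\mathbb D$.

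\begin{itemize}
\item By Weierstrass preparation on each closed disc $|t|\le r<1$, a zero-free power series on the open disc is strictly dominated by its constant term. Hence $|g_a(t)|=|g_a(0)|$ for all $|t|<1$.
\item So every $|g_a|$, including each $|g_i|$, is constant on $\mathbb D$. Equivalently, $\tau\circ\phi$ is constant.
\item Membership in a tube around an $F$-rational hyperplane, and hence in each $\Omega_m$ of the standard exhaustion, depends only on the ratios $|g_a|/\max(1,|g_1|,\ldots,|g_{d-1}|)$. Therefore $\phi(\mathbb D)$ lies in the same $\Omega_{m_0}$ as $\phi(0)$.
\end{itemize}

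This contradicts your conditions $\phi(\{|t|\ge r_m\})\cap\Omega_{m-1}=\varnothing$ and $\phi(t_k)$ near $w_k$ with $w_k$ escaping to the boundary. No choice of ``spiral'' or Runge corrections can avoid it. For a punctured line it is worse: zero-free functions on $\mathbf G_{m,\Cp}^{\an}$ are monomials $ct^n$, which forces $\phi$ to be constant. A working curve therefore cannot be the full image of a disc or of $\mathbf G_m$. Its source must have infinitely many punctures, at the points where an ambient curve crosses rational hyperplanes.

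Separately, nothing in your construction makes $\phi$ injective or unramified. So the smoothness claimed in Step 4 is not justified either.

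\textbf{What the paper does instead.} It does not require the source to land in $\Omega_F^{d-1}$.

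\begin{itemize}
\item It takes entire functions $f_2,\ldots,f_{d-1}$ on $\mathbf G_{m,\Cp}^{\an}$ and sets $\gamma(x)=[x:f_2(x):\cdots:f_{d-1}(x):1]$. This is the graph of $(f_2,\ldots,f_{d-1})$, hence a closed immersion into the open set $U\supset\Omega_F^{d-1}$ where the first and last coordinates are nonzero.
\item It puts $C_0=\gamma^{-1}(\Omega_F^{d-1})$ and $A=\gamma(C_0)$. Closedness of $A$ in $\Omega_F^{d-1}$ and smoothness then follow by base change.
\item Your contraction idea becomes exact interpolation. Take a dense sequence $u_n=[v_{1,n}:\cdots:v_{d-1,n}:1]$ and set $x_n=\varpi^{(d-1)N_n}v_{1,n}\to0$. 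Kiehl's Theorem B on the quasi-Stein space $\mathbf G_{m,\Cp}^{\an}$ gives $f_i(x_n)=\varpi^{(d-i)N_n}v_{i,n}$.
\item Then $\lambda(\varpi^{-N_n})\gamma(x_n)=u_n$, and $\lambda$-stability of $\Omega_F^{d-1}$ gives $x_n\in C_0$ automatically.
\item The removed locus consists of zeros of the nonzero functions $g_a$. These are type I points, hence endpoints of the Berkovich tree, so $C_0$ stays path connected. Being smooth, it is irreducible.
\item Finally, $x_n\to0$ makes $C_0\cong A$ non-quasi-compact.
\end{itemize}

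The hyperplane-avoidance problem you flag as the main obstacle is therefore bypassed, not solved, and by the argument above it has to be bypassed.
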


The usage of Drinfeld upper half space is important here.
The \(J\)-action on \(\mathcal D^{\Ber}\) is proper, whereas the action on the full projective space is not.
Properness allows us to restrict the acting elements to a compact subset when both the source image and the target region are compact.
The curve with dense orbit in \Cref{thm: Drinfeld example} lies in the same Drinfeld upper half space as in \Cref{thm:intro-qc}, which shows that quasi-compactness of the source is still necessary even when the group action is proper.

\medskip
We also give two examples whose targets are projective spaces, one arising from an open rigid disc and the other from a quasi-compact affinoid perfectoid torus.  
Let $\mathbb D^-_{\Qp}$ be the rigid open unit disc with coordinate $T$, defined by $|T|<1$.
\begin{theorem}
There are power series $A,B\in\mathbb Q_p[[T]]$ converging for $|T|<1$ such that the analytic morphism
\[
 f:\mathbb D^-_{\mathbb Q_p}\to\mathbf P^2_{\mathbb Q_p},
 \quad f(T)=[A(T):B(T):1],
\]
satisfies
\[
 \overline{\PGL_3(\mathbb Q_p)f(\mathbb D^-_{\mathbb Q_p}(\Cp))}^{\,\mathrm{cl}}
     =\mathbf P^2(\Cp).
\]
Both $A$ and $B$ are unbounded on the open unit disc.
\end{theorem}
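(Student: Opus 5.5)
We would build $A$ and $B$ by an interpolation argument on the open disc, whose classical points accumulate only toward the boundary $|T|\to1$, and then use the group action to turn a countable dense set of values into a dense orbit. The basic principle is this. $\mathbf P^2(\Cp)$ has a countable dense subset, for instance points with coordinates in $\overline{\mathbb Q}$. Fix an enumeration $(x_n)_{n\ge1}$ of such a dense subset of the affine chart $\{[a:b:1]\}$, and fix a sequence of radii $r_n\uparrow 1$ with $r_n\in p^{\mathbb Q}$. We cannot simply prescribe $f(t_n)=x_n$ at points $t_n$ of absolute value $r_n$. The values $x_n$ need not lie in $\Qp$, and series with $\Qp$-coefficients only hit $\Qp$-Galois orbits; more seriously, the prescribed values would have to grow along with $n$ while convergence on $|T|<1$ still holds. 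This is where $\PGL_3(\Qp)$ helps. It suffices that for each $n$ there are $g_n\in\PGL_3(\Qp)$ and a classical point $t_n\in\mathbb D^-(\Cp)$ with $g_nf(t_n)$ within $1/n$ of $x_n$. So we are free to choose $g_n$ to absorb scaling. Take $g_n=\operatorname{diag}(p^{-a_n},p^{-b_n},1)$ with $a_n,b_n\to\infty$. Then $g_n[A(t):B(t):1]=[p^{-a_n}A(t):p^{-b_n}B(t):1]$, and we need $A(t_n)\approx p^{a_n}\alpha_n$ and $B(t_n)\approx p^{b_n}\beta_n$, where $x_n=[\alpha_n:\beta_n:1]$.

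Concretely, the steps are as follows. (1) Choose $t_n\in\overline{\mathbb Q}_p$ with $|t_n|=r_n$ on distinct circles, for example $t_n=\pi_n$ a root of $T^{e_n}=p$, so that the fields $\Qp(t_n)$ are totally ramified of growing degree. Choose $\alpha_n,\beta_n$ in $\Qp(t_n)$ after enlarging the dense set. We may do this because $\bigcup_n\Qp(p^{1/e_n})$ with $e_n\to\infty$ divisible-cofinal is dense in $\Cp$ (every element of $\overline{\mathbb Q}_p$ is approximated by elements of the tamely/totally ramified tower together with unramified parts; alternatively we take $t_n$ generic enough that $\Qp(t_n)$ contains a given finite extension). (2) Build $A=\sum_k A_k(T)$ inductively, with polynomial corrections $A_k\in\Qp[T]$. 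Each $A_k$ is chosen to be divisible by $\prod_{j<k}\mathrm{Min}_{t_j}(T)^{N}$ so that earlier values are only slightly perturbed, and so that $A_k(t_k)$ hits the target $p^{a_k}\alpha_k-\sum_{j<k}A_j(t_k)$. The key point is that any element of $\Qp(t_k)$ is $P(t_k)$ for some $P\in\Qp[T]$. (3) Choose $a_k$ huge, which makes the target small, so that $\|A_k\|_{r_{k-1}}\le p^{-k}$ while the perturbation at $t_j$, $j<k$, stays below $p^{-a_j-j}$. This preserves the approximations already achieved, since the errors are measured relative to the scale $p^{a_j}$. Because $\|A_k\|_r\to0$ for every $r<1$, the series converges on $|T|<1$. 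Handle $B$ identically. (4) The density of $\{x_n\}$, together with the approximations $g_nf(t_n)\approx x_n$, gives density in the affine chart, hence in $\mathbf P^2(\Cp)$. (5) Unboundedness follows by additionally forcing $|A(s_k)|\ge k$ and $|B(s_k)|\ge k$ at auxiliary points $s_k$ with $|s_k|\to1$, interleaved in the same induction; in fact any nonpolynomial choice forced by step (2) can be arranged to be unbounded.

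The main obstacle is step (3): controlling the sup norm of the interpolating polynomial $A_k$ on $|T|\le r_{k-1}$ while it simultaneously vanishes to high order at the earlier $t_j$ and takes the prescribed value at $t_k$. A polynomial expressing a value in $\Qp(t_k)$ may need large coefficients, since its coefficients grow like the inverse of the discriminant. The estimate to aim for is
\[
\|A_k\|_{r_{k-1}}\le C_k\,|p^{a_k}\alpha_k-\textstyle\sum_{j<k}A_j(t_k)|\cdot (r_{k-1}/r_k)^{-M_k},
\]
with $C_k,M_k$ depending only on $t_1,\dots,t_k$ and not on $a_k$. This estimate is what makes the free choice of $a_k$ work: once $t_k$ and the earlier data are fixed, we pick $a_k$ large enough to beat $C_k$. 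The subtle point is that the target also contains $\sum_{j<k}A_j(t_k)$, which is fixed and not small. To handle it, we instead let the group element include a translation $g_n$ that first subtracts this known quantity, which is a unipotent element over $\Qp$ only if that quantity is $\Qp$-rational. I would therefore first try to arrange $\sum_{j<k}A_j(t_k)\in\Qp$ approximately. Failing that, I would choose $\alpha_k$ so that the needed correction is small after conjugating by a $\Qp$-scaling.
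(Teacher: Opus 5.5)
Your step (3) is where the argument breaks, and it is the core of the construction: you never establish convergence of \(\sum_k A_k\) on \(|T|<1\). Your mechanism is to make the interpolation target \(c_k=p^{a_k}\alpha_k-\sum_{j<k}A_j(t_k)\) small by taking \(a_k\) large. But as \(a_k\to\infty\) one has \(c_k\to-\sum_{j<k}A_j(t_k)\), a fixed and generally nonzero element of \(\mathbb Q_p(t_k)\), so enlarging \(a_k\) gains nothing, as you observe yourself. The repairs you suggest are not carried out and are not viable as stated. Unipotent elements of \(\PGL_3(\mathbb Q_p)\) translate the affine coordinates only by elements of \(\mathbb Q_p\), whereas \(\sum_{j<k}A_j(t_k)\) lies in \(\mathbb Q_p(t_k)\) and is typically not close to \(\mathbb Q_p\). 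Nothing is offered to make it ``approximately \(\mathbb Q_p\)-rational'' or to absorb it by a rescaling. Your displayed estimate also points the wrong way. It carries the factor \((r_{k-1}/r_k)^{-M_k}\geq1\) with \(M_k\) fixed, treating the passage from radius \(r_k\) down to \(r_{k-1}\) as a loss, when it is exactly the gain you need.

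The missing idea is that the size of \(c_k\) is irrelevant, because \(t_k\) lies on a circle strictly larger than the disc on which you need smallness. Let \(\mu_j\in\mathbb Q_p[T]\) be the minimal polynomial of \(t_j\). All its roots have absolute value \(|t_j|\), so \(\mu_j(t_k)\neq0\) for \(j<k\). Put \(A_k=T^M\prod_{j<k}\mu_j\cdot Q\), with \(\deg Q<[\mathbb Q_p(t_k):\mathbb Q_p]\) determined by \(A_k(t_k)=c_k\). Since \(Q\mapsto Q(t_k)\) is a linear isomorphism of finite-dimensional normed \(\mathbb Q_p\)-spaces, \(\|Q\|_{r_{k-1}}\leq C_k|Q(t_k)|\). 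Hence \(\|A_k\|_{r_{k-1}}\leq C_k'\,|c_k|\,(r_{k-1}/r_k)^M\) with \(C_k'\) independent of \(M\), and taking \(M\) large gives \(\|A_k\|_{r_{k-1}}\leq p^{-k}\). Meanwhile \(A_k(t_j)=0\) for \(j<k\) keeps all earlier values exact, so there is no perturbation to control. With this fix, your Mittag-Leffler construction is a legitimate and more elementary alternative to the paper's route. The paper instead notes that the open disc is quasi-Stein and applies Kiehl's Theorem~B to the locally finite reduced subspace \(Z=\coprod_j z_j\). This gives surjectivity of \(\mathcal O(\mathbb D^-_{\mathbb Q_p})\to\prod_j L_j\) and prescribes \(A(t_j)=p^{N_j}a_j\), \(B(t_j)=p^{N_j}b_j\) exactly in one stroke.

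Two smaller points. In step (1), the claim that \(\bigcup_n\mathbb Q_p(p^{1/e_n})\) is dense in \(\Cp\) is false. All these fields have residue field \(\mathbb F_p\), so their union stays at distance \(1\) from any unit whose reduction lies outside \(\mathbb F_p\). Your alternative, a primitive uniformizer \(t_n\) of small positive valuation in a finite extension containing \(\alpha_n,\beta_n\), is the one that works, and it is what the paper proves. In step (5), no auxiliary points are needed: \(A(t_k)=p^{a_k}\alpha_k\to0\) with \(|t_k|\to1\). Writing \(A=\sum_m c_mT^m\), if \(A\neq0\) were bounded, then discreteness of \(|\mathbb Q_p^\times|\) forces \(\sup_m|c_m|\) to be attained at a first index \(e\). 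Near the boundary this gives \(|A(t)|=|c_e|\,|t|^e\), which cannot tend to \(0\).
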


The last theorem resume quasi-compactness but its domain is perfectoid rather than a strict rigid space of finite type. We start with the unit torus $|T|=1$ and adjoin compatible $p$-power roots of $T$. Completing the resulting ring of analytic functions gives the perfectoid torus used below.

\begin{theorem}
There exist a connected quasi-compact affinoid perfectoid space $P$, whose underlying adic topological space has dimension one, and an  analytic morphism $g:P\to\mathbf P^2_{\Cp}$ such that \(\overline{\PGL_3(\mathbb Q_p)g(P(\Cp))}^{\,\mathrm{cl}}\) contains a nonempty open subset of $\mathbf P^2(\Cp)$.
\end{theorem}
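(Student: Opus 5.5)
We construct the perfectoid torus and the map explicitly. Let \(P=\operatorname{Spa}(R,R^\circ)\), where \(R\) is the \(p\)-adic completion of \(\Cp\langle T^{\pm1/p^\infty}\rangle\); elements are sums \(\sum_{a\in\mathbb Z[1/p]}c_aT^a\) with \(c_a\to0\). This space is connected and affinoid perfectoid, and its underlying space has dimension one. The point of using \(P\) is that its \(\Cp\)-points are compatible systems \((t^{1/p^n})_n\) with \(|t|=1\), so a single quasi-compact source carries infinitely many independent "directions". We choose
\[
 g=[\,1:\ G(T):\ H(T)\,],\qquad G,H\in R,
\]
so that, as \(t\) ranges over \(P(\Cp)\), the pair \((G,H)\) is dense in an open polydisc. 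For example, take \(G=T\) and
\[
 H=\sum_{n\ge1}p^{n}T^{1/p^{k_n}},
\]
with \(k_n\) rapidly increasing. Over a fixed \(t\), the choices of \(p^{k}\)-th roots differ by \(p^k\)-th roots of unity \(\zeta\). Changing the root at level \(k_n\) alters the \(n\)-th term by \(p^n(\zeta-1)t^{1/p^{k_n}}\). We then use \(\PGL_3(\Qp)\) to rescale and translate, magnifying these small variations.

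The steps are as follows.
\begin{enumerate}[label=(\arabic*)]
\item Fix \(t\). The fibre of \(P(\Cp)\to\{|t|=1\}\) over \(t\) is a profinite set \(\cong\mathbb Z_p\), and we compute the image of \(H\) on it. With \(k_n\) chosen so that the levels separate, this image is a Cantor-type set \(S_t\).
\item Act by \(\gamma_m=\operatorname{diag}(1,1,p^{-m})\), composed with translations by elements of \(\Qp\). Then \(\gamma_m\) applied to \((G,H)\) gives \((t,\ p^{-m}H)\). For a suitable \(m=m_n\), the variation coming from level \(k_n\) becomes of unit size.
\item Also vary \(t\) in a small disc. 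The family \(p^{-m}H(t\varepsilon)\), with \(\varepsilon\) ranging over a small disc, then sweeps out an open set in the second coordinate. This holds because \(\varepsilon\mapsto(\varepsilon^{1/p^{k}})\) has \(p\)-adic analytic behaviour on the roots, and the dominant term is linear in \(\varepsilon^{1/p^{k_n}}\), which takes all values in a disc.
\item Conclude that the closure contains an open set \(\{|x_1-a|<r,\ |x_2-b|<r\}\) in the affine chart. The first coordinate is exactly \(t\), which is unconstrained in a disc; the second becomes dense by step (3), using a limit over \(n\).
\end{enumerate}

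The main obstacle is step (3). We must ensure that after rescaling, the tail of higher terms (with \(n'>n\)) and the lower terms (with \(n'<n\), which are amplified by \(p^{-m}\)) do not destroy density. The lower terms are amplified but are locally analytic in \(t^{1/p^{k_{n-1}}}\); we would kill them by choosing the translation element of \(\Qp\) appropriately, or better, by replacing the diagonal matrix with a matrix depending on the first coordinate. An alternative design avoids this interference entirely. Take \(H=\sum_n p^{n}\,e_n(T^{1/p^{k_n}})\), where each \(e_n\) is chosen so that a \(\Qp\)-affine combination of \(H\) and \(G\), after rescaling, approximates a coordinate function on a small disc. If the cancellation fails, we would instead use a limit of translates \(\gamma_n g\), each approximating a local section whose image is dense, via a diagonal argument.
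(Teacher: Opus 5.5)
There is a genuine gap, and it is in your step (3), which carries the whole argument. You claim that after multiplying by $p^{-n}$, the scaling that makes the level-$k_n$ variation of unit size, the level-$k_n$ term of $H=\sum_{n\ge1}p^nT^{1/p^{k_n}}$ is dominant. This is false.

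\begin{itemize}
\item The earlier terms $p^{n'-n}t^{1/p^{k_{n'}}}$ with $n'<n$ have absolute value $|p|^{n'-n}>1$, so their sum has absolute value exactly $|p|^{1-n}$.
\item The later terms give $\sum_{n'>n}p^{n'-n}t^{1/p^{k_{n'}}}$, of absolute value exactly $|p|$. This does not tend to $0$, so the tail alone already blocks arbitrary precision.
\end{itemize}

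A translation by $q\in\Qp$ brings $p^{-n}H(x)+q$ into a fixed bounded region only if $H(x)$ lies within $O(|p|^n)$ of $\Qp$. That is an arithmetic condition on the early roots of $t$, and you have no way to arrange it for all $t$ in a disc. A ``matrix depending on the first coordinate'' is not available either: each group element is one fixed matrix with entries in $\Qp$.

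The points where the early terms are nearly rational, for instance $t$ extremely close to $1$ with all early roots close to $1$, are exactly the points where your first coordinate $G=T$ is pinned. So they cannot produce a two-dimensional open set. Moreover $H$ has no free parameters, so nothing in the construction forces density. The ``alternative design'' and the ``diagonal argument'' at the end are suggestions, not proofs.

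The missing idea is to put the density into the coefficients, and to evaluate at points where the earlier terms are tiny rather than amplified. The paper's construction is:
\begin{itemize}
\item use $S_m=T^{1/p^m}-1$ instead of $T^{1/p^m}$;
\item take even $k_j$ with $k_{j+1}-k_j\to\infty$ and a dense sequence $(v_j,w_j)$ in $(\mathcal O_{\Cp}^\times)^2$;
\item set $h_1=\sum_j p^{k_j/2}w_jS_{k_j}$, $h_2=\sum_j p^{k_j}v_jS_{k_j}^2$, and $g=[h_2:h_1:1]$.
\end{itemize}

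Evaluate at a point $x_j$ with $T^{1/p^{k_j}}(x_j)=1+p^c$, where $c>1/(p-1)$. The earlier roots are then $(1+p^c)^{p^{k_j-k_l}}$, so $v(S_{k_l}(x_j))=c+k_j-k_l$ for $l<j$, while $v(S_{k_l}(x_j))\ge0$ for $l>j$. Put $n_j=k_j/2+c$. The normalized error in $p^{-n_j}h_1(x_j)$ has valuation at least $\min\{(k_j-k_{j-1})/2,\,(k_{j+1}-k_j)/2-c\}\to\infty$, and for $h_2$ the bound is twice that. Hence $\operatorname{diag}(p^{-2n_j},p^{-n_j},1)\,g(x_j)=[v_j+\varepsilon_{2,j}:w_j+\varepsilon_{1,j}:1]$ with $\varepsilon_{i,j}\to0$.

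The square in $h_2$ lets a single diagonal element normalize both coordinates at once. Density of every tail of $(v_j,w_j)$ then puts $\{[v:w:1]:|v|=|w|=1\}$ in the orbit closure. All the $x_j$ lie extremely close to $T=1$, and no continuous variation of $t$ is needed: the two-dimensional freedom comes entirely from the coefficients.
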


The construction exploits functions coming from successively deeper levels of the perfectoid toric tower. By evaluating at suitable points and applying diagonal rescalings, we obtain a dense set of limits in the open locus
\[
\{[v:w:1]: |v|=|w|=1\}\subset \mathbf P^2(\mathbb C_p).
\]

We note that the examples in this paper are examples for local analytic group
actions, so they do not disprove the conjecture 
\cite[Conjecture~1.3]{FuNowhereDensity} for the Hecke orbits of algebraic subvarieties of global Shimura varieties.

We also note that Theorem~\ref{thm:intro-qc} is also applied in the companion paper \cite{FuGlobalDrinfeld} to
Varshavsky's globally Drinfeld-uniformized projective unitary Shimura varieties.

\vspace{1em}
\noindent \textbf{Organization of the paper.}  \Cref{sec:conventions} fixes the topology and terminology, gives an example of a nonclosed orbit, and recalls two analytic facts.  \Cref{sec:finite-precision,sec:drinfeld-qc} prove the quasi-compact Drinfeld theorem.  In \Cref{sec:drinfeld-counterexample} we construct the non-quasi-compact Drinfeld curve, and in \Cref{sec:qp-gauss-counterexample} we give the $\Qp$-defined example on the open disc.  \Cref{sec:perfectoid-torus-counterexample} treats the perfectoid example.

\section*{Acknowledgements}
The author is grateful to Ananth Shankar for suggesting that she consider whether there is a rigid analytic analogue of the \(p\)-adic nowhere density conjecture for Hecke orbits many years ago. This question led to the present paper. She also thanks Mingjia Zhang for a conversation about \(p\)-adic orbit closures. The conversation inspired her to investigate orbit closures on flag varieties and $p$-adic period domains more closely.
\section{Classical topology and preliminaries}
\label{sec:conventions}

For a strict rigid analytic space $X$ we write $X^{\Ber}$ for its associated Berkovich space. 
An analytic map $f:Y\to X$ induces a continuous map
$f^{\Ber}:Y^{\Ber}\to X^{\Ber}$; we also write $f$ for this map when its source is clear. Over $\Cp$, the classical $\Cp$-rational points are precisely the type I Berkovich points whose completed residue field is $\Cp$. We denote their set by $X(\Cp)$ and give it the usual nonarchimedean topology induced on affinoid charts by embeddings in polydiscs.
For $E\subset X(\Cp)$, the notation \(\overline E^{\,\mathrm{cl}}\) means closure in this classical topology.

For the perfectoid space in \Cref{sec:perfectoid-torus-counterexample}, we use the adic formalism. In that setting, $X(\Cp)$ denotes the set of morphisms $\operatorname{Spa}(\Cp,\mathcal O_{\Cp})\to X$ over $\Cp$.
The images of these points in a rigid analytic target are classical points of that target, and their closures are taken in the same classical topology just specified.

Let $G$ be a topological group acting continuously on $X$.  For $E\subset X(\Cp)$, define \(G E=\bigcup_{g\in G}gE.\)
We say that the $G$-orbit of $E$ is \emph{nowhere dense} if \(\operatorname{Int}_{X(\Cp)} \overline{G E}^{\,\mathrm{cl}}=\varnothing.\) We say that it is \emph{dense} if the same closure equals $X(\Cp)$.

\medskip
\noindent\textbf{A nonclosed orbit.}
The orbit need not itself be closed, even when the subspace is projective and algebraic.
For example, choose \(\alpha\in\overline{\mathbb Q}_p\setminus\mathbb Q_p\), let \(E=\mathbb Q_p(\alpha)\), and let \(L_\alpha=\{X_0+\alpha X_1=0\}\subset\mathbf P^2_{\Cp}.\)
For \(g_n=\operatorname{diag}(p^n,1,1)\), we have
\(
 g_nL_\alpha=\{X_0+\alpha p^nX_1=0\}.
\)
Choose \(t\in\Cp\setminus E\).  The points
\[
 x_n=[-\alpha p^n:1:t]\in g_nL_\alpha(\Cp)
\]
converge to \(x=[0:1:t]\).  On the other hand, \(x\notin\PGL_3(\mathbb Q_p)L_\alpha(\Cp)\).  Indeed, the coefficients of an equation for any translate of \(L_\alpha\) lie in \(E\) and span a two-dimensional \(\mathbb Q_p\)-vector space.  If such an equation vanished at \(x\), it would have the form
\[
 aX_0+bX_1+cX_2=0,\quad b+ct=0,
\]
with \(a,b,c\in E\).  Since \(t\notin E\), this forces \(b=c=0\), contrary to the dimension of the coefficient span.  Thus \(\PGL_3(\mathbb Q_p)L_\alpha(\Cp)\) is not closed. This is why our question concerns the size of the orbit closure rather than the closedness of the orbit itself.

For the remainder of this section, fix a complete nonarchimedean field $K$ with nontrivial valuation. We consider quasiseparated strict rigid analytic $K$-spaces that admit an admissible affinoid covering of finite type, as in \cite[Theorem~1.6.1]{BerkovichEtale}.  Such a covering is of finite type if each member meets only finitely many other members.  In particular, every quasi-compact separated strict rigid analytic $K$-space satisfies these conditions.  We record the two standard facts needed in the orbit argument.
\begin{proposition}
\label{prop:berkovich-compactness}
Let $Y$ be a separated
strict rigid analytic $K$-space. Then $Y^{\Ber}$ is Hausdorff. If $Y$ is
also quasi-compact, then $Y^{\Ber}$ is compact Hausdorff.
\end{proposition}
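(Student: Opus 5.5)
The plan is to reduce everything to the affinoid case, where Berkovich's foundational results apply, and then glue. First recall the local statement: for a strict $K$-affinoid algebra $\mathcal A$, the Berkovich spectrum $\mathcal M(\mathcal A)$ of bounded multiplicative seminorms is a nonempty compact Hausdorff space. It embeds into a product of closed intervals $\prod_{a\in\mathcal A}[0,\|a\|]$ via $x\mapsto(|a(x)|)_a$, and its image is closed because it is cut out by the multiplicativity, ultrametric and boundedness conditions. Tychonoff then gives compactness, and the product topology gives Hausdorffness. This is the standard result from Berkovich's book, which I will cite rather than reprove.

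Next, for a general $Y$ in the class fixed above (quasiseparated, with an admissible affinoid covering of finite type), $Y^{\Ber}$ is obtained by gluing the compact spaces $\mathcal M(\mathcal A_i)$ along the affinoid domains $V_i\cap V_j$; this is the construction cited as \cite[Theorem~1.6.1]{BerkovichEtale}. The finite-type condition makes the gluing well behaved. Each $V_i^{\Ber}$ is a compact subset, and the interiors of finite unions of these compact pieces cover $Y^{\Ber}$, so $Y^{\Ber}$ is locally compact.

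\textbf{Hausdorffness.} This is the main step, and it is where separatedness enters. Since $Y$ is separated, the diagonal $\Delta:Y\to Y\times_K Y$ is a closed immersion. On Berkovich spaces, closed immersions induce closed embeddings, and the analytification commutes with fibre products, so $\Delta^{\Ber}(Y^{\Ber})$ is closed in $(Y\times Y)^{\Ber}$. The subtle point is that $(Y\times Y)^{\Ber}$ does not carry the product topology of $Y^{\Ber}\times Y^{\Ber}$. So I would argue with two given points directly. Take $x\neq y$ in $Y^{\Ber}$, lying in affinoids $V_i$ and $V_j$.
\begin{itemize}
\item By separatedness, $V_i\cap V_j$ is an affinoid domain, and the restriction map makes $\mathcal A_i\widehat\otimes\mathcal A_j\to\mathcal A_{ij}$ surjective.
\item If both points lie in $V_i^{\Ber}$, separate them by a function $a\in\mathcal A_i$ with $|a(x)|\neq|a(y)|$, using Hausdorffness of $\mathcal M(\mathcal A_i)$.
\item Otherwise, suppose $x\in V_i^{\Ber}\setminus V_j^{\Ber}$. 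Since $(V_i\cap V_j)^{\Ber}$ is compact, hence closed, in $V_i^{\Ber}$, choose a compact neighbourhood of $x$ in $V_i^{\Ber}$ disjoint from it. Then work in the compact Hausdorff space $(V_i\cup V_j)^{\Ber}$, which equals $V_i^{\Ber}\cup V_j^{\Ber}$ glued along a closed set, and separate the points there.
\end{itemize}
More cleanly, I would just cite Berkovich's result that $Y^{\Ber}$ is Hausdorff if and only if $Y$ is separated. I expect this citation to be the intended route.

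\textbf{Compactness when $Y$ is quasi-compact.} Choose a finite admissible affinoid covering $Y=V_1\cup\dots\cup V_n$. The canonical map $\bigsqcup_i V_i^{\Ber}\to Y^{\Ber}$ is continuous and surjective, and a finite disjoint union of compact spaces is compact. Hence $Y^{\Ber}$ is a continuous image of a compact space, so it is compact, and by the previous step it is compact Hausdorff.
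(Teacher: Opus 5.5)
Your argument is correct and is essentially the paper's proof. Hausdorffness is quoted from Berkovich (the paper cites \cite[Proposition~1.4.2]{BerkovichEtale}), and compactness follows because $Y^{\Ber}$ is the union of the finitely many compact spectra $\mathcal M(\mathcal A_i)$ of a finite affinoid covering.

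Only the implication ``separated $\Rightarrow$ Hausdorff'' is needed, and your ``if and only if'' overstates it: two closed unit discs glued along $\{|T|=1\}$ give a Hausdorff but non-separated space. Your hand-made separation sketch can simply be dropped. As written it is incomplete, since separating two points inside $V_i^{\Ber}$, which need not be open in $Y^{\Ber}$, does not by itself separate them in $Y^{\Ber}$.
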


\begin{proof}
The Hausdorff assertion is \cite[Proposition~1.4.2]{BerkovichEtale}. If $Y$ is quasi-compact, choose a finite admissible affinoid covering of $Y$. The Berkovich spectrum of each affinoid is compact by \cite[Theorem~1.2.1]{BerkovichSpectral}, so $Y^{\Ber}$ is compact.
\end{proof}

\begin{proposition}
\label{prop:rigid-nullstellensatz}
Assume that \(K\) is algebraically closed. Let $f:Y\to X$ be a morphism of strict rigid analytic $K$-spaces. If $x\in X(K)$ and the Berkovich fiber \((f^{\Ber})^{-1}(x)\) is nonempty, then the analytic fiber \(Y_x=Y\times_X\operatorname{Sp}K\) contains a $K$-rational point. Equivalently, if a $K$-rational point of $X$ lies in the image of $f^{\Ber}$, then it already lies in $f(Y(K))$.
\end{proposition}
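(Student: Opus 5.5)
The plan is to reduce to the affinoid case and then invoke the Nullstellensatz for affinoid algebras. The Berkovich fiber $(f^{\Ber})^{-1}(x)$ is nonempty, so choose a point $y$ in it. Choose an admissible affinoid covering of $Y$ as in the standing conventions. Then $y$ lies in $V^{\Ber}$ for some affinoid domain $V=\operatorname{Sp}B$ in the covering; this uses the fact that $Y^{\Ber}$ is the union of the $V^{\Ber}$. Next choose an affinoid neighbourhood $U=\operatorname{Sp}A$ of $x$ in $X$. After shrinking $V$ to the affinoid domain $V\cap f^{-1}(U)$, we may assume that $f|_V$ factors through $U$. To keep $y$ inside, first pick a finite affinoid covering of $f^{-1}(U)\cap V$, which is admissible open, and take a member whose Berkovich space contains $y$. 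So we may assume $X=\operatorname{Sp}A$ and $Y=\operatorname{Sp}B$, with $f$ given by a bounded $K$-algebra map $\varphi:A\to B$.

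The point $x$ corresponds to a maximal ideal $\mathfrak m\subset A$ with $A/\mathfrak m=K$. The analytic fiber is $Y_x=\operatorname{Sp}(B/\mathfrak mB)$, where $B\widehat\otimes_A K=B/\mathfrak mB$ because $\mathfrak m$ is finitely generated and $B$ is a finite $A$-module modulo completion issues. Indeed, $B\otimes_A A/\mathfrak m$ is already complete as a quotient of an affinoid algebra by a finitely generated ideal, which is automatically closed. The Berkovich point $y$ is a bounded multiplicative seminorm $|\cdot|_y$ on $B$ whose restriction along $\varphi$ is the seminorm of $x$, namely $a\mapsto|a(x)|$. Hence $|\varphi(a)|_y=0$ for all $a\in\mathfrak m$, so $\ker|\cdot|_y\supseteq\mathfrak mB$. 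In particular $\mathfrak mB\neq B$, since $|1|_y=1$. Therefore $B/\mathfrak mB$ is a nonzero affinoid $K$-algebra.

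By the rigid Nullstellensatz (Tate; BGR 6.1.2), a nonzero affinoid algebra has a maximal ideal, and its residue field is finite over $K$. Since $K$ is algebraically closed, that residue field is $K$. This gives a $K$-point of $Y_x$, equivalently a point $y'\in Y(K)$ with $f(y')=x$. The equivalent formulation then follows immediately.

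The only delicate step is the localization. We must ensure that $y$ lies in the Berkovich space of an affinoid domain mapping into $U$. This relies on the compatibility $(f^{-1}(U))^{\Ber}=(f^{\Ber})^{-1}(U^{\Ber})$ and on the fact that Berkovich spaces of the members of an admissible covering of finite type cover $Y^{\Ber}$ (Berkovich, Theorem 1.6.1). If that causes trouble, I would instead argue globally: if $Y_x$ had no $K$-points it would be empty as a rigid space, hence its Berkovich space would be empty. This would contradict the fact that $(f^{\Ber})^{-1}(x)\cong Y_x^{\Ber}$, by compatibility of $(\cdot)^{\Ber}$ with fiber products.
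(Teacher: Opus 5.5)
Your argument is correct. Your fallback paragraph is essentially the paper's proof. The paper cites compatibility of $(\cdot)^{\Ber}$ with fibre products to identify $(f^{\Ber})^{-1}(x)$ with $Y_x^{\Ber}$, concludes $Y_x\neq\varnothing$, and applies the Nullstellensatz to any affinoid domain in $Y_x$.

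Your main route is different. You localize to a morphism of affinoids $\operatorname{Sp}B\to\operatorname{Sp}A$ and prove the needed affinoid case of that compatibility by hand. The kernel of the multiplicative seminorm $|\cdot|_y$ is a proper ideal containing $\mathfrak mB$, so $\mathfrak mB\neq B$. Any maximal ideal $\mathfrak n\supseteq\mathfrak mB$ then has $B/\mathfrak n=K$, and $\varphi^{-1}(\mathfrak n)$ is a proper ideal containing $\mathfrak m$, hence equal to $\mathfrak m$.

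What your route buys is transparency. It shows exactly why a nonempty Berkovich fibre forces a classical preimage. It also replaces the global fibre-product comparison with weaker inputs: the Berkovich spaces of the members of a finite-type covering cover $Y^{\Ber}$, and $(f^{-1}(U))^{\Ber}=(f^{\Ber})^{-1}(U^{\Ber})$. You also need quasi-separatedness of $X$ to get a finite affinoid covering of $V\cap f^{-1}(U)$. The price is the localization bookkeeping, which the paper's two-line argument avoids.

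One small slip: $B$ is not in general a finite $A$-module. What is finite over $A$ is $A/\mathfrak m$, and that is why $B\widehat\otimes_A A/\mathfrak m=B/\mathfrak mB$. In fact you never need to identify $Y_x$ with $\operatorname{Sp}(B/\mathfrak mB)$, because the maximal ideal $\mathfrak n$ above already gives $y'\in Y(K)$ with $f(y')=x$.
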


\begin{proof}
By the rigid--Berkovich comparison and its compatibility with fiber products, \((f^{\Ber})^{-1}(x)\) is the Berkovich realization of $Y_x$; see \cite[Proposition~1.4.1 and Theorem~1.6.1]{BerkovichEtale}. Hence $Y_x$ is nonempty. Choose a nonempty strict affinoid domain $\operatorname{Sp}A\subset Y_x$. By the rigid Nullstellensatz \cite[Corollary~6.1.2/3]{BGR}, the residue field of every maximal ideal of $A$ is a finite extension of $K$. Since $K$ is algebraically closed, such a residue field is $K$, and hence $Y_x(K)\neq\varnothing$.
\end{proof}

\section{Compact families and reduction}
\label{sec:finite-precision}

Let $X$ be a smooth separated strict $\Cp$-analytic space of pure dimension $n$.
Let $Y$ be reduced quasi-compact separated and strict of dimension $s$. Let $f:Y\to X$ be an analytic morphism with $s<n$.  If $f(Y)$ is a closed analytic subspace of dimension at most $\dim Y$, then $f(Y(\Cp))$ has empty interior in $X(\Cp)$.

A compact subset of a $p$-adic group can have positive dimension as a $p$-adic manifold.  Even when the preceding dimension argument applies to each individual translate of $f(Y)$, it does not apply directly to the union of its translates. The translates are indexed by a compact subset of the acting $p$-adic group, which is not in general a $\Cp$-analytic space, and the union need not be an analytic subspace.

We instead work after reduction. For the projective action considered below, once a target polydisc has been fixed, compactness allows the acting elements to be replaced by finitely many representatives without changing the reductions in that polydisc. We will show that the resulting reductions are contained in finitely many algebraic images of dimension at most $s$. We record this property in the following definition.

Let $B\subset X$ be a strict closed rational coordinate polydisc defined by \( |z_i-b_i|\leq|\lambda_i|,\) where \( 1\leq i\leq n.\) Let \(u_i=\frac{z_i-b_i}{\lambda_i}.\)
Here $b_i\in\Cp$ and $\lambda_i\in\Cp^\times$, and the rescaled coordinates $u_i$ identify $B$ with the closed unit polydisc.
Define the reduction map on classical points by
\[
 \red_B:B(\Cp)\rightarrow\mathbf A^n(\Fpbar),
 \quad
 x\mapsto
 \bigl(\overline{u_1(x)},\ldots,\overline{u_n(x)}\bigr),
\]
where the bar denotes reduction modulo the maximal ideal of $\mathcal O_{\Cp}$.
Let $P$ be a compact metrizable space.  Suppose that for every $a\in P$ we have an analytic map \(f_a:Y\to X.\)

\begin{definition}
\label{def:finite-precision}
The family $(f_a)_{a\in P}$ is \emph{finite on every target polydisc} if for each \(B\) there exist finitely many reduced finite type $\Fpbar$-schemes $\mathcal Y_\nu$ of dimension at most $s$ and morphisms
\(
       \overline f_\nu:\mathcal Y_\nu\to
       \mathbf A^n_{\Fpbar}
\)
such that
\begin{equation*}
 \red_B\left(
   \left(\bigcup_{a\in P}f_a(Y(\Cp))\right)\cap B(\Cp)
 \right)
 \subseteq
 \bigcup_\nu\overline f_\nu(\mathcal Y_\nu(\Fpbar)).
\end{equation*} 
The collection $\mathcal Y_\nu$ may depend on $B$.
\end{definition}

The next lemma combines the preceding finiteness property with joint continuity.  In \cref{prop:compact-projective-family} we will verify both hypotheses for the standard action of projective linear groups on projective spaces considered below, uniformly over each compact set of
group elements.
\begin{lemma}
\label{lem:finite-precision-family}
Assume that the family is finite on every target polydisc in the sense of \Cref{def:finite-precision} and that the map
\begin{align}\label{product map}
\Phi: P\times Y^{\Ber} \to X^{\Ber},
       \quad (a,y) \mapsto f_a(y)
\end{align}
is jointly continuous.

If $s<n$, then \(\bigcup_{a\in P}f_a(Y(\Cp))\) is closed and has empty interior in $X(\Cp)$.
\end{lemma}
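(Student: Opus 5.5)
The plan has two parts: closedness from compactness and the Nullstellensatz, and empty interior from a counting argument on reductions.

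\textbf{Closedness.} By \Cref{prop:berkovich-compactness}, $Y^{\Ber}$ is compact Hausdorff. Since $P$ is compact, $P\times Y^{\Ber}$ is compact. By joint continuity of $\Phi$, the image $\Phi(P\times Y^{\Ber})$ is compact in $X^{\Ber}$, hence closed, because $X^{\Ber}$ is Hausdorff ($X$ is separated).

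Now let $x\in X(\Cp)$ be a classical limit of points $x_k=f_{a_k}(y_k)$ with $y_k\in Y(\Cp)$. The inclusion $X(\Cp)\hookrightarrow X^{\Ber}$ is continuous, so $x$ lies in the closed set $\Phi(P\times Y^{\Ber})$. Thus $x=f_a(y)$ for some $a\in P$ and some Berkovich point $y$. Applying \Cref{prop:rigid-nullstellensatz} to $f_a$, we get $x\in f_a(Y(\Cp))$. Hence the union $U=\bigcup_a f_a(Y(\Cp))$ is classically closed.

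\textbf{Empty interior.} Suppose $U$ contains a nonempty open subset of $X(\Cp)$. Since $X$ is smooth of pure dimension $n$, that open set contains a strict rational coordinate polydisc $B$ as above, with $B(\Cp)\subseteq U$.

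The map $\red_B\colon B(\Cp)\to \mathbf A^n(\Fpbar)$ is surjective, since every residue vector lifts. So $\red_B(U\cap B(\Cp))=\mathbf A^n(\Fpbar)$.

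On the other hand, \Cref{def:finite-precision} gives finitely many morphisms $\overline f_\nu\colon \mathcal Y_\nu\to\mathbf A^n_{\Fpbar}$ with $\dim\mathcal Y_\nu\le s$ whose images cover $\mathbf A^n(\Fpbar)$. By Chevalley's theorem, each image is constructible with closure of dimension at most $s<n$. A finite union of such closures is a proper closed subset of $\mathbf A^n_{\Fpbar}$, and since $\Fpbar$ is algebraically closed it misses some $\Fpbar$-point. This is a contradiction.

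\textbf{Main obstacle.} The delicate step is the closedness argument: correctly passing between the classical and Berkovich topologies. This needs continuity of $X(\Cp)\to X^{\Ber}$, the Hausdorff property of $X^{\Ber}$, and the fact that classical points of $X^{\Ber}$ in the compact image actually come from classical points of $Y$, which is exactly what \Cref{prop:rigid-nullstellensatz} provides.

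One also has to make sure that the interior of $U$, if nonempty, really contains a full polydisc $B$ in rescaled coordinates. This follows from local charts of the smooth space $X$ being isomorphic to open polydiscs.
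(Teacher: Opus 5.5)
Your proposal is correct and follows essentially the same route as the paper. Closedness comes from the compact, hence closed, image $\Phi(P\times Y^{\Ber})$ in the Hausdorff space $X^{\Ber}$, with \Cref{prop:rigid-nullstellensatz} identifying its classical points with $\bigcup_a f_a(Y(\Cp))$; empty interior comes from the surjectivity of $\red_B$ on a polydisc $B$ inside the interior, combined with the bound $\dim\le s<n$ on the finitely many images $\overline f_\nu(\mathcal Y_\nu(\Fpbar))$. The only differences are cosmetic: you argue closedness with sequences, where the paper writes the union directly as $C\cap X(\Cp)$, and you cite Chevalley's theorem explicitly for the dimension of the image closures.
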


\begin{proof}
By \Cref{prop:berkovich-compactness}, the Berkovich space $Y^{\Ber}$ is compact and $X^{\Ber}$ is Hausdorff. Let \(C\) be the image of $\Phi$. Since $P$ is compact and $\Phi$ is jointly continuous, $C$ is compact and hence closed in $X^{\Ber}$.
Every point of \(\bigcup_{a\in P}f_a(Y(\Cp))\) is a classical point in $C$.  Conversely, suppose that $x\in X(\Cp)$ and that \(x=f_a(y)\) for some $a\in P$ and $y\in Y^{\Ber}$.  The Berkovich fiber \(f_a^{-1}(x)\) is nonempty.  By \Cref{prop:rigid-nullstellensatz}, this fiber contains a $\Cp$-point. Hence \(x\in f_a(Y(\Cp)).\)
So \(\bigcup_{a\in P}f_a(Y(\Cp))=C\cap X(\Cp).\) Hence this union is closed in \(X(\Cp)\).

Suppose \(\bigcup_{a\in P}f_a(Y(\Cp))\) has nonempty interior in \(X(\Cp)\). Since \(X\) is smooth, there exists a strict closed rational coordinate polydisc \(B\subset X\) such that \[B(\Cp)\subseteq \operatorname{Int}_{X(\Cp)}(\bigcup_{a\in P}f_a(Y(\Cp))).\]
Since the family \((f_a)_{a\in P}\) is finite on every target polydisc, by assumption there exist reduced finite type \(\Fpbar\)-schemes \(\mathcal Y_\nu\) of dimension at most \(s\) and morphisms \(\overline f_\nu:\mathcal Y_\nu\rightarrow\mathbf A^n_{\Fpbar}\)
such that
\[
\red_B(B(\Cp))=\red_B\bigl(\bigcup_{a\in P}f_a(Y(\Cp))\cap B(\Cp)\bigr) \subseteq \bigcup_\nu \overline f_\nu\bigl(\mathcal Y_\nu(\Fpbar)\bigr).
\]
Since \(\red_B\) is surjective, \(\mathbf A^n(\Fpbar) \subseteq \bigcup_\nu \overline f_\nu\bigl(\mathcal Y_\nu(\Fpbar)\bigr).\)
 For each \(\nu\), the Zariski closure of \(\overline f_\nu\bigl(\mathcal Y_\nu(\Fpbar)\bigr)\) in \(\mathbf A^n_{\Fpbar}\) has dimension at most \(\dim\mathcal Y_\nu\leq s<n\), so it is a proper closed subset of \(\mathbf A^n_{\Fpbar}\).
Since there are only finitely many $\nu$, the union cannot cover $\mathbf A^n(\Fpbar)$, contradiction.
\end{proof}

The next proposition shows that the family of maps \(g\circ f\) with \(g\) ranging over a compact subset of a product of projective linear groups satisfies the two hypotheses of \Cref{lem:finite-precision-family}.

\begin{proposition}
\label{prop:compact-projective-family}
For \(1\leq i\leq r\), let \(F_i/\Qp\) be a finite extension and let \(m_i\geq1\). Let \(X=\prod_{i=1}^r\PP^{m_i}_{\Cp}\) and  \(J=\prod_{i=1}^r\PGL_{m_i+1}(F_i)\),  where \(J\) acts factorwise on \(X\). Let \(P\subset J\) be a compact subset.
Let \(Y\) be a reduced quasi-compact separated strict \(\Cp\)-analytic space, and let \(f:Y\to X\) be an analytic morphism.
For \(g\in P\), let \(f_g=g\circ f\). Then the map
\[
\Psi: P\times Y^{\Ber}\rightarrow X^{\Ber}, \quad (g,y)\mapsto g f(y),
\]
is jointly continuous, and the family \((f_g)_{g\in P}\) is finite on every target polydisc.

Consequently, if \(\dim Y<\dim X\), then \(Pf(Y(\Cp))=\bigcup_{g\in P}g f(Y(\Cp))\) is closed and has empty interior in \(X(\Cp)\).
If \(X'\subset X\) is a \(J\)-stable open analytic subspace and \(f\) factors through \(X'\), then the same conclusions hold in \(X'(\Cp)\).
\end{proposition}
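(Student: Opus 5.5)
The plan is to verify the two hypotheses of \Cref{lem:finite-precision-family} and then apply that lemma. The final sentence about a \(J\)-stable open \(X'\) follows by running the same argument with \(X'\) in place of \(X\).

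\textbf{Joint continuity.} The action map \(J\times X^{\Ber}\to X^{\Ber}\) is continuous. I would check this factorwise on \(\PP^{m_i}\). On a standard affine chart, a translate \(gx\) is given by ratios of linear forms whose coefficients are entries of a lift of \(g\) to \(\GL_{m_i+1}(F_i)\). The Berkovich topology is generated by the maps \(x\mapsto|h(x)|\) for analytic functions \(h\) on affinoid domains. For \(g\) near \(g_0\), we can write \(g=g_0(1+\epsilon)\) with \(\epsilon\) small. Then \(|h(gx)|\) is close to \(|h(g_0x)|\), uniformly on affinoids: expand \(h\) around \(g_0\) and use that the polynomial coefficients depend continuously on \(g\). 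Compactness of \(P\) lets us cover by finitely many such neighbourhoods. Composing with the continuous map \(f^{\Ber}\) gives continuity of \(\Psi\).

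\textbf{Finiteness on a target polydisc.} Fix \(B\). The key observation is that nearby group elements give the same reduction in \(B\). There is a compact open subgroup \(K_B\subset J\), for example a deep congruence subgroup \(1+\varpi^N M_{m+1}(\mathcal O_F)\), such that \(\red_B(kx)=\red_B(x)\) whenever \(x\in B(\Cp)\), \(k\in K_B\) and \(kx\in B\). To see this, note that \(B\) is a bounded polydisc inside an affine chart. For \(N\) large relative to \(|\lambda_i|\) and the size of \(B\), the rescaled coordinates satisfy \(|u_i(kx)-u_i(x)|<1\). This requires care, since \(x\) itself need not lie in \(B\) for \(kx\) to lie in \(B\). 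I would handle this by choosing \(K_B\) so that it preserves \(B\) and the slightly larger polydisc \(B'\), so that \(kx\in B\) forces \(x\in B\).

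Since \(P\) is compact, we have \(P\subseteq\bigcup_{j=1}^M g_jK_B\). Then
\[
\red_B\bigl(Pf(Y(\Cp))\cap B(\Cp)\bigr)\subseteq\bigcup_j \red_B\bigl(g_jK_Bf(Y(\Cp))\cap B(\Cp)\bigr).
\]
I would rather use right cosets \(K_Bg_j\), so that \(k\) acts last. Each piece then equals \(\red_B(g_jf(Y(\Cp))\cap B(\Cp))\). This reduces us to finitely many single maps \(g_j\circ f\).

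\textbf{Single maps.} For a single analytic map \(h=g_j\circ f:Y\to X\), the set \(h^{-1}(B)\) is an affinoid-covered quasi-compact domain. Choose a finite affinoid cover of it by reduced affinoids \(\operatorname{Sp}A_\nu\). On each, the functions \(u_i\circ h\) are power-bounded, so after rescaling the norm we may take \(A_\nu\) reduced with its supremum norm. The reduction \(\red_B\circ h\) on \(\Cp\)-points then factors through the canonical reduction \(\widetilde{A_\nu}\). This is a reduced finitely generated \(\Fpbar\)-algebra of dimension \(\dim A_\nu\le s\) (BGR, 6.3.4), and \(\bar u_i\) define a morphism \(\mathcal Y_\nu=\operatorname{Spec}\widetilde{A_\nu}\to\mathbf A^n_{\Fpbar}\). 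Surjectivity of \(\operatorname{Sp}A_\nu(\Cp)\to\mathcal Y_\nu(\Fpbar)\) is not needed, only containment.

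\textbf{Main obstacle.} I expect the hardest step to be the uniform coset-stability of reductions. Concretely, this means choosing \(K_B\) depending on \(B\) so that translates by \(K_B\) do not change \(\red_B\). Some care is also needed with points entering \(B\) from outside. This is handled by taking \(K_B\) to stabilize \(B\), which holds for a small enough congruence subgroup because the action is analytic and \(B\) is compact.
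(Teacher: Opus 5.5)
Your proposal is correct. For the finiteness half it takes a genuinely different and more economical route than the paper.

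The paper argues on the source side. After normalizing the homogeneous coordinates of $f$ on finitely many affinoid pieces of $Y$, it uses joint continuity to see that $I_B=\Psi^{-1}(B^{\Ber})$ is compact. It then covers $I_B$ by finitely many boxes $V_\nu\times U_\nu^{\Ber}$ with $g'f(U_\nu)\subseteq W_\nu$ for $g'\in V_\nu$. Next it shows that $g\mapsto u_j\circ gf$ is continuous for the sup norm on $\mathcal O(U_\nu)$, by approximating $u_j$ with rational functions. Only then does it extract a compact open subgroup $H$, from the uniform continuity of $\delta_{\nu,j}(g,g',y)=|u_j(gf(y))-u_j(g'f(y))|$.

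You argue on the target side instead. Since $B$ is a bounded polydisc in a standard affine chart, a direct estimate shows that a deep congruence subgroup $K_B$ stabilizes $B$ and moves each rescaled coordinate by less than $1$ on $B$. Writing $g=kg_j$, with finitely many right cosets $K_Bg_j$ covering $P$, then gives $g_jf(y)\in B$ and $\red_B(gf(y))=\red_B(g_jf(y))$ without any reference to $f$.

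What your route buys:
\begin{itemize}
\item $K_B$ depends only on $B$.
\item Compactness of $P$ is used only to choose finitely many cosets.
\item Joint continuity enters only when you invoke \Cref{lem:finite-precision-family} to get closedness.
\end{itemize}
The last step, taking canonical reductions of affinoid pieces of $(g_jf)^{-1}(B)$ and bounding their dimension by $\dim Y$, is the same in both arguments.

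What the paper's softer argument buys is that it never writes the action in the coordinates of $B$. It is therefore set up to handle polydiscs whose coordinates $u_j$ are general analytic functions defined near $B$. Your explicit estimate uses standard affine coordinates, which suffices for the polydisc chosen in \Cref{lem:finite-precision-family}. It would extend to general coordinates through the same kind of sup-norm approximation the paper uses.

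Two small remarks:
\begin{itemize}
\item The auxiliary polydisc $B'$ is unnecessary. Once $K_B$ is a group stabilizing $B$, $kx\in B$ already forces $x=k^{-1}(kx)\in B$.
\item You should say why $(g_jf)^{-1}(B)$ has a finite affinoid cover. This holds because $X$ is separated, so $U\cap(g_jf)^{-1}(B)=U\times_XB$ is affinoid for each member $U$ of a finite affinoid cover of $Y$.
\end{itemize}
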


\begin{proof}
The action of \(J\) on \(X\) induces a continuous map \(J\times X^{\Ber}\rightarrow X^{\Ber}.\)
Its composition with \(P\times Y^{\Ber}\rightarrow J\times X^{\Ber}\) that sends \((g,y)\mapsto (g,f(y))\) is the map \((g,y)\mapsto g f(y)=f_g(y)\). This proves joint continuity.

We next prove that the family \((f_g)_{g\in P}\) is finite on every target polydisc. Let \(\operatorname{pr}_i:X\to\PP^{m_i}_{\Cp}\) be the projections. Choose a finite admissible affinoid covering of \(Y\) together with trivializations of each line bundle \((\operatorname{pr}_i\circ f)^*\mathcal O_{\PP^{m_i}_{\Cp}}(1)\) on each member \(U\). Write \((\operatorname{pr}_i\circ f)|_U=[F_{i,0}:\cdots:F_{i,m_i}]\), where \(F_{i,j}\in\mathcal O(U)\) are the homogeneous coordinate functions given by these trivializations.

Cover each \(U\) by the finitely many rational domains defined by
\[
|F_{i,j}|\leq |F_{i,k_i}|, \quad 1\leq i\leq r,\quad 0\leq j\leq m_i,
\]
as \((k_1,\ldots,k_r)\) ranges over the tuples with \(0\leq k_i\leq m_i\).
For each \(i\), the functions \(F_{i,0},\ldots,F_{i,m_i}\) generate the unit ideal, so \(F_{i,k_i}\) is invertible on the corresponding domain.
Dividing each \(F_{i,j}\) by \(F_{i,k_i}\) gives homogeneous coordinate functions whose \(k_i\)-th coordinate is \(1\) and whose other coordinates have absolute value at most \(1\). 

It is enough to work on each of these affinoids separately. Assume that \(Y\) is one of these affinoids.
Fix a strict closed rational coordinate polydisc \(B\subset X\) with rescaled coordinates \(u_1,\ldots,u_n\). Consider the set
\[
I_B=
\left\{
(g,y)\in P\times Y^{\Ber}:g f(y)\in B^{\Ber}
\right\}.
\]
Since \(\Psi\) is continuous and \(B^{\Ber}\) is closed in \(X^{\Ber}\), the set \(I_B=\Psi^{-1}(B^{\Ber})\) is closed in the compact space \(P\times Y^{\Ber}\). Hence \(I_B\) is compact.
For each \((g,y)\in I_B\), choose a rational affinoid domain \(W\) in a product of standard projective affine charts contained in the coordinate chart defining \(B\) such that \(g f(y)\) lies in the interior of \(W^{\Ber}\) and the coordinates \(u_1,\ldots,u_n\) are defined on \(W\). By continuity, there are a relatively open neighborhood \(V\subset P\) of \(g\) and a rational affinoid domain \(U\subset Y\) with \(y\) in the interior of \(U^{\Ber}\) such that \(g'f(U^{\Ber})\subseteq W^{\Ber} \) for every \(g'\in V.\)
Write \(g=(g_1,\ldots,g_r)\). For each \(i\), choose \(M_i\in\GL_{m_i+1}(F_i)\) with \([M_i]=g_i\), and choose indices \(\alpha_i,\beta_i\) such that \((M_i)_{\alpha_i\beta_i}\ne0\). Shrink \(V\) to a compact open neighborhood of \(g\) in \(P\) such that every \(g'=(g_1',\ldots,g_r')\in V\) admits representatives \(M_i(g')\in\GL_{m_i+1}(F_i)\) satisfying \([M_i(g')]=g_i'\) and \((M_i(g'))_{\alpha_i\beta_i}=1.\) Thus we have unique \(M_i(g') \in\GL_{m_i+1}(F_i)\) and the maps \(M_i:V\to\GL_{m_i+1}(F_i)\) are continuous.

Since \(I_B\) is compact, it is covered by finitely many \(V_\nu\times U_\nu^{\Ber}\). Let \(W_\nu\) be the corresponding target domains. Define
\[
\delta_{\nu,j}:V_\nu\times V_\nu\times U_\nu^{\Ber}
\rightarrow\mathbb R_{\geq0},
\quad
\delta_{\nu,j}(g,g',y) =|u_j(gf(y))-u_j(g'f(y))|.
\]
For \(g,g'\in V_\nu\) and \(y\in U_\nu^{\Ber}\), both \(u_j(g f(y))\) and \(u_j(g'f(y))\) are elements of the completed residue field \(\mathcal H(y)\) so \(\delta_{\nu,j}\) is well defined. It is also continuous on \(V_\nu\times V_\nu\times U_\nu^{\Ber}.\)
For a rational function \(R\) in the standard affine coordinates with no poles on \(W_\nu\), the map \(g\mapsto R\circ gf\) is continuous in the uniform norm on \(\mathcal O(U_\nu)\). Indeed, the matrix entries vary continuously, and the pullbacks are obtained from the homogeneous coordinate functions by addition, multiplication, and inversion of units. The functions \(u_j\) can be approximated uniformly by such rational functions on \(W_\nu\).
Since \(gf(U_\nu)\subset W_\nu\) for every \(g\in V_\nu\), the pullbacks converge uniformly on \(U_\nu\) and uniformly in \(g\).
Thus \(g\mapsto u_j\circ gf\) is continuous in the uniform norm.
The continuity of \((h,y)\mapsto |h(y)|\) on \(\mathcal O(U_\nu)\times U_\nu^{\Ber}\) now shows that \(\delta_{\nu,j}\) is continuous.

Since \(\delta_{\nu,j}\) is continuous and \(\delta_{\nu,j}(g,g,y)=0\), compactness of \(V_\nu\) and \(U_\nu^{\Ber}\) gives a compact open subgroup \(H\subset J\) such that \(\delta_{\nu,j}(g,g',y)<1\) whenever \(g,g'\in V_\nu\) lie in the same right coset of \(H\) and \(y\in U_\nu^{\Ber}\). Since there are only finitely many \(\nu\) and \(j\), we can choose the same \(H\) for all of them. Since each \(V_\nu\) meets only finitely many right cosets of \(H\), we can replace it by its nonempty intersections with these cosets and reindex the resulting finite covering.

Choose a representative \(g_\nu\in V_\nu\) for each \(\nu\). Let \(g\in V_\nu\) and \(y\in U_\nu(\Cp)\), and suppose that \(g f(y)\in B(\Cp)\). Then
\(|u_j(g f(y))|\leq1\) for \(1\leq j\leq n\).
Since
\[
\delta_{\nu,j}(g,g_\nu,y)=|u_j(g f(y))-u_j(g_\nu f(y))|<1,
\]
the ultrametric inequality gives \(|u_j(g_\nu f(y))|\leq1.\)
Thus \(g_\nu f(y)\in B(\Cp)\) and reduction modulo
\(\mathfrak m_{\Cp}\) gives
\begin{equation}
\red_B(g f(y))
=
\red_B(g_\nu f(y)).
\label{eq:representative-reduction}
\end{equation}

Let \(D_\nu =U_\nu\cap(g_\nu f)^{-1}(B)\) be the rational affinoid domain of \(U_\nu\) defined by \(|u_j(g_\nu f)|\leq1\) for \(1\leq j\leq n\).
Let \(D_\nu=\operatorname{Sp}A_\nu\) for some \(A_\nu\) and let \(\mathcal Y_\nu=\left( \operatorname{Spec}(A_\nu^\circ/A_\nu^{\circ\circ}) \right)_{\mathrm{red}}.\)
The scheme \(\mathcal Y_\nu\) is of finite type over \(\Fpbar\) and we have \(\dim\mathcal Y_\nu\leq\dim Y \) by \cite[\S6.3.4]{BGR}. The functions \(u_j\circ g_\nu f\) are power-bounded on \(D_\nu\), so their reductions define a morphism \(\overline f_\nu: \mathcal Y_\nu\rightarrow\mathbf A^n_{\Fpbar}.\)

Now let \(g\in P\) and \(y\in Y(\Cp)\) satisfy \(g f(y)\in B(\Cp)\). Choose \(\nu\) such that \((g,y)\in V_\nu\times U_\nu^{\Ber}\). Then \(y\in D_\nu(\Cp)\).
Let \(\widetilde y\in\mathcal Y_\nu(\Fpbar)\) denote the reduction of \(y\). Then \eqref{eq:representative-reduction} gives
\[
\red_B(g f(y))=\red_B(g_\nu f(y))=\overline f_\nu(\widetilde y).
\]
Therefore
\[
\red_B\left(
Pf(Y(\Cp))\cap B(\Cp)
\right)
\subseteq
\bigcup_\nu
\overline f_\nu\bigl(\mathcal Y_\nu(\Fpbar)\bigr).
\]
Since \(B\) is arbitrary, the family \((f_g)_{g\in P}\) is finite on every target polydisc.

The group \(J\) is metrizable, so its compact subset \(P\) is a compact metrizable space. The conclusion then follows from \Cref{lem:finite-precision-family}. The same argument also applies to a \(J\)-stable open analytic subspace \(X'\subset X\).
\end{proof}

\section{The quasi-compact theorem}
\label{sec:drinfeld-qc}

Let \(X_0,\ldots,X_{d-1}\) be the homogeneous coordinates on \(\PP^{d-1}_{\Cp}\).
Since \(\Omega_F^{d-1}\) does not meet the hyperplane \(X_{d-1}=0\), the quotient \(\ell/X_{d-1}\) is analytic on \(\Omega_F^{d-1}\) for every \(F\)-linear form \(\ell=\sum_i a_iX_i\).
For \(x\in(\Omega_F^{d-1})^{\Ber}\), define the norm
\[
N_x(\ell)=\left|\left(\frac{\ell}{X_{d-1}}\right)(x)\right|,
\quad
\ell\in FX_0\oplus\cdots\oplus FX_{d-1},
\]
where the absolute value is taken in \(\mathcal H(x)\). Replacing the denominator \(X_{d-1}\) by another nonzero \(F\)-linear form multiplies \(N_x\) by a positive constant.
The homothety class of \(N_x\) defines the standard continuous \(\PGL_d(F)\)-equivariant retraction to the Bruhat--Tits building
\[
\tau: (\Omega_F^{d-1})^{\Ber} \rightarrow \mathcal B(\PGL_d,F), \quad x\mapsto[N_x];
\]
see \cite[proof of Theorem~1]{BerkovichAutomorphism}.

\begin{lemma}
\label{lem:compact-transporter}
Let \(\mathcal D=\prod_{i=1}^r\Omega_{F_i}^{d_i-1}\) and \(J=\prod_{i=1}^r\PGL_{d_i}(F_i)\).
For compact subsets \(C_0,U\subset\mathcal D^{\Ber}\), the set
\[
S_{C_0,U}=\{g\in J:gC_0\cap U\ne\varnothing\}
\]
is compact.
\end{lemma}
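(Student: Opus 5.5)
Reduce to the Bruhat--Tits building via the retraction \(\tau\), and use that \(J\) acts properly on \(\mathcal B=\prod_i\mathcal B(\PGL_{d_i},F_i)\).

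First, form the product retraction
\[
\tau=\prod_i\tau_i:\mathcal D^{\Ber}\to\mathcal B .
\]
The Berkovich space of the product is not the product of the Berkovich spaces. So I first note that the projections \(\mathcal D^{\Ber}\to(\Omega_{F_i}^{d_i-1})^{\Ber}\) are continuous and \(J\)-equivariant. Composing them with the retractions \(\tau_i\) gives a continuous \(J\)-equivariant map. Since \(C_0\) and \(U\) are compact, the images \(K_0=\tau(C_0)\) and \(K_1=\tau(U)\) are compact subsets of \(\mathcal B\).

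Next, compare the transporters. If \(gC_0\cap U\neq\varnothing\), pick \(x\in C_0\) with \(gx\in U\). Equivariance gives \(g\tau(x)=\tau(gx)\in K_1\), so \(gK_0\cap K_1\neq\varnothing\). Hence
\[
S_{C_0,U}\subseteq S'=\{g\in J: gK_0\cap K_1\neq\varnothing\}.
\]

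The main step is that \(S'\) is compact. I would use that \(J\) acts properly on the locally finite polysimplicial complex \(\mathcal B\).
\begin{itemize}
\item A compact subset of \(\mathcal B\) meets only finitely many closed polysimplices (products of simplices of the factors). So \(K_0\) and \(K_1\) lie in finite unions of polysimplices \(\sigma_a\) and \(\tau_b\) respectively.
\item For polysimplices \(\sigma\) and \(\tau\), the set \(\{g: g\sigma\cap\tau\ne\varnothing\}\) is a finite union of double cosets of the pointwise stabilizers of \(\sigma\) and \(\tau\), since \(\tau\) has finitely many faces.
\item These stabilizers are compact open subgroups: in each factor they are contained in a conjugate of \(\PGL_{d_i}(\mathcal O_{F_i})\), up to the finite stabilizer of a simplex.
\end{itemize}
Hence \(S'\) lies in a compact set.

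Finally, show \(S_{C_0,U}\) is closed in \(J\). Let \(g_n\to g\) with \(g_n\in S_{C_0,U}\), and choose \(x_n\in C_0\) with \(g_nx_n\in U\). Because all \(g_n\) lie in the compact set \(S'\), the action map \(J\times\mathcal D^{\Ber}\to\mathcal D^{\Ber}\) restricts to a continuous map on the compact set \(S'\times C_0\). The set \(\{(h,x)\in S'\times C_0: hx\in U\}\) is then closed, hence compact, and its projection to \(J\) is exactly \(S_{C_0,U}\). A continuous image of a compact set is compact, so \(S_{C_0,U}\) is compact.

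Doing the closedness this way, by projecting a compact set, avoids any sequence or net argument. Berkovich spaces need not be metrizable, so that is the reason to avoid sequences. This also needs the continuity of the \(J\)-action on \(\mathcal D^{\Ber}\), which was already used in \Cref{prop:compact-projective-family}, together with the closedness of \(U\), which holds because \(\mathcal D^{\Ber}\) is Hausdorff.

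I expect the main obstacle to be the properness of the action on the building, namely that \(S'\) is relatively compact. If this cannot be cited directly, I would argue via norms. A compact set of homothety classes of norms is contained, after a fixed rescaling, in the set of norms comparable within a constant to a fixed lattice norm. Then \(gK_0\cap K_1\neq\varnothing\) bounds \(\|g\|\cdot\|g^{-1}\|\) for representatives \(g\in\GL_{d_i}(F_i)\) in each factor. The set of such \(g\) is compact in \(\PGL_{d_i}(F_i)\).
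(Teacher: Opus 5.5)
Your proposal is correct and follows essentially the paper's route: push $C_0$ and $U$ into the building via the equivariant retraction $\tau$, use local finiteness and compactness of simplex stabilizers to confine $S_{C_0,U}$ to finitely many compact cosets, and get closedness by projecting along the compact factor $C_0$. The paper differs only cosmetically: it reduces to one factor at a time instead of using polysimplices, and it uses the unique open simplex containing $\tau(c)$ to get $g\sigma=\eta$ exactly. One small slip in your write-up: you only showed that $S'$ is contained in some compact set $K$, so in the last step work with $K\times C_0$ (or check that $S'$ is closed) rather than calling $S'$ itself compact.
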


\begin{proof}
We first consider a single factor \(\mathcal D=\Omega_F^{d-1}\) and \(J=\PGL_d(F)\).
The building is locally finite, and the setwise stabilizer of each simplex is compact open.
The compact sets \(\tau(C_0)\) and \(\tau(U)\) therefore meet only finitely many simplices.

For \(g\in S_{C_0,U}\), choose \(c\in C_0\) with \(gc\in U\).
Let \(\sigma\) and \(\eta\) be the unique simplices whose relative interiors contain \(\tau(c)\) and \(\tau(gc)\), respectively.
Equivariance gives \(g\tau(c)=\tau(gc)\), so \(g\sigma=\eta\).
There are only finitely many possible pairs \((\sigma,\eta)\).
For each pair for which an element \(g_{\sigma,\eta}\) satisfies \(g_{\sigma,\eta}\sigma=\eta\), we have
\[
\{g\in J:g\sigma=\eta\} =g_{\sigma,\eta}\operatorname{Stab}(\sigma).
\]
Thus \(S_{C_0,U}\) is contained in a finite union of compact cosets.

For the general product, let \(C_{0,i}\) and \(U_i\) be the images of \(C_0\) and \(U\) under the projection to the \(i\)-th factor.
These images are compact, and
\[
S_{C_0,U} \subseteq \prod_{i=1}^r \{g_i\in\PGL_{d_i}(F_i): g_iC_{0,i}\cap U_i\ne\varnothing\}.
\]
By the single factor argument, the right hand side is contained in a compact subset of \(J\).
Hence \(S_{C_0,U}\) is contained in a compact subset of \(J\).

It remains to prove closedness.
Since \(U\) is closed and the action is continuous, the set
\[
E=\{(g,c)\in J\times C_0:gc\in U\}
\]
is closed.
Since \(C_0\) is compact, the projection \(p:J\times C_0\to J\) is a closed map.
Its image \(p(E)=S_{C_0,U}\) is therefore closed.
Thus \(S_{C_0,U}\) is compact.
\end{proof}

\begin{theorem}
\label{thm:drinfeld-qc}
Let $Y$ be a reduced quasi-compact separated strict $\Cp$-analytic space, and let $f:Y\to\mathcal D$ be analytic. Let \(n=\dim\mathcal D=\sum_i(d_i-1).\) If $\dim Y<n$, then \(J f(Y(\Cp))\) is closed and nowhere dense in $\mathcal D(\Cp)$.
\end{theorem}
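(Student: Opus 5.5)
The plan is to reduce the statement to \Cref{prop:compact-projective-family}, applied to the ambient $X=\prod_i\PP^{d_i-1}_{\Cp}$ with the $J$-stable open subspace $X'=\mathcal D$. Compactness of the relevant group elements will come from \Cref{lem:compact-transporter}. Set $C_0=f(Y^{\Ber})\subset\mathcal D^{\Ber}$. By \Cref{prop:berkovich-compactness}, $Y^{\Ber}$ is compact, so $C_0$ is compact.

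For \textbf{closedness}, it suffices to show that every point $x\in\mathcal D(\Cp)$ has a neighbourhood $U$ such that $Jf(Y(\Cp))\cap U(\Cp)$ is closed in $U(\Cp)$. I take $U$ to be a strict closed polydisc around $x$ contained in $\mathcal D$; then $U^{\Ber}$ is compact. By \Cref{lem:compact-transporter}, the set $P=S_{C_0,U^{\Ber}}$ is compact. If $g\in J$ and $gf(y)\in U(\Cp)$ with $y\in Y(\Cp)$, then $g\in P$. Hence
\[
Jf(Y(\Cp))\cap U(\Cp)=Pf(Y(\Cp))\cap U(\Cp).
\]
By \Cref{prop:compact-projective-family} (the version in $X'=\mathcal D$), the set $Pf(Y(\Cp))$ is closed in $\mathcal D(\Cp)$, so its intersection with $U(\Cp)$ is closed in $U(\Cp)$. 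Since $\mathcal D(\Cp)$ is covered by the interiors of such polydiscs, $Jf(Y(\Cp))$ is closed.

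For \textbf{nowhere density}, I use that $Jf(Y(\Cp))$ is now known to be closed, so it equals its own closure. Suppose it contains a nonempty open subset of $\mathcal D(\Cp)$. Since $\mathcal D$ is smooth, that open set contains $B(\Cp)$ for some strict closed polydisc $B\subset\mathcal D$. With $P=S_{C_0,B^{\Ber}}$, which is compact, we then get $B(\Cp)\subseteq Pf(Y(\Cp))$. But $\dim Y<n=\dim\mathcal D=\dim X$, so \Cref{prop:compact-projective-family} says $Pf(Y(\Cp))$ has empty interior. This is a contradiction.

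The main point to check is the \textbf{passage between classical and Berkovich points} and the \textbf{choice of neighbourhoods}. Two things need verifying.
\begin{enumerate}
\item Strict closed polydiscs $B\subset\mathcal D$ with $B^{\Ber}$ compact exist around every classical point and form a neighbourhood basis of the classical topology. This holds because $\mathcal D$ is open in a product of projective spaces and the polydiscs are affinoid.
\item The containment of $g$ in $P$ uses only that $gf(y)$ lies in $gC_0\cap B^{\Ber}$, which is immediate from the definition of $S_{C_0,B^{\Ber}}$.
\end{enumerate}
The real content, namely properness of the action, is already supplied by \Cref{lem:compact-transporter}. The remaining steps are formal.
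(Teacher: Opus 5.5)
Your proposal is correct and follows the paper's proof. Both arguments set $C_0=f(Y^{\Ber})$, use \Cref{lem:compact-transporter} to confine the relevant group elements to a compact set $P$, and then invoke \Cref{prop:compact-projective-family} in $\mathcal D$. The only cosmetic difference is in the closedness step: you argue local closedness on clopen polydiscs $U(\Cp)$, whereas the paper takes a convergent sequence $g_mf(y_m)\to x$ and a compact Berkovich neighbourhood $U_x$; your nowhere-density step coincides with the paper's.
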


\begin{proof}
Let $C_0=f(Y^{\Ber})$. This set is compact by
\Cref{prop:berkovich-compactness} and continuity of $f^{\Ber}$.
We first prove closedness in $\mathcal D(\Cp)$.
Let $x$ be a classical limit of $J f(Y(\Cp))$.
Since the classical topology on $\mathcal D(\Cp)$ is metrizable, choose a sequence \(g_mf(y_m)\to x\), with \(g_m\in J\) and \(y_m\in Y(\Cp)\), and a compact neighborhood $U_x$ of $x$ in $\mathcal D^{\Ber}$.
For all sufficiently large \(m\), we have \(g_mf(y_m)\in U_x\), so \(g_m\) belongs to
\[
P_x=\{g\in J:gC_0\cap U_x\ne\varnothing\},
\]
which is compact by \Cref{lem:compact-transporter}. By \Cref{prop:compact-projective-family}, $P_xf(Y(\Cp))$ is closed, so it contains $x$.  Thus $x\in J f(Y(\Cp))$.

Suppose that there is a nonempty open set \(O\subset J f(Y(\Cp)).\)  Choose a strict closed $n$-dimensional rational coordinate polydisc \(B(\Cp)\subset O\) and a compact analytic neighborhood $U$ of $B$. By \Cref{lem:compact-transporter}, the set
\[
P=\{g\in J:gC_0\cap U\ne\varnothing\}
\]
is compact. For \(z\in B(\Cp)\), write \(z=gf(y)\) with \(g\in J\) and \(y\in Y(\Cp)\). Since \(z\in U\), we have \(g\in P\).
Thus \(B(\Cp)\subset P f(Y(\Cp))\).
Since $\dim Y<n$, by \Cref{prop:compact-projective-family}, $P f(Y(\Cp))$ has empty interior. This contradicts $B(\Cp)\subset Pf(Y(\Cp))$ and proves the theorem.
\end{proof}

\section{A dense non-quasi-compact curve in Drinfeld space}
\label{sec:drinfeld-counterexample}

We now remove quasi-compactness from
\Cref{thm:drinfeld-qc}.  The conclusion fails even for a closed smooth curve and one fixed one-parameter subgroup.
Throughout the section, let $F/\mathbb Q_p$ be finite and let $d\geq3$.
\begin{theorem}
\label{thm:drinfeld-nonqc-counterexample}
Define
\(\lambda:F^\times\to\PGL_d(F)\) by
\[\lambda(q)=[\operatorname{diag}(q^{d-1},q^{d-2},\ldots,q,1)].\]
There is a closed smooth connected irreducible non-quasi-compact analytic curve \(A\subset\Omega_F^{d-1}\) such that
\begin{equation}
 \overline{\lambda(F^\times)A(\Cp)}^{\,\mathrm{cl}}=\Omega_F^{d-1}(\Cp)
\label{eq:drinfeld-dense-orbit}.
\end{equation}
In particular, the $\PGL_d(F)$-orbit of $A(\Cp)$ is dense.
\end{theorem}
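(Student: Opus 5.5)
The plan is to build \(A\) as a small perturbation of the trace on \(\Omega_F^{d-1}\) of a generic line through the boundary point \(e=[0:\cdots:0:1]\), arranged to pass through a sequence of points \(w_k\) whose \(\lambda\)-rescalings are dense. The perturbation is produced by quasi-Stein interpolation on a one-dimensional quasi-Stein curve.

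\textbf{Step 1: choice of targets.} Work in the affine chart \(x_i=X_i/X_{d-1}\), where \(\lambda(q)\) acts by \(x_i\mapsto q^{d-1-i}x_i\). Fix a countable dense subset \(\{z_k\}\subset\Omega_F^{d-1}(\Cp)\), listing each point infinitely often. For integers \(m_k\to\infty\), to be chosen rapidly increasing, set \(w_k=\lambda(p^{m_k})z_k\). These points lie in \(\Omega_F^{d-1}\), since it is \(\PGL_d(F)\)-stable, and they converge to \(e\). The point \(e\) is \(F\)-rational, so it lies on \(F\)-rational hyperplanes and \(\{w_k\}\) is discrete and closed in \(\Omega_F^{d-1}\). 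If \(A\) contains every \(w_k\), then \(\lambda(p^{-m_k})w_k=z_k\in\lambda(F^\times)A(\Cp)\), which gives \eqref{eq:drinfeld-dense-orbit}. Exact interpolation is the aim; approximate interpolation within \(\varepsilon_kp^{-m_k(d-1)}\) would also suffice, since \(\lambda(p^{-m})\) expands distances by at most \(p^{m(d-1)}\).

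\textbf{Step 2: base curve.} Choose a line \(L\subset\PP^{d-1}_{\Cp}\) through \(e\) that is otherwise generic, say with direction having \(F\)-linearly independent coordinates. Then \(C=L\cap\Omega_F^{d-1}\) is a closed, smooth, connected one-dimensional quasi-Stein curve. It is \(\PP^1\) minus a compact set, not quasi-compact, and it accumulates at \(e\). Fix an increasing affinoid exhaustion \(C_1\subset C_2\subset\cdots\) of \(C\) and an exhaustion \(\Omega_1\subset\Omega_2\subset\cdots\) of \(\Omega_F^{d-1}\) by affinoids, namely complements of tubes of radius \(|p|^n\) around the \(F\)-rational hyperplanes. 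Choose parameter points \(t_k\in C_{k+1}\setminus C_k\) with \(t_k\to e\). Choose \(m_k\) so large that \(w_k\) is as close to \(t_k\) as needed; both points lie near \(e\) in the affine chart.

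\textbf{Step 3: interpolation.} I will seek \(\psi=\iota+h\colon C\to\mathbf A^{d-1}\), where \(\iota\) is the inclusion and \(h=(h_0,\dots,h_{d-2})\) has coordinates in \(\mathcal O(C)\), such that:
\begin{itemize}
\item \(\psi(t_k)=w_k\) for all \(k\);
\item \(\|h\|_{C_k}<\varepsilon_k\), with \(\varepsilon_k\) chosen so that the \(\varepsilon_k\)-neighbourhood of \(\iota(C_k)\) lies in \(\Omega_F^{d-1}\), the map \(\psi|_{C_k}\) remains a closed immersion into a fixed affinoid, and \(\psi(C\setminus C_k)\) avoids \(\Omega_{j(k)}\) for some \(j(k)\to\infty\).
\end{itemize}
I will construct \(h\) as a convergent series \(\sum_k h^{(k)}\), using Runge approximation and Mittag-Leffler on the quasi-Stein curve \(C\) (Kiehl's Theorems A and B and density of \(\mathcal O(C)\) in \(\mathcal O(C_k)\)). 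Each correction \(h^{(k)}\) vanishes at \(t_1,\dots,t_{k-1}\), satisfies \(h^{(k)}(t_k)=w_k-\psi_{k-1}(t_k)\), and has sup norm less than \(2^{-k}\varepsilon_{k-1}\) on \(C_{k-1}\). This is possible because \(t_k\notin C_{k-1}\) and the required value is already small once \(m_k\) is large. Then \(A=\psi(C)\) will be a closed, smooth, connected, irreducible, non-quasi-compact curve in \(\Omega_F^{d-1}\). Properness of \(\psi\) and injectivity of the immersion follow from the uniform smallness of \(h\) on the exhaustion, and irreducibility from the connectedness and smoothness of \(C\).

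\textbf{Main obstacle.} The hard part is keeping \(\psi(C)\) inside \(\Omega_F^{d-1}\) near \(e\), where the curve approaches the boundary and the allowed perturbation sizes shrink to zero. On each compact \(C_k\) an open neighbourhood argument works. The danger lies in the annuli \(C_{k+1}\setminus C_k\), where \(h\) is not yet controlled by the \(k\)-th estimate. To handle this, I would impose the smallness condition on \(h^{(k)}\) on a slightly larger affinoid \(C_k'\supset C_k\) that still avoids \(t_k\). I would then use the genericity of \(L\): the distance of \(L\) to each \(F\)-rational hyperplane has an explicit lower bound in terms of the distance to \(e\). This bound lets me choose \(\varepsilon_k\) decaying slowly enough to remain compatible with interpolation at \(t_k\), while still forcing \(\psi(t)\in\Omega_F^{d-1}\) for every \(t\).
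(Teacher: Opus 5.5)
There is a genuine gap in Step~3, and it is structural: the perturbative requirements you impose cannot hold together with the interpolation. Write $L=\{su\}$ in your affine chart and $t_k=s_ku$. Each coordinate hyperplane $\{X_i=0\}$ is $F$-rational. So a sup-norm ball of radius $\varepsilon$ about a point $y$ lies in $\Omega_F^{d-1}$ only if $\varepsilon\le\min_i|y_i|$, and then every point $y'$ of the ball has $|y'_i|=|y_i|$ for all $i$. Since $t_k\in C_{k+1}$, your second bullet for $C_{k+1}$ together with $\psi(t_k)=w_k$ puts $w_k$ in such a ball around $t_k$. Hence $|w_{k,i}|=|s_k|\,|u_i|$ for every $i$. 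Comparing the first two coordinates gives $|p|^{m_k}|z_{k,0}/z_{k,1}|=|u_0/u_1|$; this uses $d\ge3$ and $z_{k,0}z_{k,1}u_0u_1\neq0$. This relation forces $|z_{k,0}/z_{k,1}|\to\infty$ as $m_k\to\infty$, which is impossible for a dense list of targets. Geometrically, $\lambda(p^{m_k})$ contracts the coordinates at different rates, while points of $L$ near $e$ have all coordinates of comparable size. So for large $m_k$ you get $|w_k-t_k|\ge\min_i|s_ku_i|$, which is at least the distance from $t_k$ to the deleted hyperplanes. Thus $w_k$ is never ``as close to $t_k$ as needed'', and near every interpolation point the curve must move by at least its distance to the boundary. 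Moreover $\psi_{k-1}(t_k)$ is not controlled at all, since each $h^{(j)}$ is bounded only on $C_{j-1}$.

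The remedy in your last paragraph rests on a false estimate. The line $L$ meets $F$-rational hyperplanes at points accumulating at $e$: for nonzero $a\in F^{d-1}$, the point $su$ lies on $\{\sum_{i\le d-2}a_iX_i=X_{d-1}\}$ when $s\sum_i a_iu_i=1$, and replacing $a$ by $p^{-N}a$ gives such $s\to0$. So there is no lower bound on the distance to the hyperplanes in terms of the distance to $e$. The properness and closed-immersion claims, which you derive from uniform smallness of $h$, are likewise unsupported.

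The missing idea is that you never need to force a parametrized curve into $\Omega_F^{d-1}$. The paper works on all of $\mathbf G_{m,\Cp}^{\an}$ with a graph over the first affine coordinate, $\gamma(x)=[x:f_2(x):\cdots:f_{d-1}(x):1]$. It fixes a dense sequence $u_n=[v_{1,n}:\cdots:v_{d-1,n}:1]$ in $\Omega_F^{d-1}(\Cp)$ and a uniformizer $\varpi$ of $F$. It then takes $x_n=\varpi^{(d-1)N_n}v_{1,n}\to0$ and uses Kiehl's Theorem~B to interpolate $f_i(x_n)=\varpi^{(d-i)N_n}v_{i,n}$. Now $\gamma(x_n)=\lambda(\varpi^{N_n})u_n$ lies in $\Omega_F^{d-1}$ automatically by $\PGL_d(F)$-stability, and one sets $A=\gamma(\gamma^{-1}(\Omega_F^{d-1}))$.

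Closedness and smoothness come for free: a graph is a closed immersion into the open set where the first and last homogeneous coordinates are nonzero, and this set contains $\Omega_F^{d-1}$. Connectedness, and hence irreducibility, holds because $\gamma^{-1}(\Omega_F^{d-1})^{\Ber}$ is the Berkovich projective line minus $\{0,\infty\}$ and a set of type~I points. These points are endpoints of the tree, so arcs between the remaining points avoid them. Non-quasi-compactness follows from $x_n\to0$. In short, replace your base curve $L\cap\Omega_F^{d-1}$ by the whole torus and intersect with $\Omega_F^{d-1}$ only at the end; then no estimates are needed.
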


To construct \(A\), we first find analytic functions on \(\mathbf G_{m,\Cp}^{\an}\) with prescribed values at a locally finite sequence of points.
\begin{lemma}
\label{lem:drinfeld-interpolation}
Let \((x_n)_{n\geq1}\) be a sequence of distinct points in \(\Cp^\times\) such that every strict closed annulus in \(\mathbf G_{m,\Cp}^{\an}\) contains only finitely many \(x_n\).
For every sequence \((b_n)_{n\geq1}\) in \(\Cp\), there exists \(f\in\mathcal O(\mathbf G_{m,\Cp}^{\an})\) such that \(f(x_n)=b_n\) for all \(n\geq1\).
\end{lemma}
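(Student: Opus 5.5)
We will prove \Cref{lem:drinfeld-interpolation} by a Mittag-Leffler style construction: build a convergent series of Laurent-type corrections, each supported (in the sense of vanishing) on the previously handled points, and converging on every closed annulus. Order the points by "distance from the ends": since each closed annulus \(\{p^{-k}\le|z|\le p^k\}\) (with \(k\) ranging over positive integers, these exhaust \(\mathbf G_{m,\Cp}^{\an}\)) contains only finitely many \(x_n\), we may assume after reindexing that \(\max(|x_n|,|x_n|^{-1})\) is nondecreasing and tends to \(\infty\). Let \(A_k\) denote the annulus above, with sup norm \(\|\cdot\|_k\); then \(\mathcal O(\mathbf G_{m,\Cp}^{\an})\) is the Fréchet space obtained as the limit of the affinoid algebras \(\mathcal O(A_k)\), so a series \(\sum h_n\) converges to an analytic function as soon as \(\sum_n\|h_n\|_k<\infty\) for each \(k\) (indeed it suffices that \(\|h_n\|_k\to0\), by ultrametricity).

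Inductively define \(f_N=\sum_{n\le N}h_n\) with \(f_N(x_m)=b_m\) for \(m\le N\). Given \(f_{N-1}\), set \(P_{N}(z)=\prod_{m<N}(z-x_m)\) and look for \(h_N=c_N\,z^{e_N}P_N(z)\) with \(c_N\in\Cp\) and \(e_N\in\mathbb Z\). Then \(h_N(x_m)=0\) for \(m<N\), and since \(x_N\ne0\) and the points are distinct, \(P_N(x_N)\ne0\), so \(c_N\) is uniquely determined by \(h_N(x_N)=b_N-f_{N-1}(x_N)\). The freedom is the exponent \(e_N\): we choose it so that \(h_N\) is small on the annulus \(A_{k_N}\), where \(k_N\) is the largest integer with \(x_N\notin A_{k_N}\) (so \(k_N\to\infty\)). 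Suppose \(|x_N|>p^{k_N}\) (the other case is symmetric, using negative exponents). On \(A_{k_N}\) we have \(|z^{e}|\le p^{k_N e}\) for \(e\ge 0\), while \(|x_N^{e}|=|x_N|^e\), so
\[
\|h_N\|_{k_N}\le |h_N(x_N)|\cdot\Bigl(\frac{p^{k_N}}{|x_N|}\Bigr)^{e}\cdot\frac{\|P_N\|_{k_N}}{|P_N(x_N)|}.
\]
Because \(p^{k_N}/|x_N|<1\), taking \(e=e_N\) large makes this \(\le p^{-N}\). If \(|x_N|<p^{-k_N}\) we take \(e_N\) very negative instead.

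Then for each fixed \(k\), all but finitely many \(N\) have \(k_N\ge k\), hence \(\|h_N\|_k\le\|h_N\|_{k_N}\le p^{-N}\), so \(\sum h_N\) converges in every \(\mathcal O(A_k)\) and defines \(f\in\mathcal O(\mathbf G_{m,\Cp}^{\an})\). Since \(h_N(x_m)=0\) for \(N>m\) and evaluation at \(x_m\) is continuous, \(f(x_m)=f_m(x_m)=b_m\). The main obstacle is only the bookkeeping in the norm estimate, namely ensuring that the ratio \(p^{k_N}/|x_N|\) is strictly less than \(1\), which forces the choice of \(k_N\) relative to \(x_N\), and handling the two ends \(0\) and \(\infty\) of the annulus separately. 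The hypothesis of finitely many points per strict closed annulus is exactly what gives \(k_N\to\infty\).
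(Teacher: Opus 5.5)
Your argument is correct, and it takes a genuinely different route from the paper.

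\textbf{The paper's route.} The paper argues cohomologically. It shows that $\mathbf G_{m,\Cp}^{\an}$ is quasi-Stein via the exhaustion by the annuli $\{|p|^m\le|T|\le|p|^{-m}\}$. It glues the local ideals generated by $\prod_{x_n\in X_m}(T-x_n)$ into the coherent ideal sheaf $\mathcal I_D$ of a closed reduced divisor $D$. Kiehl's Theorem~B then gives $H^1(X,\mathcal I_D)=0$, hence surjectivity of $\mathcal O(X)\to\mathcal O(D)=\prod_n\Cp$.

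\textbf{Your route.} You build $f$ explicitly as a Newton-type series $\sum_N c_N z^{e_N}\prod_{m<N}(z-x_m)$. The Laurent monomial $z^{e_N}$, with $e_N\gg0$ or $e_N\ll0$ according to which end $x_N$ lies toward, makes each correction at most $p^{-N}$ on the annulus $A_{k_N}$ that misses $x_N$. The estimate is legitimate because the target value $b_N-f_{N-1}(x_N)$ and the polynomial $P_N$ are fixed before $e_N$ is chosen. The hypothesis gives $k_N\to\infty$, so the series converges in every $\mathcal O(A_k)$. Compatibility of the limits under restriction together with continuity of evaluation then finishes the proof.

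\textbf{What each buys.} Your approach is elementary and self-contained: it avoids coherent sheaf theory on quasi-Stein spaces and Theorem~B, and it gives explicit norm control on the interpolating function. The paper's approach is shorter given the machinery and more flexible. It applies verbatim to any quasi-Stein space and any closed discrete analytic subspace. This is exactly how the paper reuses it in \Cref{lem:qp-kiehl-evaluation} for the open disc over $\mathbb Q_p$. There the nodes are non-rational closed points with residue fields $L_j$, and your method would need an extra Chinese-remainder step modulo minimal polynomials at each stage.

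\textbf{Two cosmetic remarks.}
\begin{enumerate}
\item The reindexing is unnecessary. You only use that each $A_k$ contains finitely many $x_N$.
\item For the finitely many $N$ with $x_N\in A_1$, $k_N\le 0$, so $A_{k_N}$ is not one of your annuli (it is empty when $|x_N|=1$). Simply take $e_N=0$ there; this does not affect convergence.
\end{enumerate}
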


\begin{proof}
A rigid analytic space $X$ is called \emph{quasi-Stein} if it has an admissible exhaustion \(X=\bigcup_{n\geq1}X_n\) by affinoid domains with \(X_n\subset X_{n+1}\), such that each restriction map \(\mathcal O(X_{n+1})\to\mathcal O(X_n)\) has dense image. Fix a sequence \((b_n)_{n\geq1}\) in \(\Cp\).
Let \(X=\mathbf G_{m,\Cp}^{\an}\) with coordinate \(T\).
The strict closed annuli
\[
X_m=\{\,|p|^m\leq |T|\leq |p|^{-m}\,\},
\quad m\geq1,
\]
form an admissible affinoid exhaustion of \(X\). Laurent polynomials are dense in \(\mathcal O(X_m)\) and extend to \(X_{m+1}\), so each restriction map \(\mathcal O(X_{m+1})\to\mathcal O(X_m)\) has dense image.
Thus \(X\) is quasi-Stein.

On each \(X_m\), the finitely many points \(x_n\in X_m\) define a reduced divisor whose ideal is generated by \(\prod_{x_n\in X_m}(T-x_n)\).
These ideals are compatible under restriction, since the factors corresponding to points outside \(X_m\) are units on \(X_m\). So they glue and define a closed reduced analytic divisor \(i:D\hookrightarrow X\) with coherent ideal sheaf \(\mathcal I_D\).

Kiehl's Theorem B \cite[Satz~2.4]{Kiehl1967} gives \(H^1(X,\mathcal I_D)=0\).
The long exact cohomology sequence associated with
\[
0\rightarrow\mathcal I_D
\rightarrow\mathcal O_X
\rightarrow i_*\mathcal O_D
\rightarrow0
\]
then shows that \(\mathcal O(X)\to\mathcal O(D)\) is surjective.
Since \(D\) is the locally finite disjoint union of the reduced \(\Cp\)-points \(x_n\), we have \(\mathcal O(D)=\prod_{n\geq1}\Cp\).
Hence \((b_n)_{n\geq1}\) lifts to an analytic function \(f\in\mathcal O(X)\), which satisfies \(f(x_n)=b_n\) for every \(n\geq1\).
\end{proof}

Choose \(f_2,\ldots,f_{d-1} \in\mathcal O(\mathbf G_{m,\Cp}^{\an}).\) Define
\begin{equation*}
 \gamma:\mathbf G_{m,\Cp}^{\an}
       \to\mathbf P_{\Cp}^{d-1,\an},
 \quad
 x\mapsto[x:f_2(x):\cdots:f_{d-1}(x):1],
\end{equation*}
and let \(C_0=\gamma^{-1}(\Omega_F^{d-1})\subset\mathbf G_{m,\Cp}^{\an}\) with its induced open analytic structure.  Write \(A=\gamma(C_0)\subset\Omega_F^{d-1}.\)
\begin{proposition}
\label{prop:drinfeld-graph-geometry}
  
If $C_0$ is nonempty, then $A$ is a closed smooth connected irreducible analytic curve in $\Omega_F^{d-1}$. If $C_0$ contains a sequence $x_n\to0$ in $\mathbf A_{\Cp}^{1,\an}$, then $A$ is not quasi-compact.
\end{proposition}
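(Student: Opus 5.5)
The plan is to view \(\gamma\) as a graph-type closed immersion and transport its properties to \(A\). On the affine chart \(X_{d-1}\neq0\) of \(\PP^{d-1}\), with affine coordinates \(z_i=X_{i-1}/X_{d-1}\) for \(1\le i\le d-1\), the map \(\gamma\) is
\[
x\mapsto\bigl(x,f_2(x),\ldots,f_{d-1}(x)\bigr).
\]
Its image lies in the closed subspace \(\{z_1\neq0\}\cap\{z_i=f_i(z_1),\ 2\le i\le d-1\}\) of the chart \(\mathbf A^{d-1,\an}\setminus\{z_1=0\}\). This is the graph of \((f_2,\ldots,f_{d-1})\) over \(\mathbf G_m^{\an}\), and it is isomorphic to \(\mathbf G_m^{\an}\) via the first coordinate.

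Since \(\Omega_F^{d-1}\) avoids every \(F\)-rational hyperplane, it lies in \(\{X_{d-1}\neq0\}\cap\{X_0\neq0\}\). Hence
\[
A=\Gamma\cap\Omega_F^{d-1},
\]
where \(\Gamma\) is the closed graph inside the open set \(U=\{X_0X_{d-1}\neq0\}\), and \(\Gamma\) is defined there by the equations \(z_i-f_i(z_1)=0\). Being closed in \(U\supset\Omega\), the set \(A\) is closed in \(\Omega_F^{d-1}\). Moreover \(\gamma|_{C_0}:C_0\to A\) is an isomorphism onto this closed analytic subspace, with inverse given by \(z_1\). Consequently \(A\) is smooth of dimension one, because \(C_0\) is an open subspace of the smooth curve \(\mathbf G_m^{\an}\) and is nonempty by hypothesis.

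The main obstacle is connectedness and irreducibility of \(C_0\), i.e. of \(\mathbf G_m^{\an}\) minus the preimages of the \(F\)-rational hyperplanes. I would argue as follows.
\begin{enumerate}[label=(\roman*)]
\item Each hyperplane \(H\) pulls back to the zero set of the function \(h_H=a_0x+\sum_i a_{i-1}f_i+a_{d-1}\), which is either identically zero or a discrete set of classical points; identically zero is impossible once \(C_0\neq\varnothing\).
\item By the building retraction, \(\Omega\) is a complement of a compact family. Concretely, the points of \(\mathbf G_m^{\an}\) removed form a closed subset \(Z\) of \(\mathbf G_m^{\Ber}\) consisting of type I points. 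Indeed, a non-classical point \(x\) cannot satisfy \(|h_H(x)|=0\), since \(h_H\) is a nonzero analytic function and hence vanishes only at classical points.
\item Removing a closed set of type I points from a connected smooth Berkovich curve leaves it path-connected, since paths can be routed through type II/III points. So \(C_0^{\Ber}\) is connected, hence \(C_0\) is connected; being smooth and connected, it is irreducible (its normalization is itself).
\end{enumerate}
Thus \(A\cong C_0\) is connected and irreducible.

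For the last claim, I would suppose \(A\) is quasi-compact. Then \(A^{\Ber}\) is compact by \Cref{prop:berkovich-compactness}, and so is its image under the continuous coordinate \(z_1\), which is \(C_0^{\Ber}\subset\mathbf G_m^{\Ber}\). This image is then compact in \(\mathbf A^{1,\Ber}\) and misses \(0\), so it is bounded away from \(0\): \(|z_1|\ge\epsilon>0\) on it. But the points \(x_n\in C_0\) satisfy \(x_n\to0\), a contradiction. Hence \(A\) is not quasi-compact.
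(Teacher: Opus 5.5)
Your proposal is correct and follows essentially the same route as the paper. You realize $\gamma$ as a graph, i.e.\ a closed immersion into the open set where the first and last homogeneous coordinates are nonzero; you identify $C_0^{\Ber}$ as $\mathbf G_m^{\Ber}$ minus a set of type I points, which stays arc-connected because type I points can only be endpoints of arcs; and you deduce non-quasi-compactness from compactness of $C_0^{\Ber}$ against $x_n\to0$. The differences are cosmetic: you use the identity principle directly where the paper cites Conrad's Zariski-density lemma, and you use a positive minimum of $|z_1|$ on a compact set where the paper uses closedness in the Hausdorff Berkovich line.
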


\begin{proof}
Let \(U=\{X_1X_d\neq0\}\subset\mathbf P_{\Cp}^{d-1,\an}.\)
Then $\Omega_F^{d-1}\subset U$, and \(U\simeq\mathbf G_{m,\Cp}^{\an} \times\mathbf A_{\Cp}^{d-2,\an}.\)
Under this identification, $\gamma$ is the graph of $(f_2,\ldots,f_{d-1})$, hence is a closed immersion into $U$.  Therefore $A$ is the base change of this graph along $\Omega_F^{d-1}\hookrightarrow U$.  It is a closed analytic subspace of $\Omega_F^{d-1}$, and $C_0\to A$ is an isomorphism. Since \(C_0\) is an open analytic subspace of the smooth curve \(\mathbf G_m^{\an}\), both \(A\) and \(C_0\) are smooth curves.

For $a=(a_1,\ldots,a_d)\in F^d\setminus\{0\}$, the pullback of the $F$-rational hyperplane $a_1X_1+\cdots+a_dX_d=0$ is the zero locus of \(g_a(x)=a_1x+\sum_{i=2}^{d-1}a_if_i(x)+a_d.\)
Choose $x_0\in C_0$.  Since $\gamma(x_0)$ does not lie on $F$-rational hyperplanes, $g_a$ is not identically zero. Since $\mathbf G_{m,\Cp}^{\an}$ is connected and normal,  by \cite[Lemma~2.1.4]{ConradIrreducible}, every nonempty admissible open subset is Zariski dense. Hence $g_a$ does not vanish identically on any nonempty annulus. Its zero set on each quasi-compact annulus is either empty or a zero-dimensional affinoid and is a finite set of type I Berkovich points. Since \(\Omega_F^{d-1}\) is the complement of the \(F\)-rational hyperplanes, we have
\begin{equation}
 C_0^{\Ber}
 =(\mathbf P_{\Cp}^{1,\an})^{\Ber}
   \setminus\bigl(S\cup\{0,\infty\}\bigr),
\label{eq:drinfeld-complement-endpoints}
\end{equation}
where \(S\subset(\mathbf G_{m,\Cp}^{\an})^{\Ber}\) is a possibly infinite set of type I points.
The Berkovich projective line is uniquely arc-connected, and its type I points are endpoints \cite[\S1.4.4 and Theorem~4.2.1]{BerkovichSpectral}.
For any two distinct points of \(C_0^{\Ber}\), the arc joining them cannot meet \(S\cup\{0,\infty\}\), since an endpoint cannot lie in the interior of an arc.
Thus this arc lies entirely in \(C_0^{\Ber}\).
It follows that \(C_0^{\Ber}\) is path connected, so \(C_0\) is connected. The space $C_0$ is smooth and hence normal.  In Conrad's definition, the irreducible components are the images of the connected components of the normalization \cite[Definition~2.2.2 and Theorem~2.2.4(1)]{ConradIrreducible}, so connectedness gives only one component. Thus $C_0$ and $A$ are irreducible.

Suppose now that $C_0$ contains a sequence $x_n\to0$.  If $C_0$ were quasi-compact, then $C_0^{\Ber}$ would be compact.  Its image in the Hausdorff Berkovich projective line would be closed, so it would contain $0$.  This is impossible because \(C_0\subset\mathbf G_{m,\Cp}^{\an}\). Since \( A\) is isomorphic to \(C_0\), it is not quasi-compact either.
\end{proof}

\begin{proof}[Proof of \Cref{thm:drinfeld-nonqc-counterexample}]
Every classical point in $\Omega_F^{d-1}$ has a unique expression \([v_1:\cdots:v_{d-1}:1].\) Since \(\Cp\) has a countable dense subset in the classical topology, so does \(\Omega_F^{d-1}(\Cp)\). Choose a dense sequence
\[
 u_n=[v_{1,n}:\cdots:v_{d-1,n}:1]
       \in\Omega_F^{d-1}(\Cp).
\]
The hyperplane $X_1=0$ is also deleted, so $v_{1,n}\neq0$.

Fix a uniformizer $\varpi$ of $F$.  Choose integers $N_n\to\infty$ such that \(x_n=\varpi^{(d-1)N_n}v_{1,n}\in\Cp^\times\) have strictly decreasing absolute values and tend to $0$. For example, choose \(N_1\) so that \(|x_1|<|\varpi|\).
For each \(n\geq2\) choose \(N_n>N_{n-1}\) such that \(|x_n|<\min\{|x_{n-1}|,|\varpi|^n\}\). The sequence is locally finite on $\mathbf G_{m,\Cp}^{\an}$.  By \Cref{lem:drinfeld-interpolation}, for each $2\leq i\leq d-1$ there is an $f_i\in\mathcal O(\mathbf G_{m,\Cp}^{\an})$ such that \(f_i(x_n)=\varpi^{(d-i)N_n}v_{i,n}.\)
Write $\gamma$, $C_0$, and $A$ as in \Cref{prop:drinfeld-graph-geometry}. Let $q_n=\varpi^{-N_n}$.  Then \(\lambda(q_n)\gamma(x_n)=u_n.\)
Since $\Omega_F^{d-1}$ is $\PGL_d(F)$-stable, $\gamma(x_n)=\lambda(q_n)^{-1}u_n$ belongs to $\Omega_F^{d-1}$.  Hence $x_n\in C_0$.  The proposition shows that $A$ is closed smooth connected irreducible and non-quasi-compact. The density of $(u_n)$ then gives \eqref{eq:drinfeld-dense-orbit}.
\end{proof}

\section{An open disc with dense orbit}
\label{sec:qp-gauss-counterexample}

Let
\[
 \mathbb D^-_{\mathbb Q_p}
 =\{x\in\mathbf A_{\mathbb Q_p}^{1,\an}:|T(x)|<1\}
\]
be the rigid open unit disc.  We construct a map from \(\mathbb D^-_{\mathbb Q_p}\) defined over $\mathbb Q_p$ whose $\PGL_3(\mathbb Q_p)$-orbit is dense after base change to $\Cp$.  We write $v$ for the valuation on $\Cp$ normalized by $v(p)=1$.

\begin{theorem}
\label{thm:qp-disc-dense-orbit}
There exist power series \(A(T),B(T)\in\mathbb Q_p[[T]]\) which converge for every $|T|<1$, and an analytic morphism
\[
 f:\mathbb D^-_{\mathbb Q_p}\to\mathbf P^2_{\mathbb Q_p},
 \quad f(T)=[A(T):B(T):1],
\]
such that
\begin{equation}
 \overline{
   \PGL_3(\mathbb Q_p)f(\mathbb D^-_{\mathbb Q_p}(\Cp))
 }^{\,\mathrm{cl}}
 =\mathbf P^2(\Cp).
 \label{eq:qp-disc-dense-orbit}
\end{equation}
Both $A$ and $B$ are unbounded on the open disc.  Consequently the two affine coordinates do not extend to functions on $\operatorname{Spf}\mathbb Z_p[[T]]$ after multiplication by any common power of $p$. Here
$\mathbb Z_p[[T]]$ carries its $(p,T)$-adic topology.
\end{theorem}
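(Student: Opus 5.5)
We mirror the proof of \Cref{thm:drinfeld-nonqc-counterexample}, but with power series over $\mathbb Q_p$ on the open disc in place of interpolation over $\Cp$ on $\mathbf G_m$. Interpolation at arbitrary $\Cp$-points cannot be done with $\mathbb Q_p$-coefficients, because $f(x)$ must then be Galois-compatible. We therefore use the freedom in $\PGL_3(\mathbb Q_p)$ and only approximate targets. Fix a countable dense sequence $u_n=[v_n:w_n:1]$ in the affine chart of $\mathbf P^2(\Cp)$; this chart is dense, so it suffices to make every $u_n$ a limit point of the orbit. Choose points $x_n\in\mathbb D^-(\Cp)$ with $|x_n|\to1$ and strictly increasing absolute values; for example, take $x_n=p^{1/n}$ or a suitable algebraic element of valuation $1/n$. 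Set $g_n=\operatorname{diag}(p^{a_n},p^{b_n},1)$. The goal is
\[
 |A(x_n)-p^{-a_n}v_n|\leq |p|^{n-a_n},\qquad |B(x_n)-p^{-b_n}w_n|\leq|p|^{n-b_n}.
\]
Then $g_nf(x_n)=[p^{a_n}A(x_n):p^{b_n}B(x_n):1]$ lies within $|p|^n$ of $u_n$, and density \eqref{eq:qp-disc-dense-orbit} follows. Taking $a_n,b_n\to\infty$ forces $A(x_n),B(x_n)$ to be large, so $A$ and $B$ are unbounded. The $\operatorname{Spf}$ consequence follows because elements of $p^{-k}\mathbb Z_p[[T]]$ are bounded by $|p|^{-k}$ on the open disc.

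The key step is an inductive construction of $A=\sum_k A_k$, where each $A_k$ is a polynomial in $\mathbb Q_p[T]$ of the form $c_kT^{m_k}$ or a short sum of monomials. We need three properties:
\begin{enumerate}[label=(\roman*)]
 \item $A_k$ is tiny on $|T|\leq|x_{k-1}|$, which preserves the earlier approximations;
 \item $A_k$ is of controlled size on the circle $|T|=|x_k|$;
 \item $A_k$ has tiny norm on every disc $|T|\leq r<1$ as $k\to\infty$, which gives convergence on the open disc.
\end{enumerate}
Monomials of high degree achieve (i) and (iii) simultaneously. If $|c_k||x_k|^{m_k}=R$, then $|c_k||x_{k-1}|^{m_k}=R(|x_{k-1}|/|x_k|)^{m_k}$, which tends to $0$ as $m_k\to\infty$. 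This separation of radii is exactly where the open disc is used.

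For (ii) we must hit a prescribed value $p^{-a_n}v_n$, up to a relative error, by a $\mathbb Q_p$-polynomial evaluated at $x_n$. To make this possible we choose $x_n$ so that $\mathbb Q_p(x_n)$ is large. Take $x_n$ of valuation $1/N$ for highly ramified $N$, and let the residual parts of the powers $x_n^{j}$ generate enough. Even so, $\mathbb Q_p(x_n)$ is only a finite extension, while $v_n$ is arbitrary in $\Cp$. The saving point is that we only need approximation to precision $|p|^n$ relative to $|p|^{-a_n}$, and the set $\overline{\mathbb Q}_p$ is dense in $\Cp$. So we first replace $v_n,w_n$ by algebraic approximants. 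We then choose $x_n$ such that $\mathbb Q_p(x_n)$ contains $\mathbb Q_p(v_n,w_n)$, using a uniformizer-type generator of a suitable totally ramified extension composed with an unramified part; its valuation tends to $0$, so $|x_n|\to1$. Finally we write $p^{-a_n}v_n=\sum_{j<e}c_jx_n^{j}$ with $c_j\in\mathbb Q_p$.

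\textbf{Main obstacle.} The hardest part is making this representation compatible with the tail estimates. Set $A_n=T^{m_n}\sum_j c_j'T^{j}$, where the $c_j'$ solve the equation $A_n(x_n)=$ the target minus the previous partial sum evaluated at $x_n$. The coefficients $c_j'$ can be large, since the basis $x_n^j$ is far from orthogonal. The integers $a_n$, $m_n$ and the later terms must be chosen so that all later contributions at $x_n$ are below $|p|^{n-a_n}$. I would try first to take $m_{k}$ growing fast enough relative to the coefficient sizes $\max_j|c_j'|$ at every stage $k>n$. Using only radii strictly between $|x_n|$ and $|x_{n+1}|$ in the estimates, this should guarantee simultaneously that later terms are negligible at $x_n$ and that the series converges on each closed subdisc. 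The term $B$ is built identically and independently.
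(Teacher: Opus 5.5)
Your route is correct, and it differs from the paper's in how the interpolation is done.

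\textbf{The paper's route.} The paper gets exact interpolation in one step. The open disc is quasi-Stein, the closed points defined by the $t_j$ form a locally finite reduced subspace, and Kiehl's Theorem~B makes $\mathcal O(\mathbb D^-_{\mathbb Q_p})\to\prod_j L_j$ surjective. The paper prescribes \emph{small} values $A(t_j)=p^{N_j}a_j$, so $\operatorname{diag}(p^{-N_j},p^{-N_j},1)$ returns exactly $[a_j:b_j:1]$. Unboundedness then needs a separate lemma, which uses discreteness of $|\mathbb Q_p^\times|$ and the fact that the values tend to $0$ near the boundary.

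\textbf{Your route.} You build $A$ by hand as a Mittag-Leffler-type series of shifted polynomials. Each new term is solved exactly at $x_n$ against the previous partial sum, and later terms only perturb by a controlled amount. This avoids coherent cohomology altogether. Approximate hitting suffices, because the errors tend to $0$ and a dense sequence in $\Cp^2$ enters every nonempty open set infinitely often. Your \emph{large} targets $p^{-a_n}v_n$ make unboundedness immediate. Infinitely many $v_n$ satisfy $|v_n|\geq1$, and along those $|A(x_n)|=|p|^{-a_n}|v_n|\to\infty$. The generator $x_n$ you need is exactly the paper's lemma on primitive uniformizers of small valuation.

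\textbf{Closing your ``main obstacle''.} As phrased, ``take $m_k$ growing fast relative to $\max_j|c'_j|$'' is circular. The $c'_j$ are the coordinates of $\tau_k x_k^{-m_k}$ in the basis $1,x_k,\ldots,x_k^{e_k-1}$, where $\tau_k$ is the target minus the earlier partial sum at $x_k$. So they grow with $m_k$. The fix is equivalence of norms on the finite-dimensional $\mathbb Q_p$-space $\mathbb Q_p(x_k)$. There is a constant $C_k$, depending only on $x_k$, with $\max_j|c'_j|\leq C_k|\tau_k|\,|x_k|^{-m_k}$. Hence \(\sup_{|T|\leq r}|A_k|\leq C_k|\tau_k|\,(r/|x_k|)^{m_k}\) for $r<1$. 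Since $C_k$ and $\tau_k$ are fixed before $m_k$ is chosen, a large $m_k$ makes this at most \(\min\{|p|^k,\min_{n<k}|p|^{n-a_n}\}\) on $|T|\leq|x_{k-1}|$. That single choice gives both the tail bounds at all earlier $x_n$ and convergence on every closed subdisc. Your property (ii) is then unnecessary, because each later term is solved exactly against everything before it.
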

The power series \(A,B\) have coefficients in \(\mathbb Q_p\). How can their values become dense in \(\Cp^2\) after applying elements of \(\PGL_3(\mathbb Q_p)\)?
We interpolate suitable scalar multiples of a dense sequence of algebraic points in \(\Cp^2\) at closed points of the disc whose residue fields contain the prescribed values. This gives analytic functions defined over \(\mathbb Q_p\), while diagonal projective transformations recover the original dense sequence. We represent these closed points by elements $t_j\in\overline{\mathbb Q}_p$ with $|t_j|<1$ and $|t_j|\to1$. Thus every closed subdisc of radius less than one contains only finitely many of them.

\subsection{Closed points approaching the Gauss point}
\begin{lemma}
\label{lem:qp-primitive-small-valuation}
Let $E/\mathbb Q_p$ be finite and let $\epsilon>0$. There are a finite extension $L/\mathbb Q_p$ containing $E$ and an element $t\in L$ such that \(L=\mathbb Q_p(t)\) with \(0<v(t)<\epsilon.\)
The element $t$ may be chosen to be a uniformizer of $L$ and all its $\mathbb Q_p$-conjugates in $\Cp$ then have the same absolute value.
\end{lemma}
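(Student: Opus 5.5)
The plan is to take a totally ramified extension of large degree on top of \(E\) and use its uniformizer. Let \(e\) be the ramification index of \(E/\Qp\) and \(\pi_E\) a uniformizer of \(E\), so \(v(\pi_E)=1/e\). Choose an integer \(m\) with \(1/(em)<\epsilon\). Consider the Eisenstein polynomial
\[
 h(X)=X^m-\pi_E \in \mathcal O_E[X].
\]
It is irreducible over \(E\), and \(L_0=E(\varpi)\) for a root \(\varpi\) is totally ramified of degree \(m\) over \(E\). In \(L_0\) we have \(v(\varpi)=1/(em)<\epsilon\), and the ramification index of \(L_0/\Qp\) is \(em\). We take \(L=L_0\).

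The main step is to show that a uniformizer generates \(L\) over \(\Qp\), not only over \(E\). A uniformizer alone need not generate \(L\) when \(L/\Qp\) has an unramified part: if \(f=f(L/\Qp)>1\), then \(\Qp(\varpi)\) might be a proper subfield with the same ramification index. I would handle this by perturbing. Let \(\zeta\in L\) be a root of unity of order prime to \(p\) generating the maximal unramified subextension \(L^{\mathrm{ur}}\) over \(\Qp\), and put
\[
 t=\zeta\varpi .
\]
Then \(t\) is still a uniformizer of \(L\), so \(v(t)=1/(em)\in(0,\epsilon)\). Let \(K=\Qp(t)\subset L\). If \(e(K/\Qp)=em\), then \(L/K\) is unramified, and we would then have to exclude a proper unramified extension.

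A cleaner route avoids this bookkeeping. Let \(L^{\mathrm{ur}}=\Qp(\zeta)\) have degree \(f\) over \(\Qp\), so \(L/L^{\mathrm{ur}}\) is totally ramified of degree \(e'=em\). Choose any uniformizer \(\varpi'\) of \(L\), with Eisenstein minimal polynomial \(P\) over \(L^{\mathrm{ur}}\). For a uniformizer \(t\), the degree \([\Qp(t):\Qp]\) equals the number of distinct \(\Qp\)-conjugates of \(t\). These conjugates are obtained by applying the \(f e'\) embeddings of \(L\) into \(\Cp\).

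I would take \(t=\varpi'(1+\zeta\varpi')\), or simply \(t=\varpi'+\zeta\varpi'^2\). This is a uniformizer. Two embeddings \(\sigma\ne\sigma'\) agreeing on \(L^{\mathrm{ur}}\) send \(\varpi'\) to distinct roots of \(P\), and the truncated expansions then differ. If the embeddings differ on \(\zeta\) but agree on \(\varpi'\), the \(\varpi'^2\)-coefficients differ in the residue field. The expected obstacle is the mixed case, where both the root and \(\zeta\) change and the two effects might cancel. I would control this using the valuations of differences of roots of \(P\): these are at least \(v(\varpi')\) and strictly smaller than \(2v(\varpi')\) unless one passes to higher ramification. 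If this control fails, I would fall back on the primitive element theorem: among the elements \(\varpi'+c\zeta\varpi'^2\) with \(c\in\mathbb Z_p\), all but finitely many are primitive. Only finitely many \(c\) can land in each of the finitely many proper intermediate fields, since \(\zeta\varpi'^2\notin\) such a field when \(\varpi'\) is in it. Choosing \(c\) a unit outside this finite set keeps \(t\) a uniformizer and gives \(\Qp(t)=L\). I consider this fallback the reliable path and would write it up first.

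Finally, since \(L=\Qp(t)\), each \(\Qp\)-conjugate of \(t\) in \(\Cp\) is \(\sigma(t)\) for some embedding \(\sigma\). The absolute value on \(\Cp\) restricts uniquely to every finite extension of \(\Qp\), so \(|\sigma(t)|=|t|\). Hence all conjugates have the same absolute value, as claimed.
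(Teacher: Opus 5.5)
Your proposal is correct, provided you write up the fallback you identify as the reliable path. It differs from the paper in how the primitive uniformizer is found.

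Both proofs build $L=E(\varpi)$ from the Eisenstein polynomial $X^m-\pi_E$, and both use that $L/\mathbb Q_p$ has only finitely many intermediate fields. The paper then argues topologically:
the uniformizers of $L$ form the nonempty open set $\{x\in L: v(x)=1/e(L/\mathbb Q_p)\}$;
each proper intermediate field is a proper $\mathbb Q_p$-subspace, hence closed with empty interior;
a finite union of such sets cannot contain a nonempty open set.

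You instead restrict to the explicit line $t_c=\varpi'+c\zeta\varpi'^2$, $c\in\mathbb Z_p$, all of whose members are uniformizers. On it you run the Steinitz argument from the proof of the primitive element theorem. Suppose $c_1\neq c_2$ both gave elements of a proper subfield $K$. Then $\zeta\varpi'^2\in K$, hence $\varpi'\in K$ and $\zeta\in K$, so $K\supseteq\mathbb Q_p(\zeta,\varpi')=L$. Thus each proper subfield contains at most one $t_c$. State it in this form: your phrase ``$\zeta\varpi'^2\notin K$ when $\varpi'\in K$'' is the key fact, but the subtraction step is what makes it apply.

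The paper's route is shorter and needs no structure theory of local fields. Yours is purely algebraic and exhibits an explicit family of primitive uniformizers. The cost is the input $L=\mathbb Q_p(\zeta,\varpi')$. This uses that $L^{\mathrm{ur}}=\mathbb Q_p(\zeta)$ for a prime-to-$p$ root of unity $\zeta$, and that a uniformizer generates the totally ramified extension $L/L^{\mathrm{ur}}$.

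The earlier exploratory routes ($t=\zeta\varpi$, and $t=\varpi'+\zeta\varpi'^2$ via root-difference estimates) can simply be dropped. The heuristic that differences of roots of $P$ have valuation below $2v(\varpi')$ is false under wild ramification. For $X^p-p$ over $\mathbb Q_p$ these differences have valuation $1/p+1/(p-1)>2/p$.
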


\begin{proof}
Let $e_E=e(E/\mathbb Q_p)$, choose a uniformizer $\pi_E$ of $E$, and choose an integer $m$ with \(\frac{1}{me_E}<\epsilon.\)
The polynomial $X^m-\pi_E$ is Eisenstein over $E$.  If $\varpi$ is one of its roots and $M=E(\varpi)$, then $M/E$ is totally ramified of degree $m$ and $e(M/\mathbb Q_p)=me_E$.

There are only finitely many intermediate fields between $\mathbb Q_p$ and $M$ which are controlled by the subgroups of the finite Galois group of a Galois closure.  Every proper intermediate field is a proper closed $\mathbb Q_p$-linear subspace of $M$ and has empty interior.  The set of uniformizers of $M$ is the nonempty open subset
\[
        \{x\in M:v(x)=1/e(M/\mathbb Q_p)\}
\]
so it cannot be contained in the finite union of the proper intermediate fields.  Choose a uniformizer $t$ outside that union.  Then \(\mathbb Q_p(t)=M\) and \(v(t)=\frac{1}{me_E}<\epsilon.\)
Every embedding of a finite extension of $\mathbb Q_p$ into $\Cp$ preserves the unique extension of the $p$-adic absolute value.  The final assertion follows.
\end{proof}

\begin{lemma}
\label{lem:qp-locally-finite-points}
Let \((a_j,b_j)_{j\geq1}\) be a sequence in \((\overline{\mathbb Q}_p^\times)^2\).  There are finite extensions $L_j/\mathbb Q_p$ and elements $t_j\in L_j\subset \Cp$ such that:
\begin{enumerate}
\item $a_j,b_j\in L_j$ and $L_j=\mathbb Q_p(t_j)$;
\item $0<|t_j|<1$, the numbers $|t_j|$ are strictly increasing, and $|t_j|\to1$;
\item the closed points $z_j\in\mathbb D^-_{\mathbb Q_p}$ defined by the minimal polynomial of \(t_j\) over \(\mathbb Q_p\) form a closed locally finite reduced analytic subspace \(Z=\coprod_{j\geq1}z_j\) and the residue field of each $z_j$ is $L_j$.
\end{enumerate}
\end{lemma}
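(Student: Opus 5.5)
The plan is to construct the $t_j$ inductively using \Cref{lem:qp-primitive-small-valuation}, and then to check local finiteness by hand. Set $E_j=\mathbb Q_p(a_j,b_j)$, which is finite over $\mathbb Q_p$. Suppose $t_1,\ldots,t_{j-1}$ have been chosen with $0<v(t_{j-1})<\cdots<v(t_1)$. Put
\[
\epsilon_j=\min\{v(t_{j-1}),1/j\}
\]
(with $\epsilon_1=1$). Applying \Cref{lem:qp-primitive-small-valuation} to $E_j$ and $\epsilon_j$ produces $L_j\supseteq E_j$ and a uniformizer $t_j$ of $L_j$ with $L_j=\mathbb Q_p(t_j)$ and $0<v(t_j)<\epsilon_j$. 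This gives (1) directly. It also gives (2): the valuations $v(t_j)$ are strictly decreasing and tend to $0$, so the $|t_j|$ increase strictly to $1$ while staying below $1$.

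For (3), let $P_j$ be the minimal polynomial of $t_j$ over $\mathbb Q_p$. By the last clause of \Cref{lem:qp-primitive-small-valuation}, all roots of $P_j$ in $\Cp$ have absolute value $|t_j|$. Since the $|t_j|$ are distinct, the polynomials $P_j$ are pairwise distinct, hence pairwise coprime irreducibles. Each $P_j$ cuts out a closed point $z_j$ of $\mathbb D^-_{\mathbb Q_p}$ with residue field $\mathbb Q_p[T]/(P_j)\cong\mathbb Q_p(t_j)=L_j$.

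Next, cover the open disc by the admissible exhaustion of closed discs $D_m=\{|T|\le|p|^{1/m}\}$. A point $z_j$ lies in $D_m$ exactly when $|t_j|\le|p|^{1/m}$. Because $|t_j|\to1$, only finitely many $j$ satisfy this. So on each $D_m$ I define the ideal generated by the finite product $\prod_{z_j\in D_m}P_j$. The factors $P_j$ with $z_j\notin D_m$ have all their roots outside $D_m$, so they are units on $D_m$. Hence these ideals are compatible under restriction, exactly as in the proof of \Cref{lem:drinfeld-interpolation}. They glue to a coherent ideal sheaf, which defines a closed analytic subspace $Z$.

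On each $D_m$, the quotient $\mathcal O(D_m)/(\prod P_j)$ is isomorphic to $\prod_j\mathbb Q_p[T]/(P_j)$. This uses coprimality and the Chinese remainder theorem. It also uses that $\mathcal O(D_m)/(P_j)\cong\mathbb Q_p[T]/(P_j)$, which holds because $P_j$ has all its roots in the disc and Weierstrass division applies. Each factor is the field $L_j$, so the quotient is reduced. Therefore $Z$ is reduced and is the locally finite disjoint union $\coprod_j z_j$.

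The main point needing care is this identification in (3). In particular, I must justify that the closed point defined by $P_j$ in the rigid disc over the non-algebraically closed field $\mathbb Q_p$ has residue field exactly $L_j$, rather than some larger ring. I would argue this via Weierstrass preparation on $D_m$, which reduces the question to the polynomial algebra $\mathbb Q_p[T]/(P_j)$. Everything else is a direct consequence of the inductive choice.
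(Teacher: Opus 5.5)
Your proposal is correct and follows the same route as the paper. You apply \Cref{lem:qp-primitive-small-valuation} successively for (1)--(2), and for (3) you use that all conjugates of $t_j$ have absolute value $|t_j|$ together with the exhaustion $\{|T^m|\le|p|\}$ to get local finiteness. You also supply details the paper only asserts, namely the explicit choice $\epsilon_j=\min\{v(t_{j-1}),1/j\}$ and the gluing of the product ideals with the Chinese remainder theorem and Weierstrass division to see that $Z$ is reduced with residue fields $L_j$, and these details are sound.
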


\begin{proof}
Choose a finite extension $E_j/\mathbb Q_p$ containing $a_j$ and $b_j$.
Apply \Cref{lem:qp-primitive-small-valuation} successively to obtain $L_j\supset E_j$ and primitive uniformizers $t_j$ whose positive valuations strictly decrease to zero. This proves the first two assertions.

The kernel of the map \(\mathbb Q_p[T]\to L_j\) sending \(T\) to \(t_j\) is generated by the minimal polynomial of $t_j$, so the closed point has residue field $L_j$.  All roots of that polynomial in $\Cp$ have absolute value $|t_j|$.
Hence every member of the cofinal strict-affinoid exhaustion \(\{|T^m|\leq|p|\}_{m\geq1} \) meets only finitely many $z_j$.  Their union is locally finite.  Since the open disc is smooth of dimension one, each $z_j$ is an effective Cartier divisor.
A locally finite union of the closed points \(z_j\) is again a closed reduced analytic subspace with a coherent ideal sheaf.
\end{proof}

\subsection{Quasi-Stein interpolation and the dense orbit}

The open disc is quasi-Stein, since it is exhausted by the strict rational affinoids \(X_m=\{\,|T^m|\leq |p|\,\},\) and  polynomials are dense in $\mathcal O(X_m)$ and extend to $X_{m+1}$.

\begin{lemma}
\label{lem:qp-kiehl-evaluation}
For the subspace $Z=\coprod_jz_j$ in \Cref{lem:qp-locally-finite-points}, the evaluation map
\[
 \mathcal O(\mathbb D^-_{\mathbb Q_p})
   \to
 \mathcal O(Z)=\prod_{j\geq1}L_j
\]
is surjective.
\end{lemma}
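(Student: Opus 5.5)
The plan is to copy the proof of \Cref{lem:drinfeld-interpolation}, now over \(\mathbb Q_p\) on the open disc. Put \(X=\mathbb D^-_{\mathbb Q_p}\). As noted just above, \(X\) is quasi-Stein with exhaustion \(X_m=\{|T^m|\leq|p|\}\). By \Cref{lem:qp-locally-finite-points}, \(Z\subset X\) is a closed reduced analytic subspace with coherent ideal sheaf \(\mathcal I_Z\), so there is an exact sequence
\[
0\rightarrow\mathcal I_Z\rightarrow\mathcal O_X\rightarrow i_*\mathcal O_Z\rightarrow0 .
\]
Kiehl's Theorem B \cite[Satz~2.4]{Kiehl1967} holds for quasi-Stein spaces over any complete nonarchimedean field, here \(\mathbb Q_p\). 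It gives \(H^1(X,\mathcal I_Z)=0\), and the long exact sequence then shows that \(\mathcal O(X)\to\mathcal O(Z)\) is surjective.

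It remains to identify \(\mathcal O(Z)\). The key points are the following.
\begin{enumerate}
\item Each \(X_m\) meets only finitely many \(z_j\). The ideal of \(Z\cap X_m\) is generated by the product of the minimal polynomials \(P_j\) of the \(t_j\) lying in \(X_m\). The \(P_j\) for \(z_j\notin X_m\) have no zeros on \(X_m\), so they are units there, and these ideals are compatible.
\item Since the \(P_j\) are pairwise coprime, the Chinese remainder theorem gives
\[
\mathcal O(Z\cap X_m)=\prod_{z_j\in X_m}\mathbb Q_p[T]/(P_j)=\prod_{z_j\in X_m}L_j .
\]
\item Passing to the inverse limit over \(m\), using that \(\mathcal O_Z\) is a sheaf for the admissible covering \(\{Z\cap X_m\}\), gives \(\mathcal O(Z)=\prod_{j}L_j\).
\end{enumerate}
Under this identification, the evaluation map \(h\mapsto (h(t_j))_j\) is the restriction map \(\mathcal O(X)\to\mathcal O(Z)\). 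The claimed surjectivity follows.

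The one point I expect to need care is the base field. Theorem B must be invoked over \(\mathbb Q_p\) rather than \(\Cp\), and the exhaustion by the \(X_m\) must satisfy Kiehl's hypotheses over \(\mathbb Q_p\). The paper has already recorded both: density of polynomials in \(\mathcal O(X_m)\), and the fact that \(X_m\) is relatively compact in \(X_{m+1}\). Alternatively, one can base change to \(\Cp\), interpolate there, and average over Galois conjugates. Since the \(L_j\) are finite over \(\mathbb Q_p\), I would not do this and would simply rely on Kiehl over \(\mathbb Q_p\).
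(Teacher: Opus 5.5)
Your proposal is correct and is essentially the paper's proof. Kiehl's Theorem~B on the quasi-Stein open disc over \(\mathbb Q_p\) gives \(H^1(\mathbb D^-_{\mathbb Q_p},\mathcal I_Z)=0\), hence surjectivity onto \(\mathcal O(Z)\), and your Chinese remainder and inverse-limit computation spells out the identification \(\mathcal O(Z)=\prod_j L_j\) that the paper states in one line; the aside about base changing to \(\Cp\) and averaging over Galois conjugates is best dropped, since no finite averaging is available there and the direct argument over \(\mathbb Q_p\) already suffices.
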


\begin{proof}
Let $\mathcal I_Z$ be the coherent ideal sheaf of $Z$.
Apply Kiehl's Theorem~B as in the proof of \Cref{lem:drinfeld-interpolation}. The vanishing of $H^1(\mathbb D^-_{\mathbb Q_p},\mathcal I_Z)$ makes the restriction map to $\mathcal O(Z)$ surjective. Since $Z=\coprod_j\operatorname{Sp}L_j$, we have $\mathcal O(Z)=\prod_{j\geq1}L_j$.
\end{proof}

\begin{lemma}
\label{lem:qp-disc-unboundedness}
Let
\[
h(T)=\sum_{m\geq0}c_mT^m \in\mathbb Q_p[[T]]
\]
converge for every $|T|<1$. Suppose that $h\neq0$ and that there is a sequence $t_j\in\Cp$ with $|t_j|<1$, $|t_j|\to1$, and \(h(t_j)\to0.\)
Then $h$ is unbounded on the open unit disc.
\end{lemma}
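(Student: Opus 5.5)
We argue by contradiction: assume $h$ is bounded on the open unit disc, say $|h(x)|\leq M$ for all $|x|<1$, and deduce that $h$ vanishes identically. The tools are the Gauss norm and the maximum modulus principle on circles.

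\emph{Step 1: boundedness controls the coefficients.} For $0<r<1$ with $r\in|\Cp^\times|$, the supremum of $|h|$ on the closed disc $|x|\leq r$ equals $\max_m|c_m|r^m$, by the maximum modulus principle for one-variable power series. Hence $|c_m|r^m\leq M$ for every $m$ and every such $r$. Letting $r\to1$ gives $|c_m|\leq M$ for all $m$. So $h$ lies in $\mathbb Z_p[[T]]\otimes\mathbb Q_p$, and after scaling by a power of $p$ we may assume $h\in\mathbb Z_p[[T]]$ with $\max_m|c_m|=1$. This normalization is possible because $h\neq0$.

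\emph{Step 2: Weierstrass preparation.} Since some coefficient of $h$ is a unit, the $p$-adic Weierstrass preparation theorem in $\mathbb Z_p[[T]]$ gives
\[
h=P\cdot U,
\]
where $P$ is a distinguished polynomial of degree $N$ (the smallest index with $|c_N|=1$) and $U\in\mathbb Z_p[[T]]^\times$.

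\emph{Step 3: estimate at the points $t_j$.} First, $|U(x)|=|U(0)|=1$ for all $|x|<1$, since $U(0)\in\mathbb Z_p^\times$ and the remaining terms have absolute value less than $1$. Second, write $P(T)=T^N+\sum_{k<N}a_kT^k$ with $|a_k|<1$. Since $P$ has only finitely many coefficients, there is $\rho<1$ with $|a_k|\leq\rho$ for all $k<N$. Whenever $|x|>\rho$, the leading term dominates: $|x^N|>\rho\geq\rho|x|^k\geq|a_kx^k|$, so $|P(x)|=|x|^N$ by the ultrametric inequality. Consequently, for all large $j$,
\[
|h(t_j)|=|t_j|^N\to1.
\]
This contradicts $h(t_j)\to0$, so $h$ must be unbounded.

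The main obstacle is Step 1, namely justifying the sup-norm equals Gauss-norm identity on a disc of radius $r<1$. If $r$ is not in the value group, one approximates $r$ by radii that are. Alternatively, Steps 1–2 can be bypassed by a direct Newton polygon argument: bounded coefficients give a maximal coefficient index $N$, and then for $|x|$ close to $1$ the term $c_Nx^N$ dominates all others, because only finitely many $|c_m|$ come close to $\sup_m|c_m|$ when $h$ has bounded coefficients over $\mathbb Q_p$ (the values $|c_m|$ lie in the discrete group $p^{\mathbb Z}$). This yields $|h(t_j)|\to|c_N|\neq0$ directly.
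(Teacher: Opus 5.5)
Your main argument (Steps 1--3) is correct, but it reaches the conclusion through a different tool than the paper. The paper never factors $h$. After the Cauchy estimate and the discreteness of $|\mathbb Q_p^\times|$ show that $\sup_m|c_m|$ is attained, it takes $e$ to be the \emph{first} index attaining it. It then observes directly that for $|t|<1$ close to $1$ the term $c_et^e$ strictly dominates: terms with $m>e$ satisfy $|c_mt^m|\le|c_e||t|^{e+1}$, and terms with $m<e$ satisfy $|c_m|\le p^{-1}|c_e|$. Hence $|h(t)|=|c_e||t|^e$.

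You instead normalize $h$ into $\mathbb Z_p[[T]]$ with a unit coefficient and apply Weierstrass preparation. You then read off $|U|\equiv1$ on the open disc and $|P(x)|=|x|^N$ near the boundary. This gives the same identity $|h(t)|=|t|^N$ (in your normalization). As a bonus it locates all zeros of $h$ in the open disc as the $N$ roots of $P$. But it brings in a structure theorem where a one-line ultrametric estimate suffices, and the dominance you verify for $P$ in Step~3 is the same estimate the paper applies to $h$ itself. Both routes use discreteness of the value group to make the coefficient supremum attained. In your version this is exactly what produces a unit coefficient, so that Weierstrass preparation applies.

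Two small points. First, in Step~3 the condition $|x|>\rho$ does not give $|x|^N>\rho$ when $N\ge2$; the correct threshold is $|x|>\rho^{1/N}$. This is harmless, since $|t_j|^N\to1$.

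Second, the justification of your alternative sketch is wrong as stated. Infinitely many $|c_m|$ can equal $\sup_m|c_m|$ (e.g.\ $h=\sum_mT^m$), so there need not be only finitely many near-maximal coefficients. What makes that argument work is choosing $N$ as the first index attaining the supremum and using $|x|<1$ to dominate all later terms. That is precisely the paper's proof.
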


\begin{proof}
Suppose that $h$ is bounded. The nonarchimedean Cauchy estimate on closed subdiscs shows that the valuations of the coefficients $c_m$ are bounded below. Since the value group of $\mathbb Q_p$ is discrete, $\sup_m|c_m|$ is attained. Let $e$ be the first index at which it is attained. For $|t|<1$ sufficiently close to $1$, the term $c_et^e$ is then the unique term of maximal absolute value, and hence \(|h(t)|=|c_e|\,|t|^e.\) 
This cannot tend to zero as $|t|\to1$, contradicting $h(t_j)\to0$.
\end{proof}

\begin{proof}[Proof of \Cref{thm:qp-disc-dense-orbit}]
The field \(\Cp\) has a countable dense subset contained in \(\overline{\mathbb Q}_p\).
Choose a sequence \((a_j,b_j)_{j\geq1}\) in \((\overline{\mathbb Q}_p^\times)^2\) that is dense in \(\Cp^2\).
Apply \Cref{lem:qp-locally-finite-points} to this sequence to obtain \(L_j\) and \(t_j\).
Choose strictly increasing integers $N_j$ such that \(|p|^{N_j}\max\{1,|a_j|,|b_j|\}\leq |p|^j.\)
This is possible because $|p|^N\to0$ as $N\to\infty$.  In particular, \(p^{N_j}a_j\rightarrow0\) and \(p^{N_j}b_j\rightarrow0.\)
By \Cref{lem:qp-kiehl-evaluation} the two elements \((p^{N_j}a_j)_j,\ (p^{N_j}b_j)_j\in\prod_jL_j\) lift to analytic functions $A,B\in\mathcal O(\mathbb D^-_{\mathbb Q_p})$. Since global analytic functions on the open disc have unique power series expansions over $\mathbb Q_p$ which converge for $|T|<1$,  $A,B$ have the form stated in \cref{thm:qp-disc-dense-orbit} and
\begin{equation}
 A(t_j)=p^{N_j}a_j, \quad  B(t_j)=p^{N_j}b_j.
\label{eq:qp-interpolation-values}
\end{equation}
Define \(f(T)=[A(T):B(T):1]\).
Let
\[
 g_j=\operatorname{diag}(p^{-N_j},p^{-N_j},1) \in\PGL_3(\mathbb Q_p).
\]
Equation \eqref{eq:qp-interpolation-values} gives \(g_jf(t_j)=[a_j:b_j:1].\)
These points are dense in the affine chart $\{Z\neq0\}\simeq \Cp^2$, and the chart itself is dense in $\mathbf P^2(\Cp)$. So this proves
\eqref{eq:qp-disc-dense-orbit}.

We have
\(A(t_j)=p^{N_j}a_j\rightarrow0\) and \(B(t_j)=p^{N_j}b_j\rightarrow0\) while $|t_j|\to1$. Since every $a_j$ and $b_j$ is nonzero, both $A$ and $B$ are nonzero. \Cref{lem:qp-disc-unboundedness} shows that $A$ and $B$ are unbounded on the open disc.

A global function on $\operatorname{Spf}\mathbb Z_p[[T]]$ is an element of $\mathbb Z_p[[T]]$ and has absolute value at most one on $|T|<1$.
Thus if $p^N A$ belonged to $\mathbb Z_p[[T]]$ for any integer $N$, then $A$ would be bounded by $|p|^{-N}$ on the open disc, contradiction.
The same holds for $B$. 
\end{proof}
   
\section{A perfectoid torus with orbit closure of nonempty interior}
\label{sec:perfectoid-torus-counterexample}

The following example has a quasi-compact one-dimensional domain, but the domain is perfectoid rather than a strict rigid space of finite type. 

Denote by $\Cp\langle T^{\pm1/p^\infty}\rangle$ the completion of the union of the finite-level algebras $\Cp\langle T^{\pm1/p^m}\rangle$ for the Gauss norm.

\begin{theorem}
\label{thm:perfectoid-torus-counterexample}
There exists a connected quasi-compact affinoid perfectoid space
\[
 P=\operatorname{Spa}
 \bigl(\Cp\langle T^{\pm1/p^\infty}\rangle,
       \Cp\langle T^{\pm1/p^\infty}\rangle^\circ\bigr)
 \sim\varprojlim_m
 \operatorname{Spa}\Cp\langle T_m^{\pm1}\rangle,
 \quad T_m=T^{1/p^m},
\]
whose underlying spectral topological space has Krull dimension one and
whose finite-level transition maps are finite \'etale, together with an
analytic morphism
\(g:P\to\mathbf P^2_{\Cp}\)
such that
\(\overline{\PGL_3(\mathbb Q_p)g(P(\Cp))}^{\,\mathrm{cl}}\)
contains a nonempty open subset of $\mathbf P^2(\Cp)$.
\end{theorem}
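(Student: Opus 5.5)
We will take \(g=[G_1:G_2:1]\) with \(G_1,G_2\in R:=\Cp\langle T^{\pm1/p^\infty}\rangle\), defined by
\[
 G_1=\sum_{m\geq1}p^{m}\,T^{c_m/p^{m}},\qquad G_2=\sum_{m\geq1}p^{m}\,T^{d_m/p^{m}},
\]
where \(c_m,d_m\) are integers prime to \(p\) to be chosen (possibly with extra \(\mathbb Z_p\)-coefficients or lower-order correction terms). These series converge in the Gauss norm, so \(G_1,G_2\in R\). Hence \(g\) is a morphism to the affine chart \(\{X_2\neq0\}\subset\mathbf P^2_{\Cp}\). The structural statements about \(P\) are standard: it is affinoid perfectoid and is the tilde-limit of the tori \(\operatorname{Spa}\Cp\langle T_m^{\pm1}\rangle\). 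The transition maps \(T_m\mapsto T_{m+1}^p\) are finite \'etale because \(p\) is a unit on the torus. The space is connected because the idempotents of \(R\) come from idempotents at finite level, and those tori are connected. The Krull dimension is one because the space is homeomorphic to the inverse limit of one-dimensional adic tori with finite transition maps. I would record these facts briefly and spend the effort on the orbit.

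A point of \(P(\Cp)\) is a compatible system \((\zeta_m)\) of \(p\)-power roots of some \(t\in\Cp\) with \(|t|=1\). The idea is to choose, for each target \((v,w)\) in a countable dense subset of \(\{|v|=|w|=1\}\), a level \(m\) and a point at which the \(m\)-th terms dominate. We want \(G_i(\text{pt})=p^m(\text{target})+O(p^{m+1})\). Then the diagonal element \(g_m=\operatorname{diag}(p^{-m},p^{-m},1)\in\PGL_3(\mathbb Q_p)\) gives
\[
 g_m\,g(\text{pt})=[v+O(p):w+O(p):1].
\]
This is too coarse, so the plan needs sharpening. We use infinitely many levels for a single target: the terms with index \(\geq m'\), for \(m'\gg m\), should contribute only \(O(p^{m'-m})\) after rescaling. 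The terms with index \(<m\) must be killed.

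To kill the lower terms, we exploit the freedom in choosing the compatible roots. Changing \((\zeta_k)\) by \(p\)-power roots of unity changes \(T^{c_k/p^k}\) by a \(p^k\)-th root of unity. The key point is that we may instead choose the base \(t\) and the roots so that the finite sum
\[
 \sum_{k<m}p^kT^{c_k/p^k}
\]
takes a prescribed value. A cleaner device is to replace the coefficient \(p^m\) by differences. Take
\[
 G_i=\sum_m p^{m}\bigl(T^{c_m/p^m}-T^{c_m'/p^m}\bigr),
\]
and evaluate at points where \(T_k\) is close to a root of unity making the lower differences vanish. Such points exist, since the lower-level coordinates are determined by \(\zeta_m\) via \(\zeta_k=\zeta_m^{p^{m-k}}\).

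Concretely, I would first evaluate at \(\zeta_m=\mu\cdot(1+x)\), where \(\mu\) is a \(p^{m}\)-th root of unity and \(|x|\) is small. This forces the lower-level terms to be close to constants. These constants we subtract using a translation in \(\PGL_3(\mathbb Q_p)\); this is possible if they lie in \(\mathbb Q_p\), or else we arrange them to vanish by symmetric choices of \(c_k\). The level-\(m\) term then varies with \(\mu\) and \(x\). We use that \(\mu^{c_m}\) ranges over all \(p^m\)-th roots of unity, and that differences \(\mu^{c}-\mu^{c'}\) for two exponents fill out the residue disc pattern densely as \(m\) grows. Combined with the rescaling by \(p^{-m}\), this gives limit points \([v:w:1]\) approximating arbitrary unit-size \((v,w)\).

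The main obstacle is this last step. We must show that the rescaled level-\(m\) values realize, up to errors tending to zero, a dense subset of the two-parameter unit locus. The two coordinates must be controlled independently, which is why \(c_m\neq d_m\) is needed. The closure being taken in the classical topology means we only need approximations. I would first try choosing \((c_m,d_m)\) so that the map \(\zeta\mapsto(\zeta^{c_m},\zeta^{d_m})\), on \(\zeta\) in a small annulus, behaves like a two-dimensional analytic family after including the free parameter \(x\). Since a single \(\zeta_m\) gives only one parameter, I would let the target sequence vary with \(m\) and use different levels to hit different targets, exactly as in the interpolation proofs of \Cref{thm:drinfeld-nonqc-counterexample} and \Cref{thm:qp-disc-dense-orbit}. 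Density then comes from the countable union over levels rather than from a single level.
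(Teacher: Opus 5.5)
Your structural arguments for \(P\) (finite \'etaleness, connectedness via idempotents, dimension via the inverse limit) are fine, though they differ from the paper's tilting argument. The orbit-closure part, however, has a genuine gap: as described, the mechanism cannot produce an open set.

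\textbf{No target data in the functions.} With coefficients \(p^m\) (or \(\mathbb Z_p\)-multiples), after \(\operatorname{diag}(p^{-m},p^{-m},1)\) the dominant level-\(m\) contribution is \((\zeta^{c_m},\zeta^{d_m})\). Here \(\zeta=\zeta_m\) is the only free parameter, so this value lies on the image of a one-parameter subgroup of the torus. Divide by Teichm\"uller representatives and apply \(\log\), which is a local isometry on \(1+\mathfrak m_{\Cp}\). The image then becomes the line \(\Cp\cdot(c_m,d_m)\) of rational slope. Scalars in \(\Qp^\times\) only translate it by vectors in \(\Qp^2\). Hence limit points of your rescaled dominant terms lie, in these local coordinates, in the closed set \(\{(X,Y):1,X,Y\ \text{linearly dependent over}\ \Qp\}\), which has empty interior in \(\Cp^2\). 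Using different levels for different targets works in the interpolation proofs only because the targets are the interpolated values. Here too the targets must be written into the functions, and the ring allows arbitrary \(\Cp\)-coefficients. This is the paper's key step. It fixes a dense sequence \((v_j,w_j)\) in \((\mathcal O_{\Cp}^\times)^2\) and sets \(h_1=\sum_jp^{A_j}w_jS_{k_j}\), \(h_2=\sum_jp^{2A_j}v_jS_{k_j}^2\) with \(S_k=T_k-1\). It then evaluates at a point \(x_j\) with the fixed value \(T_{k_j}(x_j)=1+p^c\), so the evaluation point carries no target information.

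\textbf{No separation of levels.} This fails when every level is present with coefficient \(p^k\). At \(T_m=1+x\) with \(v(x)>1/(p-1)\), one has \(v(T_k-1)=v(x)+m-k\) for \(k\leq m\). So your lower difference term \(p^k(T_k^{c_k}-T_k^{c_k'})\) has valuation \(m+v(x)+v(c_k-c_k')\), the same order as the level-\(m\) term. The gain from \(T_k=T_m^{p^{m-k}}\) is exactly cancelled by the loss from \(p^k\) versus \(p^m\). Without differences, the lower terms \(p^kT_k^{c_k}\) dominate outright. Their deviations from the constants \(p^k\), which you would remove by a \(\Qp\)-translation, are again of the order of the level-\(m\) variation. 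The roots-of-unity variant is inconsistent. Near a primitive \(p^m\)-th root of unity, killing all lower differences needs \(p^k\mid c_k-c_k'\) for \(k<m\), while keeping the level-\(m\) term needs \(p^m\nmid c_m-c_m'\). This cannot hold for every \(m\).

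The paper instead uses sparse levels \(k_j=2j^2\) with coefficients \(p^{k_j/2}\). At \(x_j\), a lower term has valuation exceeding that of the main term by \((k_j-k_l)/2\), and a higher one by at least \((k_l-k_j)/2-c\). Since \(k_{j+1}-k_j\to\infty\), both error terms tend to zero (\Cref{lem:perfectoid-torus-points,prop:perfectoid-torus-estimates}).
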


We construct the coordinate functions as convergent series using successively deeper levels of the tower. We choose points and diagonal transformations so that one term of each series gives the prescribed value and the remaining terms give an error tending to zero; see \Cref{prop:perfectoid-torus-estimates}.
By choosing the prescribed values to be dense in an open set and with every tail also dense, we obtain an orbit closure containing that open set.

\subsection{The perfectoid torus}
For the completed algebra \(\Cp\langle T^{\pm1/p^\infty}\rangle\) and its adic spectrum, write \(\Cp\langle T^{\pm1/p^\infty}\rangle^+=\Cp\langle T^{\pm1/p^\infty}\rangle^\circ,\) 
\[P=\operatorname{Spa}(\Cp\langle T^{\pm1/p^\infty}\rangle,\Cp\langle T^{\pm1/p^\infty}\rangle^+),\]
and define the finite-level unit torus $P_m$ for each $m\geq0$ by
\[
 P_m=
 \operatorname{Spa}
 \bigl(\Cp\langle T_m^{\pm1}\rangle,
       \Cp\langle T_m^{\pm1}\rangle^\circ\bigr),
 \quad
 T_m=T_{m+1}^p.
\]
\begin{lemma}
\label{lem:perfectoid-torus-tower}
The space $P$ is a quasi-compact affinoid perfectoid space, every map
$P_{m+1}\to P_m$ is finite \'etale, and
\(P\sim\varprojlim_mP_m\)
in the sense of Scholze--Weinstein
\cite[Definition~2.4.1]{ScholzeWeinstein}.
\end{lemma}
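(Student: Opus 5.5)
The plan is to verify the three assertions of \Cref{lem:perfectoid-torus-tower} separately: perfectoidness of the completed algebra, finite \'etaleness of the transition maps, and the tilde-limit property. Write \(R_m=\Cp\langle T_m^{\pm1}\rangle\) and \(R_\infty\) for the completion of \(\bigcup_m R_m\). By construction \(R_\infty\) is the ring of power series \(\sum_{\alpha\in\mathbb Z[1/p]}c_\alpha T^\alpha\) with \(c_\alpha\to0\). Its subring \(R_\infty^\circ\) is the analogous ring with \(c_\alpha\in\mathcal O_{\Cp}\), since the Gauss norm is multiplicative and therefore equals the spectral norm. This ring is \(p\)-adically complete and bounded, so \(R_\infty\) is uniform. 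For the Frobenius condition we use the element \(\varpi=p^{1/p}\in\mathcal O_{\Cp}\). Then \(\varpi^p\mid p\), and \(\Phi:R_\infty^\circ/\varpi\to R_\infty^\circ/\varpi^p\), \(x\mapsto x^p\), is surjective. Indeed, a general element of \(R_\infty^\circ/\varpi^p\) is a finite sum \(\sum \bar c_\alpha T^\alpha\). Each \(c_\alpha\) has a \(p\)-th root in \(\mathcal O_{\Cp}\), and each \(T^\alpha\) is the \(p\)-th power of \(T^{\alpha/p}\), so additivity of Frobenius modulo \(p\) gives a preimage. Hence \((R_\infty,R_\infty^\circ)\) is a perfectoid affinoid \(\Cp\)-algebra, and \(P\) is an affinoid perfectoid space. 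Quasi-compactness is automatic for an affinoid adic spectrum. Connectedness, which the theorem also asserts, follows because \(R_\infty\) has no nontrivial idempotents: it is an integral domain, by multiplicativity of the Gauss norm.

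Next we treat the transition maps. The map \(R_m\to R_{m+1}\) is \(R_{m+1}=R_m[X]/(X^p-T_m)\), which is free of rank \(p\) with basis \(1,T_{m+1},\dots,T_{m+1}^{p-1}\). The discriminant of \(X^p-T_m\) is \(\pm p^pT_m^{p-1}\), a unit in \(R_m\), because \(p\) is invertible in \(\Cp\) and \(T_m\) is a unit. So the map is finite \'etale. The plus-rings match, since \(R_{m+1}^\circ\) is the integral closure of \(R_m^\circ\) in \(R_{m+1}\). Therefore \(P_{m+1}\to P_m\) is the finite \'etale morphism of affinoid adic spaces attached to this algebra map.

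Finally we check \(P\sim\varprojlim_mP_m\) against \cite[Definition~2.4.1]{ScholzeWeinstein}. This requires compatible maps \(P\to P_m\) such that \(|P|\to\varprojlim|P_m|\) is a homeomorphism, together with an affinoid cover along which \(\varinjlim\mathcal O(P_m)\to\mathcal O(P)\) has dense image. The density is immediate: \(\bigcup_mR_m\) is dense in \(R_\infty\) by definition. For the homeomorphism, the cleanest route is the standard criterion (Scholze--Weinstein, Proposition~2.4.2 or its affinoid version). It says that if \(\operatorname{Spa}(R_\infty,R_\infty^+)\) has \(R_\infty^+\) equal to the completion of \(\varinjlim R_m^+\) and \(R_\infty\) is the completion of \(\varinjlim R_m\), then the adic spectrum is the tilde-limit. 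If I instead argue directly, I would identify continuous valuations on \(R_\infty\) with compatible systems of continuous valuations on the \(R_m\), using density, and identify rational subsets with preimages of rational subsets at finite level. This is the step I expect to need the most care. One must confirm that the completed colimit of the \(R_m^\circ\) is the full \(R_\infty^\circ\) rather than only its \(p\)-adic completion up to almost-elements. That holds here because of the explicit power-series description above.

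The Krull dimension claim in the theorem then also follows from the homeomorphism. Each \(P_m\) is a one-dimensional affinoid, and the fibers of \(|P_{m+1}|\to|P_m|\) are finite, so the limit has dimension one. Alternatively one can compare with the tilt, which is a one-dimensional perfectoid torus over \(\Cp^\flat\).
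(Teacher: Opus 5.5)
Your argument is correct and has the same three-part structure as the paper. The finite \'etale step agrees with the paper's; your check that $R_{m+1}^\circ$ is the integral closure of $R_m^\circ$ fills in a point the paper leaves implicit. The other two steps take somewhat different routes.

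\textbf{Perfectoidness.} The paper does no computation here. It cites Scholze's example that $\Cp\langle T^{1/p^\infty}\rangle$ is perfectoid, identifies $P$ with the rational subset $\{|T|\geq1\}$ of the perfectoid disc, and applies \cite[Theorem~6.3(ii)]{ScholzePerfectoid}. You instead verify the definition by hand. Multiplicativity of the Gauss norm identifies $R_\infty^\circ$ with the unit ball, which gives uniformity. Frobenius is surjective mod $p$ because you can take $p$-th roots of the coefficients and of the monomials $T^\alpha$. Your route is self-contained, and it produces the explicit description of $R_\infty^\circ$ that both proofs need anyway in the tilde-limit step. The paper's route is shorter but rests on two citations.

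\textbf{Tilde-limit.} As the paper uses it, \cite[Proposition~2.4.2]{ScholzeWeinstein} gives $\operatorname{Spa}(A_\infty,A_\infty^+)\sim\varprojlim_m P_m$ for the \emph{uncompleted} colimit pair. The paper then reaches $P$ through preperfectoidness and \cite[Proposition~2.4.4]{ScholzeWeinstein}. You paraphrase 2.4.2 as a statement about the completed pair. That implicitly uses Huber's fact that $\operatorname{Spa}(\widehat A,\widehat A^+)\to\operatorname{Spa}(A,A^+)$ is a homeomorphism matching rational subsets. Combined with the density of $\bigcup_m R_m$ in $R_\infty$, this verifies Definition~2.4.1 directly and legitimately bypasses the preperfectoid step. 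You should state that fact explicitly rather than fold it into the citation.

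Your connectedness and dimension remarks belong to the next lemma, not this one.
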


\begin{proof}
By the example following Theorem~1.7 in
\cite{ScholzePerfectoid}, the algebra \(\Cp\langle T^{1/p^\infty}\rangle\) is perfectoid. Consider the rational subset \(\{x: 1=|1(x)|\leq|T(x)|\}\) in its adic spectrum.
Since $|T|\leq1$ on the closed perfectoid disc, this is the locus $|T|=1$.  Its coordinate algebra is \(\Cp\langle T^{\pm1/p^\infty}\rangle\), and it is perfectoid by \cite[Theorem~6.3(ii)]{ScholzePerfectoid}. Thus $P$ is affinoid perfectoid and in particular quasi-compact.

The map $P_{m+1}\to P_m$ is induced by
\[
 \Cp\langle T_m^{\pm1}\rangle \to \Cp\langle T_{m+1}^{\pm1}\rangle, \quad T_m\mapsto T_{m+1}^p.
\]
Equivalently we have
\[
 \Cp\langle T_{m+1}^{\pm1}\rangle  \simeq \Cp\langle T_m^{\pm1}\rangle\langle X\rangle/(X^p-T_m).
\]
The algebra on the right is finite free over $\Cp\langle T_m^{\pm1}\rangle$ with basis \(\{1,X,\ldots,X^{p-1}\}\). 
The derivative of $X^p-T_m$ is $pX^{p-1}$. Since both $p$ and $X=T_{m+1}$ are units in $\Cp\langle T_{m+1}^{\pm1}\rangle$, $pX^{p-1}$ is a unit. Thus $P_{m+1}\to P_m$ is finite \'etale.

Let
\[
 A_{\infty,0} =\varinjlim_m\Cp^\circ\langle T_m^{\pm1}\rangle, \quad  A_\infty=A_{\infty,0}[1/p], \quad A_\infty^+=A_{\infty,0}.
\]
Give $A_\infty$ the f-adic topology for which $A_{\infty,0}$ is an open ring of definition and $(p)$ is an ideal of definition.  The rings $\Cp^\circ\langle T_m^{\pm1}\rangle$ and the common ideals $(p)$ satisfy the hypotheses of \cite[Proposition~2.4.2]{ScholzeWeinstein}.  Therefore \(\operatorname{Spa}(A_\infty,A_\infty^+)\sim\varprojlim_mP_m.\)

The closure of $A_{\infty,0}$ for the Gauss norm on $\Cp\langle T^{\pm1/p^\infty}\rangle$ is $\Cp\langle T^{\pm1/p^\infty}\rangle^\circ$.
Consequently, the strong completion of $(A_\infty,A_\infty^+)$ is $(\Cp\langle T^{\pm1/p^\infty}\rangle,\Cp\langle T^{\pm1/p^\infty}\rangle^+)$. 

Since $(\Cp\langle T^{\pm1/p^\infty}\rangle,\Cp\langle T^{\pm1/p^\infty}\rangle^+)$ is perfectoid, $\operatorname{Spa}(A_\infty,A_\infty^+)$ is preperfectoid in the sense of \cite[Definition~2.3.4]{ScholzeWeinstein}.  By Proposition~2.4.4 of \cite{ScholzeWeinstein}, \(P\sim\varprojlim_mP_m.\)
\end{proof}

\begin{lemma}
\label{lem:perfectoid-torus-topology}
The space $P$ is connected, and the Krull dimension of its underlying spectral topological space equals one.
\end{lemma}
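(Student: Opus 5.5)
The plan is to transfer both assertions from the finite levels $P_m$ to $P$ through the homeomorphism $|P|\cong\varprojlim_m|P_m|$ supplied by \Cref{lem:perfectoid-torus-tower} and the definition of $\sim$ in \cite[Definition~2.4.1]{ScholzeWeinstein}. Each $P_m$ is the adic space of the strict affinoid unit circle $\{|T_m|=1\}$ over $\Cp$. This is a smooth connected one-dimensional affinoid: its reduction $\operatorname{Spec}\Fpbar[T^{\pm1}]$ is connected, so $\Cp\langle T_m^{\pm1}\rangle$ has no nontrivial idempotents. As a spectral space, $|P_m|$ has Krull dimension one. The rank-one points are the generic points, and every specialization chain has length at most one; the only nontrivial chains are a type II point specializing to a rank-two point.

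\textbf{Connectedness.} I would argue algebraically rather than through the limit of spaces. A clopen decomposition of the affinoid $P$ corresponds to an idempotent $e\in\Cp\langle T^{\pm1/p^\infty}\rangle$, because rational coverings split the structure sheaf and $H^0(P,\mathcal O_P)$ is the algebra itself (Tate acyclicity for perfectoid affinoids, \cite[Theorem~6.3]{ScholzePerfectoid}). Every element is a Gauss-norm-convergent series $\sum_{r\in\mathbb Z[1/p]}c_rT^r$ with $c_r\to0$, so its image in the reduction $\Fpbar[T^{\pm1/p^\infty}]$ is well defined. If $e$ is idempotent with $e\neq0,1$, rescaling shows $|e|=1$, and the reduction $\bar e$ is an idempotent of $\Fpbar[T^{\pm1/p^\infty}]$. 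That ring is a domain, so $\bar e\in\{0,1\}$. Suppose $\bar e=0$. Then $|e|<1$, and $e=e^2$ gives $|e|\le|e|^2$, which forces $e=0$. The case $\bar e=1$ is handled the same way by passing to $1-e$. Hence $P$ is connected. As an alternative route, one could note that $|P|$ is an inverse limit of connected compact spaces along surjective maps, since finite étale covers are surjective.

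\textbf{Krull dimension.} Specialization in $\varprojlim|P_m|$ is computed componentwise, because every projection is a spectral map and the limit topology is the product-induced one. A chain $x_0\succ x_1\succ x_2$ of distinct points in $P$ therefore projects to chains in each $P_m$. For $m$ large the projections separate the three points, so we obtain a chain of length two in some $P_m$, which contradicts $\dim P_m=1$. This shows $\dim P\le1$.

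For the lower bound, the Gauss point is compatible along the tower: the Gauss norm on $\Cp\langle T^{\pm1/p^\infty}\rangle$ restricts to the Gauss norm at each level. It has a rank-two specialization in $P$, obtained by composing with a valuation on the residue field $\Fpbar(T^{1/p^\infty})$ centered at $T^{1/p^\infty}=1$, for instance the $(T-1)$-adic one extended to the perfection. This gives a chain of length one, so $\dim P=1$.

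The step I expect to be most delicate is making the componentwise-specialization and point-separation claims rigorous. This requires that $|P|\to\varprojlim|P_m|$ be a homeomorphism and not merely a bijection, which is exactly what the relation $\sim$ guarantees.
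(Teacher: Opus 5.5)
Your proof is correct, but it takes a different route from the paper's.

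\textbf{The paper's route.} The paper passes to the tilt, using $|P|\cong|P^\flat|$ from \cite[Theorem~6.3(i)]{ScholzePerfectoid}. It then removes the perfection, using $|P^\flat|\cong|Q|$ with $Q=\operatorname{Spa}(\Cp^\flat\langle T^{\pm1}\rangle,\Cp^{\flat\circ}\langle T^{\pm1}\rangle)$ from \cite[Proposition~6.11(i)]{ScholzePerfectoid}. This reduces both assertions to a single classical affinoid. Connectedness holds because its algebra is a domain, and dimension one follows from \cite[Lemma~1.8.6(ii)]{HuberEtale}.

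\textbf{Your route.} You stay in characteristic zero throughout.
\begin{itemize}
\item For connectedness you run the idempotent argument directly on $\Cp\langle T^{\pm1/p^\infty}\rangle$. You use perfectoid sheafiness to turn a clopen decomposition into an idempotent, and the fact that the reduction $\Fpbar[T^{\pm1/p^\infty}]$ is a domain. This works. It can be shortened: the Gauss norm is multiplicative, so the algebra is itself a domain and $e(1-e)=0$ finishes at once. Also, your phrase ``rescaling shows $|e|=1$'' should read $|e|=|e^2|=|e|^2$, by power-multiplicativity.
\item For the upper bound $\dim P\le1$ you use the homeomorphism $|P|\cong\varprojlim_m|P_m|$, which is already part of $P\sim\varprojlim_mP_m$ in \Cref{lem:perfectoid-torus-tower}. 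Closures of points in a subspace of a product are computed factorwise. Distinct points of the limit stay distinct at all sufficiently deep levels. Hence a chain of length two in $P$ would give one in some $P_m$, which is impossible.
\item For the lower bound $\dim P\ge1$ you compose the Gauss point with the $(T-1)$-adic valuation on $\Fpbar(T^{1/p^\infty})$. This valuation is nonnegative on $T^{\pm1/p^\infty}$, so the composite point lies in $\operatorname{Spa}(R,R^\circ)$ and is a rank-two specialization.
\end{itemize}

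\textbf{Comparison.} The paper's route is shorter and handles both claims uniformly on one Noetherian space, but it rests on the tilting and perfection-invariance homeomorphisms. Yours needs only the tower relation from the preceding lemma, perfectoid sheafiness, and elementary valuation theory. Your upper-bound argument also works verbatim for any perfectoid $X\sim\varprojlim X_i$ whose finite levels all have dimension at most one. Your side remark about inverse limits of connected compact spaces would need more care, since spectral spaces are not Hausdorff, but your main argument does not depend on it.
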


\begin{proof}
The tilt of $P$ is
\[
 P^\flat=
 \operatorname{Spa}
 \bigl(\Cp^\flat\langle T^{\pm1/p^\infty}\rangle,
       \Cp^{\flat\circ}\langle T^{\pm1/p^\infty}\rangle\bigr).
\] Here $\Cp^\flat$ denotes the tilt of $\Cp$.
By \cite[Theorem~6.3(i)]{ScholzePerfectoid}, the underlying topological spaces of $P$ and $P^\flat$ are homeomorphic.

In characteristic $p$, the algebra \(\Cp^\flat\langle T^{\pm1/p^\infty}\rangle\) is the completed perfection of \(\Cp^\flat\langle T^{\pm1}\rangle.\)
By \cite[Proposition~6.11(i)]{ScholzePerfectoid}, their adic spectra are homeomorphic. Hence $P$ is homeomorphic to
\[
 Q=
 \operatorname{Spa}
 \bigl(\Cp^\flat\langle T^{\pm1}\rangle,
       \Cp^{\flat\circ}\langle T^{\pm1}\rangle\bigr).
\]

The ring $\Cp^\flat\langle T^{\pm1}\rangle$ is a domain and therefore has no nontrivial idempotent.  A decomposition of $Q$ into two nonempty clopen subsets gives a nontrivial idempotent in its ring of global functions.  Thus $Q$ is connected and so $P$ is connected.

The affinoid algebra $\Cp^\flat\langle T^{\pm1}\rangle$ has Krull dimension one.  By \cite[Lemma~1.8.6(ii)]{HuberEtale}, the underlying spectral topological space of $Q$ also has Krull dimension one. The homeomorphisms from $P$ to $P^\flat$ and from $P^\flat$ to $Q$ therefore give the asserted dimension of $P$.
\end{proof}

\subsection{Points and approximation estimates}
We also write \(T_m\) for the pullback of the coordinate on \(P_m\) along \(P\to P_m\) and define \(S_m:=T_m-1\in\mathcal O(P)\).

\begin{lemma}
\label{lem:perfectoid-torus-points}
Let \(0\leq k_1<k_2<\cdots\) be integers, and let $c$ be an integer satisfying \(c>\frac1{p-1}.\)
For every $j$, there is a point $x_j\in P(\Cp)$ such that \(T_{k_j}(x_j)=1+p^c\) and these points satisfy
\begin{align}
 v(S_{k_l}(x_j))
   &=c+k_j-k_l &&l<j,
 \label{eq:perfectoid-torus-earlier-valuation}\\
 v(S_{k_l}(x_j))
   &\geq0 && l>j.
 \label{eq:perfectoid-torus-later-valuation}
\end{align}
\end{lemma}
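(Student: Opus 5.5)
A point $x\in P(\Cp)$ is the same as a compatible system of $p$-power roots: a sequence $(\tau_m)_{m\geq0}$ in $\Cp$ with $|\tau_m|=1$ and $\tau_{m+1}^p=\tau_m$, with $T_m(x)=\tau_m$. This holds because $P\sim\varprojlim_m P_m$ by \Cref{lem:perfectoid-torus-tower}, and a continuous homomorphism $\Cp\langle T^{\pm1/p^\infty}\rangle\to\Cp$ is determined by the images of the $T_m$. So the plan is to construct, for each $j$, such a compatible system. At level $k_j$ we put $\tau_{k_j}=1+p^c$. Levels below $k_j$ are then forced to be $\tau_{k_l}=(1+p^c)^{p^{k_j-k_l}}$. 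Levels above $k_j$ are chosen freely as successive $p$-th roots. Any $p$-th root of a unit is a unit, so the $\tau_m$ with $m>k_j$ all have absolute value $1$.

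With this construction, \eqref{eq:perfectoid-torus-later-valuation} is immediate. For $l>j$ we have $|\tau_{k_l}|=1$, so $|S_{k_l}(x_j)|=|\tau_{k_l}-1|\leq1$, that is, $v(S_{k_l}(x_j))\geq0$.

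The substance is \eqref{eq:perfectoid-torus-earlier-valuation}. It reduces to the standard claim that if $v(y)=a>\frac1{p-1}$, then $v\bigl((1+y)^p-1\bigr)=a+1$. To prove the claim, expand
\[
(1+y)^p-1=py+\sum_{i=2}^{p-1}\binom pi y^i+y^p .
\]
The first term has valuation $1+a$. Each middle term has valuation at least $1+2a>1+a$. The last term has valuation $pa$, and $pa>1+a$ exactly because $a>\frac1{p-1}$. Hence $py$ is the unique dominant term.

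Now induct starting from $v(\tau_{k_j}-1)=c>\frac1{p-1}$. Applying the claim $k_j-k_l$ times gives
\[
v\bigl(\tau_{k_l}-1\bigr)=c+(k_j-k_l).
\]
At each step the valuation only increases, so the hypothesis $a>\frac1{p-1}$ is preserved throughout the induction. This is \eqref{eq:perfectoid-torus-earlier-valuation} for $l<j$.

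The main point to handle carefully is the identification of $P(\Cp)$ with compatible root systems, which rests on the inverse-limit description of \Cref{lem:perfectoid-torus-tower}. Concretely, the evaluation $T^{1/p^m}\mapsto\tau_m$ defines a homomorphism on $A_{\infty,0}$ that is bounded by the Gauss norm, since all $|\tau_m|=1$. It therefore extends by continuity to the completion and respects the integral subrings, which gives the point $x_j$. The valuation computation itself is elementary. The integrality of $c$ is not needed beyond $c>\frac1{p-1}$, though it ensures $1+p^c$ is a well-defined element of $\Cp$.
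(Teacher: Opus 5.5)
Your proof is correct and follows the paper's argument. It uses the same compatible system $t_m=(1+p^c)^{p^{k_j-m}}$ for $m\le k_j$ with arbitrary successive $p$-th roots above, extends it to $P$ by Gauss-norm continuity, and treats $l>j$ in the same one line. The only difference is the valuation count for $l<j$. You iterate the binomial estimate $v\bigl((1+y)^p-1\bigr)=v(y)+1$ for $v(y)>\frac1{p-1}$. The paper instead writes $(1+p^c)^{p^{k_j-k_l}}=\exp\bigl(p^{k_j-k_l}\log(1+p^c)\bigr)$ and uses $v(\log(1+p^c))=c$ and $v(\exp(z)-1)=v(z)$ for $v(z)>\frac1{p-1}$. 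Your version is slightly more self-contained because it avoids checking convergence and the $\exp\circ\log$ identity.
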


\begin{proof}
Fix \(j\geq1\). Let \(t_{k_j}=1+p^c\) and \(t_m=(1+p^c)^{p^{k_j-m}}\) for \(m<k_j\).
For \(m>k_j\), choose \(t_m\in\Cp\) recursively so that \(t_m^p=t_{m-1}\). 
All \(t_m\) have absolute value \(1\), so the assignments \(T_m\mapsto t_m\) define compatible evaluations on the finite-level algebras, which is  bounded for the Gauss norm.
Therefore they extend continuously to \(\Cp\langle T^{\pm1/p^\infty}\rangle\) and define a point \(x_j\in P(\Cp)\) with \(T_{k_j}(x_j)=1+p^c\).

Suppose that $l<j$.  Then
\[
 T_{k_l}(x_j)
   =(1+p^c)^{p^{k_j-k_l}}
   =\exp\bigl(p^{k_j-k_l}\log(1+p^c)\bigr).
\]
The assumption $c>1/(p-1)$ gives \(v(\log(1+p^c))=c\) and \(v(\exp(z)-1)=v(z)\) when \(v(z)>\frac1{p-1}.\)
It follows that
\[
 v(T_{k_l}(x_j)-1)=c+k_j-k_l.
\]

If $l>j$, then $|T_{k_l}(x_j)|=1$. The ultrametric inequality gives \(|T_{k_l}(x_j)-1|\leq 1\) which proves \eqref{eq:perfectoid-torus-later-valuation}.
\end{proof}

\begin{proposition}
\label{prop:perfectoid-torus-estimates}
Assume that the $k_j$ are even and that \(k_{j+1}-k_j\to+\infty.\)
Let \(A_j=\frac{k_j}{2}\) and \(n_j=A_j+c.\)
Let \((v_j,w_j)\in(\mathcal O_{\Cp}^{\times})^2\) be any sequence.  Define \(h_1=\sum_{j\geq1}p^{A_j}w_jS_{k_j},\) and \(h_2=\sum_{j\geq1}p^{2A_j}v_jS_{k_j}^2\).
Then these series converge in $\Cp\langle T^{\pm1/p^\infty}\rangle$.  Moreover, there are $\varepsilon_{1,j},\varepsilon_{2,j}\in\Cp$ such that
\begin{align}
 p^{-n_j}h_1(x_j)
   &=w_j+\varepsilon_{1,j},
 & \varepsilon_{1,j}&\to0,
 \label{eq:perfectoid-torus-h2-limit}\\
 p^{-2n_j}h_2(x_j)
   &=v_j+\varepsilon_{2,j},
 & \varepsilon_{2,j}&\to0.
 \label{eq:perfectoid-torus-h1-limit}
\end{align}
\end{proposition}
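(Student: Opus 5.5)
Convergence comes first. Each $S_{k_j}=T_{k_j}-1$ has Gauss norm at most $1$ in $\Cp\langle T^{\pm1/p^\infty}\rangle$, since $T_{k_j}$ is a unit of norm one. Because $|w_j|=|v_j|=1$, the $j$-th terms of $h_1$ and $h_2$ have Gauss norm at most $|p|^{A_j}$ and $|p|^{2A_j}$. We have $A_j=k_j/2\to\infty$, and the algebra is complete, so both series converge. Evaluation at the $\Cp$-point $x_j$ is continuous and bounded by the Gauss norm, so $h_i(x_j)$ can be computed termwise.

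Fix $j$ and evaluate $p^{-n_j}h_1(x_j)=\sum_l p^{A_l-n_j}w_lS_{k_l}(x_j)$. I would split the sum into the cases $l=j$, $l<j$ and $l>j$, using \Cref{lem:perfectoid-torus-points}.
\begin{itemize}
\item For $l=j$ we have $S_{k_j}(x_j)=p^c$, so the term equals $p^{A_j+c-n_j}w_j=w_j$ exactly.
\item For $l<j$, \eqref{eq:perfectoid-torus-earlier-valuation} gives the valuation
\[
A_l-n_j+c+k_j-k_l=\tfrac{k_j-k_l}{2}\;\geq\;\tfrac{k_j-k_{j-1}}{2}.
\]
This tends to $+\infty$ uniformly in $l<j$, because $k_{j+1}-k_j\to\infty$.
\item For $l>j$, \eqref{eq:perfectoid-torus-later-valuation} gives valuation at least
\[
A_l-A_j-c=\tfrac{k_l-k_j}{2}-c\;\geq\;\tfrac{k_{j+1}-k_j}{2}-c,
\]
which also tends to infinity.
\end{itemize}
Set $\varepsilon_{1,j}=\sum_{l\neq j}p^{A_l-n_j}w_lS_{k_l}(x_j)$. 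By the ultrametric inequality, $v(\varepsilon_{1,j})$ is at least the minimum of the two bounds above, and this minimum tends to $\infty$.

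The argument for $h_2$ is the same with every valuation doubled. The $l=j$ term is $p^{2A_j+2c-2n_j}v_j=v_j$. For $l<j$ the valuation is $2A_l-2n_j+2(c+k_j-k_l)=k_j-k_l$. For $l>j$ it is at least $k_l-k_j-2c$.

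The one delicate point is the $l>j$ tail. Estimate \eqref{eq:perfectoid-torus-later-valuation} only says $v(S_{k_l}(x_j))\geq0$, so the later terms are controlled by the prefactor $p^{A_l-A_j}$ alone. The constant loss $c$ from $n_j$ is then absorbed only because the gaps $k_{j+1}-k_j$ grow. This is exactly where the hypothesis $k_{j+1}-k_j\to+\infty$ is needed, and it would fail for, say, $k_j=2j$. The earlier terms also need this hypothesis, so that their minimum valuation $(k_j-k_{j-1})/2$ diverges. The evenness of $k_j$ only makes $A_j$ an integer, so that $p^{A_j}$ and $p^{n_j}$ are honest integer powers.
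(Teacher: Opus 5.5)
Your proof is correct and follows essentially the same route as the paper. You use the same Gauss-norm convergence bound and the same split into $l=j$, $l<j$ and $l>j$, with the identical estimates $\tfrac{k_j-k_{j-1}}{2}$ and $\tfrac{k_{j+1}-k_j}{2}-c$ (doubled for $h_2$), concluding by the ultrametric inequality. You also make explicit that evaluation at $x_j$ is continuous for the Gauss norm, which justifies computing $h_i(x_j)$ termwise; the paper leaves this point implicit.
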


\begin{proof}
Since $T_m$ and $T_m^{-1}$ are power-bounded we have \(\lVert S_m\rVert\leq1.\)
The coefficients $p^{A_j}$ and $p^{2A_j}$ tend to zero.  Therefore \(h_1\) and \(h_2\) converge in the complete Banach algebra $\Cp\langle T^{\pm1/p^\infty}\rangle$.

At $x_j$, the $j$-th summand of $h_1$ is
\[
 p^{A_j}w_jS_{k_j}(x_j)
   =p^{A_j+c}w_j
   =p^{n_j}w_j.
\]
Define
\[
 \varepsilon_{1,j}
   =
   p^{-n_j}
   \sum_{l\neq j}p^{A_l}w_lS_{k_l}(x_j).
\]
For $l<j$, equation
\eqref{eq:perfectoid-torus-earlier-valuation} gives
\begin{align*}
 v(p^{A_l}w_lS_{k_l}(x_j))-n_j
 =A_l+c+k_j-k_l-(A_j+c)
 =\frac{k_j-k_l}{2}
 \geq\frac{k_j-k_{j-1}}2.
\end{align*}
For $l>j$, equation
\eqref{eq:perfectoid-torus-later-valuation} gives
\begin{align*}
 v(p^{A_l}w_lS_{k_l}(x_j))-n_j
 \geq A_l-A_j-c
 =\frac{k_l-k_j}{2}-c
 \geq\frac{k_{j+1}-k_j}{2}-c.
\end{align*}
For $j\geq2$, the ultrametric inequality therefore gives
\[
 v(\varepsilon_{1,j})
 \geq
 \min\left\{
       \frac{k_j-k_{j-1}}2,\,
       \frac{k_{j+1}-k_j}2-c
     \right\}.
\]
The right hand side tends to $+\infty$, so $\varepsilon_{1,j}\to0$. This proves \eqref{eq:perfectoid-torus-h2-limit}.

At $x_j$, the $j$-th summand of $h_2$ is
\[
 p^{2A_j}v_jS_{k_j}(x_j)^2
   =p^{2A_j+2c}v_j
   =p^{2n_j}v_j.
\]
Define
\[
 \varepsilon_{2,j}
   =
   p^{-2n_j}
   \sum_{l\neq j}p^{2A_l}v_lS_{k_l}(x_j)^2.
\]
Since $v_l$ and $w_l$ are units, the valuation of each normalized summand in $\varepsilon_{2,j}$ is twice that of the corresponding summand in $\varepsilon_{1,j}$. The preceding estimates and the ultrametric inequality therefore give
\[
 v(\varepsilon_{2,j}) \geq \min\left\{k_j-k_{j-1},\, k_{j+1}-k_j-2c \right\}
\]  for $j\geq2$.
This lower bound tends to $+\infty$, so $\varepsilon_{2,j}\to0$. This proves \eqref{eq:perfectoid-torus-h1-limit}.
\end{proof}

\subsection{Proof of \Cref{thm:perfectoid-torus-counterexample}}
\label{subsec:perfectoid-orbit-closure}

\begin{proof}
The space $P$ satisfies all the asserted properties by \Cref{lem:perfectoid-torus-tower,lem:perfectoid-torus-topology}.

Choose \(k_j=2j^2,\) and fix an integer $c>1/(p-1)$. The metric space $(\mathcal O_{\Cp}^{\times})^2$ is separable and has no isolated points. So we can choose a dense sequence \((v_j,w_j)\in(\mathcal O_{\Cp}^{\times})^2\).
Every tail of this sequence is dense, so
\begin{equation}
 \overline{\{(v_j,w_j):j\geq N\}}^{\,\mathrm{cl}}
      =(\mathcal O_{\Cp}^{\times})^2
 \quad\text{for every }N\geq1.
 \label{eq:perfectoid-torus-dense-tails}
\end{equation}
Apply \Cref{lem:perfectoid-torus-points,prop:perfectoid-torus-estimates}.
Since the last homogeneous coordinate is $1$, \(g=[h_2:h_1:1]\) defines an analytic morphism \(g:P\to\mathbf P^2_{\Cp}.\)

Since $n_j=A_j+c\in\mathbb Z$, we can take
\[
 g_j= \operatorname{diag}(p^{-2n_j},p^{-n_j},1)
 \in\PGL_3(\mathbb Q_p).
\]
Equations \eqref{eq:perfectoid-torus-h2-limit} and \eqref{eq:perfectoid-torus-h1-limit} give
\[
 g_j g(x_j) =[v_j+\varepsilon_{2,j}: w_j+\varepsilon_{1,j}:1]. 
\]

Let
\[
 \mathcal U=
 \{[v:w:1]\in\mathbf P^2(\Cp):|v|=|w|=1\}.
\]
Fix $[v:w:1]\in\mathcal U$ and a classical neighborhood \(W\) of this point.  Choose $\rho>0$ such that
\[
 \{[v':w':1]:
       |v'-v|<\rho,\ |w'-w|<\rho\}
 \subset W.
\]
Choose \(N\) such that
\(|\varepsilon_{1,j}|<\rho,\) \(|\varepsilon_{2,j}|<\rho\) for \(j\geq N\).
By \eqref{eq:perfectoid-torus-dense-tails}, there is a \(j\geq N\) such that \(|v_j-v|<\rho\) and \(|w_j-w|<\rho.\)
The ultrametric inequality then gives \(g_j g(x_j)\in W.\) Thus \(\mathcal U\subset \overline{\PGL_3(\mathbb Q_p)g(P(\Cp))}^{\,\mathrm{cl}}.\)
Since $\mathcal U$ is nonempty and open, the theorem follows.
\end{proof}


\end{document}